\documentclass[11pt,twoside,reqno]{amsart}

\usepackage{amssymb, latexsym}
\usepackage[italian,english]{babel}
\usepackage{amsmath,amsfonts,amsthm}
\usepackage{paralist}
\usepackage[top=3.00cm, bottom=3.00cm, left=2.70cm, right=2.70cm]{geometry}

\usepackage{color}


\numberwithin{equation}{section}

\usepackage{xspace}
\usepackage[bookmarksnumbered,colorlinks]{hyperref}
\usepackage{graphics}
\def\bb#1\eb{\textcolor{blue}
{#1}} %
\def\br#1\er{\textcolor{red}
{#1}} %
\def\bv#1\ev{\textcolor{green}
{#1}} %
\def\bc#1\ec{\textcolor{cyan}
{#1}} %

\usepackage{graphics}



\def\Xint#1{\mathchoice
  {\XXint\displaystyle\textstyle{#1}}%
  {\XXint\textstyle\scriptstyle{#1}}%
  {\XXint\scriptstyle\scriptscriptstyle{#1}}%
  {\XXint\scriptscriptstyle\scriptscriptstyle{#1}}%
  \!\int}
\def\XXint#1#2#3{{\setbox0=\hbox{$#1{#2#3}{\int}$}
  \vcenter{\hbox{$#2#3$}}\kern-.5\wd0}}
\def\-int{\Xint -}

\newcommand{\R}{\mathbb{R}}
\newcommand{\N}{\mathcal{N}}
\newcommand{\h}{H^{s}(\R^{3})}

\DeclareMathOperator{\supp}{supp}
\DeclareMathOperator{\e}{\varepsilon}

\newtheorem{prop}{Proposition}[section]
\newtheorem{lem}{Lemma}[section]
\newtheorem{thm}{Theorem}[section]

\newtheorem{cor}{Corollary}[section]

\newtheorem{remark}{Remark}[section]

\title[Concentration for fractional Schr\"odinger-Kirchhoff equation]{Concentration phenomena for a fractional Schr\"odinger-Kirchhoff type equation}

\author[V. Ambrosio]{Vincenzo Ambrosio}
\address{Vincenzo Ambrosio\hfill\break\indent 
Dipartimento di Scienze Pure e Applicate (DiSPeA),\hfill\break\indent
Universit\`a degli Studi di Urbino `Carlo Bo'\hfill\break\indent
Piazza della Repubblica, 13\hfill\break\indent
61029 Urbino (Pesaro e Urbino, Italy)}
\email{vincenzo.ambrosio@uniurb.it}

\author[T. Isernia]{Teresa Isernia}
\address{Teresa Isernia\hfill\break\indent
Dipartimento di Ingegneria Industriale e Scienze Matematiche \hfill\break\indent
Universit\`a Politecnica delle Marche\hfill\break\indent
Via Brecce Bianche, 1\hfill\break\indent
60131 Ancona (Italy)}
\email{teresa.isernia@unina.it}

\keywords{Fractional Schr\"odinger-Kirchhoff problem; Variational Methods; Moser iteration; Nehari manifold; Ljusternik-Schnirelmann theory}
\subjclass[2010]{47G20, 35R11, 35A15, 58E05}

\date{}

\begin{document}

\begin{abstract}
In this paper we deal with the multiplicity and concentration of positive solutions for the following fractional Schr\"odinger-Kirchhoff type equation
\begin{equation*}
M\left(\frac{1}{\e^{3-2s}} \iint_{\R^{6}}\frac{|u(x)- u(y)|^{2}}{|x-y|^{3+2s}} dxdy + \frac{1}{\e^{3}} \int_{\R^{3}} V(x)u^{2} dx\right)[\e^{2s} (-\Delta)^{s}u+ V(x)u]= f(u) \, \mbox{ in } \R^{3} 
\end{equation*}
where $\e>0$ is a small parameter, $s\in (\frac{3}{4}, 1)$, $(-\Delta)^{s}$ is the fractional Laplacian, $M$ is a Kirchhoff function, $V$ is a continuous positive potential and $f$ is a superlinear continuous function with subcritical growth.
By using penalization techniques and Ljusternik-Schnirelmann theory, we investigate the relation between the number of positive solutions with the topology of the set where the potential attains its minimum.
\end{abstract}
\maketitle

\section{Introduction}

\noindent
In this paper we are interested in the multiplicity and concentration of positive solutions for the following fractional Schr\"odinger-Kirchhoff type problem
\begin{equation}\label{P}
M\left(\frac{1}{\e^{3-2s}} \iint_{\R^{6}}\frac{|u(x)- u(y)|^{2}}{|x-y|^{3+2s}} dxdy + \frac{1}{\e^{3}} \int_{\R^{3}} V(x)u^{2} dx\right) \Bigl[\e^{2s} (-\Delta)^{s}u+ V(x)u\Bigr]= f(u) \, \mbox{ in } \R^{3} 
\end{equation}
where $\e>0$ is a small parameter, $s\in (\frac{3}{4}, 1)$, $(-\Delta)^{s}$ denotes the usual fractional Laplacian operator, $M: [0, +\infty)\rightarrow [0, +\infty)$ is the Kirchhoff term, $V:\R^{3}\rightarrow \R$ and $f:\R\rightarrow \R$ are continuous functions satisfying suitable assumptions.  \\
When $\e=1$ and $V(x)\equiv 0$, we obtain the fractional stationary Kirchhoff equation 
\begin{equation}\label{fKe}
M\left( \iint_{\R^{6}}\frac{|u(x)- u(y)|^{2}}{|x-y|^{3+2s}} dxdy \right)\, (-\Delta)^{s}u= f(u) \, \mbox{ in } \R^{3} 
\end{equation}
which has been introduced for the first time by Fiscella \& Valdinoci in \cite{FV} (in the case of bounded domains) and extensively 
studied in the last years by many authors; see for instance \cite{AI, AFP, FMS, MV, PuSa, PXZ} and references therein for several existence and multiplicity results in any dimension, in the whole space and in bounded domains.  \\
We recall that the local counterpart of \eqref{fKe} is related to the famous Kirchhoff equation
\begin{equation}\label{Ke}
\rho \, u_{tt} - \left( \frac{P_{0}}{h}+ \frac{E}{2L}\int_{0}^{L} |u_{x}|^{2} dx \right) u_{xx} =0
\end{equation}
introduced by Kirchhoff \cite{K} in $1883$ as a nonlinear extension of D'Alembert' s wave equation for free vibrations of elastic strings. Here $u=u(x, t)$ is the transverse string displacement at the space coordinate $x$ ant time $t$, $L$ is the length of the string, $h$ is the area of the cross section, $E$ is Young's modulus of the material, $\rho$ is the mass density, and $P_{0}$ is the initial tension.  \\
The early investigations dedicated to the Kirchhoff equation \eqref{Ke} were given by Bernstein \cite{B} and Pohozaev \cite{P}. Anyway, Kirchhoff equation \eqref{Ke} began to call attention of several researchers only after the work of Lions \cite{Lions}, where a functional analysis approach was introduced to attack it. For more details on classical Kirchhoff problems, we refer to \cite{ACF, AP, APS, CKW, Fsantos, PZ, WTXZ}.  \\
In a recent paper \cite{FV}, Fiscella \& Valdinoci have proposed an interesting physical interpretation of Kirchhoff equation in the fractional scenario. In their correction of the early (one-dimensional) model, the tension on the string, which classically has a ``nonlocal" nature arising from the average of the kinetic energy $\frac{|u_{x}|^{2}}{2}$ on $[0, L]$, possesses a further nonlocal behavior provided by the $H^{s}$-norm (or other more general fractional norms) of the function $u$.\\
On the other hand, when $M=1$, \eqref{P} becomes the time dependent fractional Schr\"odinger equation 
\begin{equation}\label{fSe}
\e^{2s} (-\Delta)^{s}u+ V(x)u= f(u) \, \mbox{ in } \R^{3} 
\end{equation} 
which plays a fundamental role in fractional quantum mechanic; see \cite{DDPDV, DDPW, Laskin1, Laskin2} for a physical interpretation. 
Equation \eqref{fSe} can be seen as the fractional analogue of the celebrated Schr\"odinger equation
\begin{equation}\label{Se}
-\e^{2s} \Delta u+ V(x)u= f(x, u) \, \mbox{ in } \R^{N} 
\end{equation} 
which has been widely investigated in the last two decades. Since we cannot review the huge bibliography of \eqref{Se}, we just cite \cite{AMS, DF, FW, Rab, Willem} and references therein.  \\
In the last years, the concentration of positive solutions to \eqref{fSe} has attracted the attention of many mathematicians \cite{AM, A, A2, CZ, DDPW, FFV, HZ}. 
In particular, in \cite{AM} Alves \& Miyagaki used the penalization method to study the concentration phenomenon of positive solutions for fractional Schr\"odinger equation \eqref{fSe} when $V$ has a local minimum and $f$ is subcritical. He \& Zou \cite{HZ} investigated the relation between the number of positive solutions of \eqref{fSe} with $f(u)= g(u)+ u^{2^{*}_{s}-1}$, where $g$ is subcritical, 
and the topology of the set where the potential $V$ attains its minima. In \cite{A} the first author complemented the results in \cite{AM} and \cite{HZ} dealing with the multiplicity and concentration of solutions in the subcritical and supercritical cases. \\
Motivated by the above papers, in this work we focus our attention on the multiplicity and the concentration behavior of positive solutions to the fractional Schr\"odinger-Kirchhoff type problem \eqref{P}. To our knowledge, this type of investigation has not ever been done in fractional setting when $M$ is not constant. The aim of this paper is to fill this gap. \\
Before to state our result, we introduce the main assumptions. 
Along the paper we assume that $M:\R^{+}\rightarrow \R^{+}$ is a continuous function satisfying 
\begin{compactenum}[$(M_1)$]
\item there exists $m_{0}>0$ such that $M(t)\geq m_{0}$ for any $t\geq 0$; 
\item the function $t\mapsto M(t)$ is increasing; 
\item for each $t_{1}\geq t_{2}>0$ it holds
\begin{equation*}
\frac{M(t_1)}{t_1}-\frac{M(t_2)}{t_2}\leq m_{0} \left( \frac{1}{t_1}- \frac{1}{t_{2}}\right).  
\end{equation*}
\end{compactenum}
As a model for $M$, we can take $M(t)= m_{0}+ bt + \sum_{i=1}^{k} b_{i}t^{\gamma_{i}}$ with $b_{i}\geq 0$ and $\gamma_{i}\in (0, 1)$ for all $i\in \{1, \dots , k\}$. \\
On the potential $V:\R^{3}\rightarrow \R$, we suppose that $V\in C(\R^{3}, \R)$ and verifies the following hypotheses:
\begin{compactenum}[$(V_1)$]
\item there exists $V_{0}>0$ such that $\displaystyle{V_{0}:= \inf_{x\in \R^{3}} V(x)}$; 
\item for each $\delta>0$ there is a bounded and Lipschitz domain $\Omega\subset \R^{3}$ such that
\begin{equation*}
V_{0}< \min_{\partial \Omega} V, \quad \Lambda= \{x\in \Omega : V(x)= V_{0}\} \neq \emptyset 
\end{equation*}
and
\begin{equation*}
\Lambda_{\delta}= \{x \in \R^{3} : {\rm dist}(x, \Lambda)\leq \delta \}\subset \Omega. 
\end{equation*}
\end{compactenum}
Concerning the nonlinear term in \eqref{P}, we assume that $f: \R \rightarrow \R$ is a continuous function satisfying the following conditions: 
\begin{compactenum}[$(f_1)$]
\item $\displaystyle{\lim_{t\rightarrow 0^{+}}\frac{f(t)}{t^{3}}=0}$; 
\item there is $q\in (4, \frac{6}{3-2s})$ such that $\displaystyle{\lim_{t\rightarrow \infty} \frac{f(t)}{t^{q-1}}=0}$; 
\item there is $\vartheta \in (4, \frac{6}{3-2s})$ such that $0<\vartheta F(t)\leq f(t)t$ for any $t>0$; 
\item the function $\displaystyle{t\mapsto \frac{f(t)}{t^{3}}}$ is non-decreasing in $(0, \infty)$. 
\end{compactenum}
A typical example of $f$ is given by 
$$
f(t)=\sum_{i=1}^{k} a_{i} (t^{+})^{q_{i}-1}
$$
with $a_{i}\geq 0$ not all identically zero and $q_{i}\in [\vartheta, \frac{6}{3-2s})$ for all $i\in \{1, \dots, k\}$.\\
We note that the assumption $(f_4)$ implies that 
\begin{equation}\label{rem1}
t\mapsto \frac{1}{4}f(t)t- F(t)  \mbox{ is nondecreasing for any } t\geq 0.
\end{equation}
Since we are interested in positive solutions, we assume that $f$ vanishes in $(-\infty, 0)$. \\
Now, we are ready to state our main result.
\begin{thm}\label{thm1}
Let $s\in (\frac{3}{4}, 1)$ and assume that $(M_{1})$-$(M_{3})$, $(V_{1})$- $(V_{2})$ and $(f_{1})$-$(f_{4})$ hold true. Then, given $\delta>0$ there is $\bar{\e}= \bar{\e}(\delta)>0$ such that the problem \eqref{P} has at least $cat_{\Lambda_{\delta}}(\Lambda)$ positive solutions, for all $\e \in (0, \bar{\e})$. Moreover, if $u_{\e}$ denotes one of these positive solutions and $\eta_{\e}\in \R^{3}$ its global maximum, then
\begin{equation*}
\lim_{\e\rightarrow 0} V(\eta_{\e})=V_{0}. 
\end{equation*}
\end{thm}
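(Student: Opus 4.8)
The plan is to follow the by-now classical Ljusternik--Schnirelmann scheme for singularly perturbed problems, adapted to the nonlocal Kirchhoff setting. After the usual rescaling $x\mapsto \e x$, problem \eqref{P} becomes
\begin{equation*}
M\!\left(\|u\|_{\e}^{2}\right)\bigl[(-\Delta)^{s}u+V(\e x)u\bigr]=f(u)\quad\text{in }\R^{3},
\end{equation*}
where $\|u\|_{\e}^{2}=[u]_{s}^{2}+\int_{\R^{3}}V(\e x)u^{2}\,dx$, with associated energy functional $J_{\e}$ on the Hilbert space $H_{\e}$. To recover compactness I would employ the del Pino--Felmer penalization: modify $f$ outside the set $\Omega$ introduced in $(V_{2})$, obtaining a penalized nonlinearity $g$ and a penalized functional $\tilde J_{\e}$ whose critical points, once shown to be small outside $\Omega_{\e}=\Omega/\e$ via a Moser iteration argument, are genuine solutions of the original equation. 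The hypotheses $(M_{1})$--$(M_{3})$ are exactly what is needed for the penalized functional to have the mountain-pass geometry, for the $(PS)$ condition to hold at the relevant levels, and for the Nehari manifold $\mathcal N_{\e}$ to be well defined with $J_{\e}$ bounded below on it; in particular $(M_{3})$ is the convexity-type condition that makes $t\mapsto J_{\e}(tu)$ have a unique maximum and guarantees the Nehari manifold is a natural constraint.

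The core of the argument is a comparison between the min-max level $c_{\e}$ on $\mathcal N_{\e}$ and the level $c_{V_{0}}$ of the limiting autonomous problem
\begin{equation*}
M\!\left([u]_{s}^{2}+V_{0}\!\int_{\R^{3}}u^{2}\right)\bigl[(-\Delta)^{s}u+V_{0}u\bigr]=f(u)\quad\text{in }\R^{3}.
\end{equation*}
First I would establish, for the autonomous problem, existence of a ground state, its positivity, boundedness and decay (via the comparison with the kernel of $(-\Delta)^{s}+1$), and a characterization of $c_{V_{0}}$ through the Nehari manifold; the strict monotonicity of $a\mapsto c_{a}$ in the potential level follows from $(f_{4})$. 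Then I would build the two maps that drive the LS theory: (i) a map $\Phi_{\e}:\Lambda\to\mathcal N_{\e}$ obtained by truncating and rescaling a fixed ground state of the autonomous problem around points of $\Lambda$, with $\limsup_{\e\to0}J_{\e}(\Phi_{\e}(y))\le c_{V_{0}}$ uniformly in $y$; and (ii) a barycenter map $\beta_{\e}$ defined on a sublevel set $\mathcal N_{\e}\cap J_{\e}^{c_{V_{0}}+h(\e)}$ such that $\beta_{\e}\circ\Phi_{\e}$ is homotopic to the inclusion $\Lambda\hookrightarrow\Lambda_{\delta}$. The key intermediate fact here is that minimizing sequences (more precisely, $(PS)$ sequences) of $\tilde J_{\e}$ at levels close to $c_{V_{0}}$ concentrate, after translation, near $\Lambda$ — this is the concentration-compactness step and is proved by a splitting lemma for the nonlocal Gagliardo seminorm together with the strict inequality $c_{a}>c_{V_{0}}$ for $a>V_{0}$.

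Once these two maps are in place, the standard LS category inequality $\operatorname{cat}(\mathcal N_{\e}\cap J_{\e}^{c_{V_{0}}+h(\e)})\ge\operatorname{cat}_{\Lambda_{\delta}}(\Lambda)$ yields at least $\operatorname{cat}_{\Lambda_{\delta}}(\Lambda)$ critical points of $\tilde J_{\e}$ for $\e$ small; the Moser iteration estimate, uniform in $\e$, shows these solutions are below the penalization threshold outside $\Omega_{\e}$, hence solve the original problem, and a maximum principle gives positivity. Finally, undoing the rescaling and tracking the global maximum point $\eta_{\e}$: one shows the rescaled solutions $v_{\e}$ have maximum points $z_{\e}$ with $\e z_{\e}$ bounded and, up to subsequence, $\e z_{\e}\to x_{0}$ with $V(x_{0})=V_{0}$; since $\eta_{\e}=\e z_{\e}+o(1)$ this gives $V(\eta_{\e})\to V_{0}$. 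The main obstacle, and the place where the Kirchhoff term genuinely complicates matters, is the $(PS)$/concentration analysis: because $M$ appears as a factor multiplying the whole operator, weak convergence $v_{n}\rightharpoonup v$ does not immediately pass to the limit in $M(\|v_{n}\|_{\e}^{2})$, so one must separately control the sequence $\|v_{n}\|_{\e}^{2}\to t_{0}$ and verify $t_{0}=\|v\|^{2}$; here $(M_{2})$ (monotonicity) and especially $(M_{3})$ are used to rule out the ``vanishing of mass'' and to recover strong convergence, and getting the strict level comparison to survive the presence of $M$ is the delicate point.
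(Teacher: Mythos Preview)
Your outline matches the paper's strategy almost point for point: del Pino--Felmer penalization, the autonomous limit problem with ground state at level $c_{0}$, the maps $\Phi_{\e}:\Lambda\to\mathcal N_{\e}$ and $\beta_{\e}$ whose composition is homotopic to the inclusion $\Lambda\hookrightarrow\Lambda_{\delta}$, Ljusternik--Schnirelmann category, Moser iteration plus Bessel-kernel decay to kill the penalization, and finally the maximum-point tracking. Your identification of the delicate Kirchhoff step---passing from $\|v_{n}\|^{2}\to t_{0}$ to $t_{0}=\|v\|^{2}$ via $(M_{2})$--$(M_{3})$---is exactly what the paper does in the autonomous existence theorem.

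One technical point your outline glosses over, and which the paper treats carefully: since $M$ and $f$ are assumed merely \emph{continuous}, the Nehari manifold $\mathcal N_{\e}$ is not a $C^{1}$ submanifold and the standard ``natural constraint'' machinery does not apply directly. The paper circumvents this by working instead on the incomplete $C^{1,1}$-manifold $\mathbb S_{\e}^{+}$ (the unit sphere intersected with the open cone $\{|\mathrm{supp}(u^{+})\cap\Omega_{\e}|>0\}$) and using the Szulkin--Weth homeomorphism $m_{\e}:\mathbb S_{\e}^{+}\to\mathcal N_{\e}$ together with the reduced functional $\psi_{\e}=\mathcal J_{\e}\circ m_{\e}$; the $(PS)$ condition and the category argument are then run on $\mathbb S_{\e}^{+}$ rather than on $\mathcal N_{\e}$. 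This is not a flaw in your plan so much as a missing implementation detail, but without it your statement that ``the Nehari manifold is a natural constraint'' would not go through under the stated regularity hypotheses.
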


\noindent
We recall that if $Y$ is a given closed set of a topological space $X$, we denote by $cat_{Y}(Y)$ the Ljusternik-Schnirelmann category of $Y$ in $X$, that is the least number of closed and contractible sets in $X$ which cover $Y$; see \cite{Willem}. \\
The proof of Theorem \ref{thm1} relies on variational methods developed in classical framework in \cite{Fsantos}. 
Clearly, the presence of the fractional Laplacian makes our analysis more delicate and intriguing with respect to the one performed in local setting, and the recent results obtained in \cite{AM, FQT} to study fractional Schr\"odinger equations will have a fundamental role to overcome our difficulties. \\
In what follows, we give a sketch of the proof. The lack of informations on the behavior of $V$ at infinity suggest us to use the penalization method introduced by Del Pino \& Felmer \cite{DF}. Since $f$ and $M$ are only continuous, the Nehari manifold associated to the modified problem is not differentiable, so the well-known arguments on the Nehari manifold do not work in our setting. To circumvent this obstacle, we will use some abstract results due to Szulkin \& Weth in \cite{SW}. After a careful study of the autonomous problem associated to \eqref{P}, we deal with the multiplicity of solutions of the modified problem, by invoking the Ljusternik-Schnirelmann theory. Then, in order to prove that the solutions $u_{\e}$ of the truncated problem are also solutions to \eqref{P} when $\e>0$ is sufficiently small, we argue as in \cite{AM}, providing $L^{\infty}$ estimates for $u_{\e}$-adapting the Moser's iteration \cite{Moser} in nonlocal framework- and by using some useful properties of the Bessel kernels established in \cite{FQT}. 
We point out that the restriction $s\in (\frac{3}{4}, 1)$ is essential in our technical approach in order to guarantee the  embedding of the space $H^{s}(\R^{3})$ into the Lebesgue spaces $L^{r}(\R^{N})$ with $4\leq r< \frac{6}{3-2s}$ (see conditions $(f_1)$-$(f_3)$).\\
Finally, we would like to emphasize that Theorem \ref{thm1} corresponds to the nonlocal counterpart of Theorem $1.1$ in \cite{Fsantos}. 
As far as we know the results presented here are new in literature. 
\smallskip

\noindent
The plan of the paper is the following. In Section $2$ we give some useful results related to the fractional Sobolev spaces. In Section $3$ we truncate the nonlinearity and we show that the modified problem admits a positive solution. In Section $4$ we study the autonomous problem associated to \eqref{P}. In Section $5$, we introduce the barycenter map and its properties. This tool will be crucial to obtain a multiplicity result for the modified problem via the abstract category theory of Ljusternik-Schnirelmann.
The last Section is devoted to the proof of Theorem \ref{thm1}.

\section{Fractional Sobolev spaces}
\noindent
In this section we offer a rather sketchy review of the fractional Sobolev spaces and some useful results which will be used later.  For more details, we refer to \cite{BV, DPV, DMV, MBRS, SV1, SV2}.\\
Fix $s\in (0,1)$. The fractional Laplacian $(-\Delta)^{s}$ is a pseudo-differential operator defined via Fourier transform by 
$$
\mathcal{F}(-\Delta)^{s}u(k)=|k|^{2s} \mathcal{F}u(k) \quad (k\in \R^{N}),
$$
when $u: \R^{N}\rightarrow \R$ belongs to the Schwarz space of rapidly decaying $C^{\infty}$ functions in $\R^{N}$.
Equivalently, $(-\Delta)^{s}$ can be represented as
$$
(-\Delta)^{s}u(x)=-\frac{C_{N,s}}{2} \int_{\R^{N}} \frac{u(x+y)+u(x-y)-2u(x)}{|y|^{N+2s}} \,dy  \quad (x\in \R^{N})
$$
where $C_{N,s}$ is a dimensional constant depending only on $N$ and $s$; see \cite{DPV} for more details.

\noindent
Let us denote by $\mathcal{D}^{s, 2}(\R^{N})$ the completion of $C^{\infty}_{c}(\R^{N})$ with respect to the Gagliardo (semi) norm 
$$
[u]^{2}=\int_{\R^{N}} |(-\Delta)^{\frac{s}{2}} u|^{2} \, dx= \iint_{\R^{2N}} \frac{|u(x)-u(y)|^{2}}{|x-y|^{N+2s}}\, dxdy.
$$
Now, we define the fractional Sobolev space
$$
H^{s}(\R^{N})= \left\{u\in L^{2}(\R^{N}) : [u]<\infty \right \}
$$
endowed with the natural norm 
$$
\|u\|_{H^{s}(\R^{N})} = \sqrt{[u]^{2} + \int_{\R^{N}} |u|^{2} \,dx}.
$$

\noindent
We recall the following embeddings of the fractional Sobolev spaces into Lebesgue spaces.
\begin{thm}\cite{DPV}\label{Sembedding}
Let $s\in (0,1)$ and $N>2s$. Then there exists a sharp constant $S_{*}=S(N, s)>0$
such that for any $u\in \mathcal{D}^{s, 2}(\R^{N})$
\begin{equation*}
\|u\|^{2}_{L^{2^{*}_{s}}(\R^{N})} \leq S_{*} [u]^{2}. 
\end{equation*}
Moreover $H^{s}(\R^{N})$ is continuously embedded in $L^{p}(\R^{N})$ for any $p\in [2, 2^{*}_{s}]$ and compactly in $L^{p}_{loc}(\R^{N})$ for any $p\in [1, 2^{*}_{s})$. 
\end{thm}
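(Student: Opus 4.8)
The plan is to prove the three assertions in turn: the fractional Gagliardo--Nirenberg--Sobolev inequality on $\mathcal{D}^{s,2}(\R^{N})$, the chain of continuous embeddings $H^{s}(\R^{N})\hookrightarrow L^{p}(\R^{N})$ for $p\in[2,2^{*}_{s}]$ by interpolation, and the local compactness for $p\in[1,2^{*}_{s})$ by a Fr\'echet--Kolmogorov argument. For the inequality I would pass through the Riesz potential. By the very definition of $\mathcal{D}^{s,2}(\R^{N})$ and of $(-\Delta)^{s/2}$ via the Fourier transform, given $u\in\mathcal{D}^{s,2}(\R^{N})$ the function $g:=(-\Delta)^{s/2}u$ lies in $L^{2}(\R^{N})$ with $\|g\|_{L^{2}(\R^{N})}=[u]$, and $u=\mathcal{I}_{s}g$, where $\mathcal{I}_{s}$ is the Riesz potential of order $s$ (convolution with a fixed multiple of $|x|^{s-N}$); this identity is checked first on $C^{\infty}_{c}(\R^{N})$ on the Fourier side and then extended by density. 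Since $N>2s$, the exponents $2$ and $2^{*}_{s}$ satisfy $\tfrac{1}{2^{*}_{s}}=\tfrac12-\tfrac{s}{N}$, so the Hardy--Littlewood--Sobolev inequality gives $\|u\|_{L^{2^{*}_{s}}(\R^{N})}=\|\mathcal{I}_{s}g\|_{L^{2^{*}_{s}}(\R^{N})}\le C(N,s)\,\|g\|_{L^{2}(\R^{N})}=C(N,s)\,[u]$, i.e.\ the stated bound with $S_{*}=C(N,s)^{2}$. (Alternatively one may avoid Fourier analysis entirely and estimate directly the Gagliardo double integral by truncating $u$ at dyadic levels and summing elementary bounds on the measures of its superlevel sets, as in \cite{DPV}; this gives a non-sharp constant, which already suffices for the embedding.) That $S_{*}$ is actually optimal, with extremals the dilations and translations of $x\mapsto(1+|x|^{2})^{-(N-2s)/2}$, is the delicate point and rests on the sharp form of Hardy--Littlewood--Sobolev (symmetric decreasing rearrangement and Lieb's classification of extremals); I would just quote it.

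The continuous embedding into $L^{2}(\R^{N})$ is immediate since $\|u\|_{L^{2}(\R^{N})}\le\|u\|_{H^{s}(\R^{N})}$, and the one into $L^{2^{*}_{s}}(\R^{N})$ follows from the inequality just established together with $[u]\le\|u\|_{H^{s}(\R^{N})}$. For $p\in(2,2^{*}_{s})$ I would write $\tfrac1p=\tfrac{1-\theta}{2}+\tfrac{\theta}{2^{*}_{s}}$ with $\theta\in(0,1)$ and combine the elementary interpolation inequality $\|u\|_{L^{p}(\R^{N})}\le\|u\|_{L^{2}(\R^{N})}^{1-\theta}\,\|u\|_{L^{2^{*}_{s}}(\R^{N})}^{\theta}$ with the two previous estimates to get $\|u\|_{L^{p}(\R^{N})}\le C\|u\|_{H^{s}(\R^{N})}$.

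For the compact embedding, let $(u_{n})$ be bounded in $H^{s}(\R^{N})$ and fix $R>0$; I claim some subsequence converges in $L^{p}(B_{R})$ for every $p\in[1,2^{*}_{s})$. The crucial ingredient is a uniform modulus of continuity under translations: working on the Fourier side and using $|e^{ik\cdot h}-1|^{2}\le C|k|^{2s}|h|^{2s}$ one gets $\|u_{n}(\cdot+h)-u_{n}\|_{L^{2}(\R^{N})}^{2}\le C|h|^{2s}[u_{n}]^{2}\le C'|h|^{2s}$ for all $n$ and all $h\in\R^{N}$. Since moreover $\|u_{n}(\cdot+h)-u_{n}\|_{L^{2^{*}_{s}}(\R^{N})}\le 2\|u_{n}\|_{L^{2^{*}_{s}}(\R^{N})}\le C''$ by the Sobolev inequality, interpolation yields, for every $p\in[2,2^{*}_{s})$, that $\sup_{n}\|u_{n}(\cdot+h)-u_{n}\|_{L^{p}(\R^{N})}\to0$ as $|h|\to0$; together with the uniform bound $\sup_{n}\|u_{n}\|_{L^{p}(B_{R+1})}<\infty$ coming from the continuous embedding, the Fr\'echet--Kolmogorov compactness criterion on $B_{R}$ produces a subsequence converging in $L^{p}(B_{R})$ for every such $p$. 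For $p\in[1,2)$ one reduces to $p=2$ by H\"older on the bounded set $B_{R}$, $\|u_{n}-u_{m}\|_{L^{p}(B_{R})}\le|B_{R}|^{\frac1p-\frac12}\|u_{n}-u_{m}\|_{L^{2}(B_{R})}$. A diagonal argument over an increasing sequence of balls exhausting $\R^{N}$ then gives convergence in $L^{p}_{loc}(\R^{N})$.

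The main obstacle is the compactness statement: the two delicate points are the uniform-in-$n$ equicontinuity of translations in $L^{p}$ — obtained by interpolating the $L^{2}$ modulus-of-continuity estimate against the uniform $L^{2^{*}_{s}}$ bound, which is precisely where the restriction $p<2^{*}_{s}$ enters and where compactness breaks down at the endpoint — and the correct bookkeeping when applying the Fr\'echet--Kolmogorov theorem on bounded domains (extending the relevant restrictions by zero and using that the translation estimate holds on all of $\R^{N}$). Establishing that $S_{*}$ is \emph{sharp} is, strictly speaking, a separate and more delicate issue, but it is classical and can be taken verbatim from the Hardy--Littlewood--Sobolev literature.
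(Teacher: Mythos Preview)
The paper does not prove this theorem at all: it is stated with a citation to \cite{DPV} and no argument is given, since it is a standard result from the literature on fractional Sobolev spaces. Your proposal therefore goes well beyond what the paper does, and what you outline is a correct and self-contained route to the three assertions: the Sobolev inequality via the Riesz potential representation $u=\mathcal{I}_{s}((-\Delta)^{s/2}u)$ together with Hardy--Littlewood--Sobolev, the intermediate $L^{p}$ embeddings by interpolation, and the local compactness by Fr\'echet--Kolmogorov using the Fourier-side translation estimate $\|u(\cdot+h)-u\|_{L^{2}}\le C|h|^{s}[u]$. This is essentially the strategy of \cite{DPV} itself (see Theorems~6.5, 6.7 and~7.1 there), so your argument is both correct and aligned with the source the paper is quoting; the only caveat is that the sharpness of $S_{*}$ is, as you rightly note, a separate and deeper fact that one simply cites.
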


\noindent
The following lemma is a version of the well-known concentration-compactness principle:
\begin{lem}\cite{Secchi}\label{Lions}
Let $N>2s$. If $\{u_{n}\}_{n\in \mathbb{N}}$ is a bounded sequence in $H^{s}(\R^{N})$ and if
$$
\lim_{n \rightarrow \infty} \sup_{y\in \R^{N}} \int_{B_{R}(y)} |u_{n}|^{2} dx=0
$$
where $R>0$,
then $u_{n}\rightarrow 0$ in $L^{r}(\R^{N})$ for all $r\in (2, 2^{*}_{s})$.
\end{lem}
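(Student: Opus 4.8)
The plan is to run the classical Lions vanishing scheme on a cube decomposition of $\R^{N}$: on each cube the subcritical Lebesgue norm is controlled by a product of the (uniformly small) local $L^{2}$-mass and the local $H^{s}$-energy, and summing turns the uniform smallness of the $L^{2}$-mass into global decay of $\|u_{n}\|_{L^{r}(\R^{N})}$. Throughout set $K:=\sup_{n}\|u_{n}\|_{H^{s}(\R^{N})}<\infty$.

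First I would establish a localized Gagliardo--Nirenberg--Sobolev inequality: for a unit cube $Q\subset\R^{N}$ and any $r\in(2,2^{*}_{s})$ there exist $\gamma=\gamma(N,s,r)\in[2,r)$ and $C=C(N,s,r)>0$ such that
\begin{equation*}
\|u\|_{L^{r}(Q)}^{r}\leq C\,\|u\|_{H^{s}(Q)}^{\gamma}\,\|u\|_{L^{2}(Q)}^{r-\gamma}\qquad\text{for all }u\in H^{s}(Q),
\end{equation*}
where $\|u\|_{H^{s}(Q)}^{2}=\|u\|_{L^{2}(Q)}^{2}+[u]_{H^{s}(Q)}^{2}$ and $[u]_{H^{s}(Q)}^{2}=\iint_{Q\times Q}|u(x)-u(y)|^{2}|x-y|^{-N-2s}\,dx\,dy$. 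This follows from the fractional Sobolev embedding on the bounded Lipschitz domain $Q$ (see \cite{DPV}), namely $\|u\|_{L^{2^{*}_{s}}(Q)}\leq C_{0}\|u\|_{H^{s}(Q)}$, followed by H\"older interpolation between $L^{2}(Q)$ and $L^{2^{*}_{s}}(Q)$, which gives $\|u\|_{L^{r}(Q)}\leq\|u\|_{L^{2}(Q)}^{1-\alpha}\|u\|_{L^{2^{*}_{s}}(Q)}^{\alpha}$ with $\alpha=\frac{N(r-2)}{2sr}\in(0,1)$ (here one uses $\frac12-\frac1{2^{*}_{s}}=\frac sN$). Since $\|u\|_{H^{s}(Q)}\geq\|u\|_{L^{2}(Q)}$, the map $\beta\mapsto\|u\|_{L^{2}(Q)}^{1-\beta}\|u\|_{H^{s}(Q)}^{\beta}$ is nondecreasing on $[0,1]$, so $\alpha$ may be replaced by $\beta:=\max(\alpha,2/r)\in[\alpha,1)$; raising to the power $r$ yields the inequality with $\gamma:=\beta r=\max\!\bigl(\tfrac{N(r-2)}{2s},2\bigr)$, and $\gamma<r$ holds precisely because $r<2^{*}_{s}=\tfrac{2N}{N-2s}$ (recall $N>2s$). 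If one prefers, the domain embedding can be avoided by composing with a uniformly bounded extension operator $H^{s}(Q)\to H^{s}(\R^{N})$ and invoking Theorem~\ref{Sembedding} directly.

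Next I would sum over a tiling $\R^{N}=\bigcup_{i}Q_{i}$ by unit cubes with pairwise disjoint interiors and centers $c_{i}$. Because the cubes are disjoint, a pair $(x,y)$ lying in a common cube belongs to exactly one of them, hence $\sum_{i}[u_{n}]_{H^{s}(Q_{i})}^{2}\leq[u_{n}]^{2}$ and therefore $\sum_{i}\|u_{n}\|_{H^{s}(Q_{i})}^{2}\leq\|u_{n}\|_{H^{s}(\R^{N})}^{2}\leq K^{2}$. Also $Q_{i}\subset B_{\sqrt{N}/2}(c_{i})$, and each such ball is covered by a fixed number $\kappa=\kappa(N,R)$ of balls of radius $R$, so that $\sigma_{n}:=\kappa\sup_{y\in\R^{N}}\int_{B_{R}(y)}|u_{n}|^{2}\,dx\to0$ and $\|u_{n}\|_{L^{2}(Q_{i})}^{2}\leq\sigma_{n}$ for every $i,n$. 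Applying the localized inequality on each $Q_{i}$, bounding $\|u_{n}\|_{L^{2}(Q_{i})}^{r-\gamma}=(\|u_{n}\|_{L^{2}(Q_{i})}^{2})^{(r-\gamma)/2}\leq\sigma_{n}^{(r-\gamma)/2}$ (using $r-\gamma>0$), and using $\gamma\geq2$ to estimate $\sum_{i}\|u_{n}\|_{H^{s}(Q_{i})}^{\gamma}=\sum_{i}(\|u_{n}\|_{H^{s}(Q_{i})}^{2})^{\gamma/2}\leq\bigl(\sup_{i}\|u_{n}\|_{H^{s}(Q_{i})}^{2}\bigr)^{\frac{\gamma}{2}-1}\sum_{i}\|u_{n}\|_{H^{s}(Q_{i})}^{2}\leq K^{\gamma}$, I obtain
\begin{equation*}
\int_{\R^{N}}|u_{n}|^{r}\,dx=\sum_{i}\int_{Q_{i}}|u_{n}|^{r}\,dx\leq C\,\sigma_{n}^{(r-\gamma)/2}\sum_{i}\|u_{n}\|_{H^{s}(Q_{i})}^{\gamma}\leq C\,K^{\gamma}\,\sigma_{n}^{(r-\gamma)/2},
\end{equation*}
and the right-hand side tends to $0$ as $n\to\infty$, since $\sigma_{n}\to0$ and $(r-\gamma)/2>0$; thus $u_{n}\to0$ in $L^{r}(\R^{N})$.

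The genuinely nonlocal --- and only delicate --- point is making the cube decomposition compatible with the Gagliardo seminorm: one must use the localized embedding $H^{s}(Q)\hookrightarrow L^{2^{*}_{s}}(Q)$ with the \emph{intrinsic} seminorm on the right (classical for Lipschitz domains, but not the whole-space statement of Theorem~\ref{Sembedding}), and exploit the easy but crucial fact that, the cubes being disjoint, the local seminorms add up to at most $[u]^{2}$ with no overlap constant lost. The rest --- local interpolation, extraction of the uniformly small $L^{2}$-factor, and the superadditivity bound $\sum_{i}a_{i}^{\gamma/2}\leq(\sum_{i}a_{i})^{\gamma/2}$ valid for $\gamma\geq2$ --- is routine.
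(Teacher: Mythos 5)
The paper does not prove this lemma at all: it is quoted verbatim from the cited reference \cite{Secchi}, so there is no in-paper argument to compare against. Your proof is correct and is essentially the classical Lions vanishing argument transplanted to the fractional setting, which is also what the cited source does; the only stylistic difference is that you tile $\R^{N}$ by disjoint unit cubes instead of using a covering by balls with finite overlap. The cube decomposition is a clean choice here, because the sets $Q_{i}\times Q_{i}$ are essentially disjoint in $\R^{2N}$, so the localized Gagliardo seminorms sum to at most $[u_{n}]^{2}$ with no overlap constant, while the uniform smallness of $\int_{Q_{i}}|u_{n}|^{2}\,dx$ is recovered from the hypothesis by covering each cube with a fixed number of balls of radius $R$. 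The two delicate points are both handled correctly: you invoke the embedding $H^{s}(Q)\hookrightarrow L^{2^{*}_{s}}(Q)$ with the intrinsic norm on a fixed Lipschitz domain (uniform over the $Q_{i}$ by translation invariance), and you raise the interpolation exponent to $\beta=\max(\alpha,2/r)$ — legitimate since $\|u\|_{H^{s}(Q)}\geq\|u\|_{L^{2}(Q)}$ — so that $\gamma=\beta r\in[2,r)$, which is exactly what makes the superadditivity bound $\sum_{i}a_{i}^{\gamma/2}\leq(\sum_{i}a_{i})^{\gamma/2}$ applicable and keeps the exponent $(r-\gamma)/2$ on $\sigma_{n}$ strictly positive. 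In short, a complete and self-contained proof of a statement the paper only cites.
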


\noindent
We also have the following useful result.
\begin{lem}\cite{PP}\label{pp}
Assume that $N>2s$ and $u\in \mathcal{D}^{s, 2}(\R^{N})$. Let $\varphi\in C^{\infty}_{c}(\R^{N})$ and for each $r>0$ we define $\varphi_{r}(x)=\varphi(x/r)$. Then, $\int_{\R^{N}} |(-\Delta)^{\frac{s}{2}}(u \varphi_{r})|^{2}\,dx \rightarrow 0$ as $r\rightarrow 0$.
If in addition $\varphi=1$ in a neighborhood of the origin, then
$\int_{\R^{N}} |(-\Delta)^{\frac{s}{2}}(u \varphi_{r})|^{2}\,dx \rightarrow \int_{\R^{N}} |(-\Delta)^{\frac{s}{2}}u|^{2}\,dx$ as $r\rightarrow \infty$.
\end{lem}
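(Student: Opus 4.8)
The plan is to deduce both limits from a single product‑type inequality, exploiting the exact scaling of the cutoff $\varphi_{r}$. Throughout I use the identity $\int_{\R^{N}}|(-\Delta)^{s/2}w|^{2}\,dx=[w]^{2}$ for $w\in\mathcal{D}^{s,2}(\R^{N})$, so the claims concern $[u\varphi_{r}]^{2}$. \emph{Step 1 (a product inequality and its weight).} For every $w\in\mathcal{D}^{s,2}(\R^{N})$ and every bounded Lipschitz $\phi$ on $\R^{N}$, writing $w\phi(x)-w\phi(y)=\phi(x)(w(x)-w(y))+w(y)(\phi(x)-\phi(y))$ and using $|a+b|^{2}\le2|a|^{2}+2|b|^{2}$ gives
\[
[w\phi]^{2}\le 2\|\phi\|_{L^{\infty}}^{2}\,[w]^{2}+2\int_{\R^{N}}w(y)^{2}\,G_{\phi}(y)\,dy,\qquad G_{\phi}(y):=\int_{\R^{N}}\frac{|\phi(x)-\phi(y)|^{2}}{|x-y|^{N+2s}}\,dx.
\]
I would then record two elementary facts about the fixed $\varphi\in C^{\infty}_{c}(\R^{N})$: (i) $G_{\varphi}\in L^{N/(2s)}(\R^{N})$ --- it is bounded (split the defining integral at $|x-y|=1$, using $|\varphi(x)-\varphi(y)|\le\|\nabla\varphi\|_{L^{\infty}}|x-y|$ on the near part, where $2-N-2s>-N$, and $|\varphi(x)-\varphi(y)|\le2\|\varphi\|_{L^{\infty}}$ on the far part, where $-N-2s<-N$) and satisfies $G_{\varphi}(y)=O(|y|^{-(N+2s)})$ as $|y|\to\infty$ (there $\varphi(y)=0$ and $\supp\varphi$ lies at distance $\sim|y|$), so $G_{\varphi}^{N/(2s)}$ is integrable because $(N+2s)\tfrac{N}{2s}>N$; and (ii) the scaling identity $G_{\varphi_{r}}(y)=r^{-2s}G_{\varphi}(y/r)$, which yields $\|G_{\varphi_{r}}\|_{L^{N/(2s)}(A)}^{N/(2s)}=\int_{r^{-1}A}G_{\varphi}^{N/(2s)}$ for every measurable $A$, in particular $\|G_{\varphi_{r}}\|_{L^{N/(2s)}(\R^{N})}=\|G_{\varphi}\|_{L^{N/(2s)}(\R^{N})}$ for all $r>0$. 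Applying the product inequality with $\phi=\varphi_{r}$, $w=u$, and estimating the weight term by H\"older with the conjugate exponents $\tfrac{2^{*}_{s}}{2}$ and $\tfrac{N}{2s}$ together with Theorem \ref{Sembedding}, I obtain
\[
[u\varphi_{r}]^{2}\le 2I_{r}+2J_{r},\qquad I_{r}:=\iint_{\R^{2N}}\frac{\varphi_{r}(x)^{2}|u(x)-u(y)|^{2}}{|x-y|^{N+2s}}\,dxdy,\qquad J_{r}:=\int_{\R^{N}}u^{2}G_{\varphi_{r}}\le \|u\|_{L^{2^{*}_{s}}(\R^{N})}^{2}\,\|G_{\varphi}\|_{L^{N/(2s)}(\R^{N})}.
\]

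\emph{Step 2 (the limit $r\to0$).} I would introduce the finite Borel measure $d\mu=\frac{|u(x)-u(y)|^{2}}{|x-y|^{N+2s}}\,dxdy$ on $\R^{N}\times\R^{N}$, whose total mass is $[u]^{2}$. Choosing $R$ with $\supp\varphi\subset B_{R}$ one has $I_{r}\le\|\varphi\|_{L^{\infty}}^{2}\,\mu(B_{Rr}\times\R^{N})$, and since the sets $B_{Rr}\times\R^{N}$ decrease as $r\downarrow0$ to the $\mu$-null set $\{0\}\times\R^{N}$, continuity from above gives $I_{r}\to0$. For $J_{r}$, fix $\delta>0$ and split $J_{r}=\int_{B_{\delta}}u^{2}G_{\varphi_{r}}+\int_{\R^{N}\setminus B_{\delta}}u^{2}G_{\varphi_{r}}$; by H\"older and the scale invariance in (ii) the first term is $\le\|u\|_{L^{2^{*}_{s}}(B_{\delta})}^{2}\|G_{\varphi}\|_{L^{N/(2s)}}$, while the second is $\le\|u\|_{L^{2^{*}_{s}}(\R^{N})}^{2}\big(\int_{\R^{N}\setminus B_{\delta/r}}G_{\varphi}^{N/(2s)}\big)^{2s/N}\to0$ as $r\to0$, being the tail of an integrable function. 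Hence $\limsup_{r\to0}J_{r}\le\|u\|_{L^{2^{*}_{s}}(B_{\delta})}^{2}\|G_{\varphi}\|_{L^{N/(2s)}}$ for every $\delta>0$, which is $0$ by absolute continuity of the integral. Therefore $[u\varphi_{r}]^{2}\to0$.

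\emph{Step 3 (the limit $r\to\infty$ when $\varphi\equiv1$ near the origin).} Pick $\rho>0$ with $\varphi\equiv1$ on $B_{\rho}$ and set $\psi_{r}:=1-\varphi_{r}$, so that $u\varphi_{r}=u-u\psi_{r}$ and, by the triangle inequality, $|[u\varphi_{r}]-[u]|\le[u\psi_{r}]$; it suffices to show $[u\psi_{r}]\to0$. The product inequality with $\phi=\psi_{r}$ gives $[u\psi_{r}]^{2}\le2\widetilde{I}_{r}+2\widetilde{J}_{r}$, where $\widetilde{I}_{r}:=\iint\psi_{r}(x)^{2}|u(x)-u(y)|^{2}|x-y|^{-N-2s}\,dxdy$ and $\widetilde{J}_{r}:=\int u^{2}G_{\psi_{r}}$. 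Since $\psi_{r}\equiv0$ on $B_{\rho r}$ and $\|\psi_{r}\|_{L^{\infty}}\le1+\|\varphi\|_{L^{\infty}}$, we get $\widetilde{I}_{r}\le(1+\|\varphi\|_{L^{\infty}})^{2}\,\mu\big((\R^{N}\setminus B_{\rho r})\times\R^{N}\big)\to0$ as $r\to\infty$, again by continuity from above (the sets decrease to $\emptyset$). Because $\psi_{r}(x)-\psi_{r}(y)=-(\varphi_{r}(x)-\varphi_{r}(y))$, we have $G_{\psi_{r}}=G_{\varphi_{r}}$, and the same two-region split as in Step 2 --- now with a large ball $B_{A}$ and the scaled inner ball $B_{A/r}$, where $A/r\to0$ --- gives $\limsup_{r\to\infty}\widetilde{J}_{r}\le\|u\|_{L^{2^{*}_{s}}(\R^{N}\setminus B_{A})}^{2}\|G_{\varphi}\|_{L^{N/(2s)}}$ for every $A>0$, hence $0$. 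Thus $[u\psi_{r}]\to0$ and $[u\varphi_{r}]^{2}\to[u]^{2}$.

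\emph{Main obstacle.} The only genuine difficulty is that $u\in\mathcal{D}^{s,2}(\R^{N})$ is controlled only in $L^{2^{*}_{s}}$, not in $L^{2}$, so the ``commutator'' term $\int u^{2}G_{\varphi_{r}}$ cannot be absorbed through an $L^{\infty}$-bound on $G_{\varphi_{r}}$ (which behaves like $r^{-2s}$) nor through $L^{1}$-smallness of $G_{\varphi_{r}}$ (whose $L^{1}$-norm is $\sim r^{N-2s}$, but which does not by itself cover $r\to\infty$ or the splitting step). The correct invariant is $\|G_{\varphi_{r}}\|_{L^{N/(2s)}}$: it is exactly scale-invariant and pairs with $\|u\|_{L^{2^{*}_{s}}}^{2}$ via H\"older, and precisely because it does not shrink, the decay must be extracted by cutting the $y$-variable into a ball --- where $\|u\|_{L^{2^{*}_{s}}}$ is small by absolute continuity of the integral --- and its complement --- where the $L^{N/(2s)}$-mass of $G_{\varphi_{r}}$ escapes to infinity (for $r\to0$) or concentrates at the origin (for $r\to\infty$). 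The two supporting computations, $G_{\varphi}\in L^{N/(2s)}(\R^{N})$ and the identity $G_{\varphi_{r}}(y)=r^{-2s}G_{\varphi}(y/r)$, are elementary and rely exactly on $s<1$ and $N>2s$. (Equivalently, one may first establish a uniform-in-$r$ bound $[(u-v)\varphi_{r}]^{2}\le C\e^{2}$ for $v\in C^{\infty}_{c}$ with $[u-v]<\e$ --- from the very same inequality --- and note the claim for $v\in C^{\infty}_{c}$, whose cutoff $v\varphi_{r}$ has support shrinking to $\{0\}$ as $r\to0$ and vanishing for $r$ large.)
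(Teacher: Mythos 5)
Your proof is correct; note, though, that the paper itself offers no argument for this lemma --- it is quoted directly from the reference \cite{PP} --- so there is no in-paper proof to compare against, and your write-up should be judged as a self-contained proof. As such it is sound and follows the natural route: the pointwise splitting $u\varphi_{r}(x)-u\varphi_{r}(y)=\varphi_{r}(x)(u(x)-u(y))+u(y)(\varphi_{r}(x)-\varphi_{r}(y))$ yields $[u\varphi_{r}]^{2}\le 2I_{r}+2J_{r}$; the local terms $I_{r}$ (and $\tilde{I}_{r}$ in the regime $r\to\infty$) are handled by continuity from above of the finite measure $|u(x)-u(y)|^{2}|x-y|^{-N-2s}\,dxdy$, using that $\varphi_{r}$ is supported in $B_{Rr}$, respectively that $1-\varphi_{r}$ vanishes on $B_{\rho r}$; and the commutator term is controlled through the correctly identified scale-invariant quantity $\|G_{\varphi_{r}}\|_{L^{N/(2s)}}$ paired with $\|u\|_{L^{2^{*}_{s}}}^{2}$ by H\"older (the exponents $\tfrac{2^{*}_{s}}{2}$ and $\tfrac{N}{2s}$ are indeed conjugate), with the two-region split in $y$ extracting the required smallness from absolute continuity of the integral on one region and from the escaping $L^{N/(2s)}$-mass of $G_{\varphi_{r}}$ on the other. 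The supporting facts are verified correctly: boundedness of $G_{\varphi}$ (the near-diagonal part uses $s<1$), the decay $G_{\varphi}(y)=O(|y|^{-(N+2s)})$ giving $G_{\varphi}\in L^{N/(2s)}(\R^{N})$, the exact scaling $G_{\varphi_{r}}(y)=r^{-2s}G_{\varphi}(y/r)$, and, for the second limit, the reduction to $[u\,(1-\varphi_{r})]\to 0$ via the seminorm triangle inequality together with $G_{1-\varphi_{r}}=G_{\varphi_{r}}$. Your use of $\int_{\R^{N}}|(-\Delta)^{s/2}w|^{2}dx=[w]^{2}$ matches the normalization the paper adopts in Section 2, so the argument is complete and is essentially the standard proof of this statement, in the spirit of \cite{PP}.
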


\section{The modified problem}
\noindent
This section is devoted to the existence of positive solutions to \eqref{P}. From now on, we assume $N=3$ and $s\in (\frac{3}{4}, 1)$. 
After a change of variable, the problem \eqref{P} reduces to
\begin{equation}\label{Pe}
M\left(\iint_{\R^{6}}\frac{|u(x)- u(y)|^{2}}{|x-y|^{3+2s}} dxdy + \int_{\R^{3}} V(\e x)u^{2} dx\right)[(-\Delta)^{s}u+ V(\e x)u]= f(u) \, \mbox{ in } \R^{3}.  
\end{equation}
Take $K>\frac{2}{m_{0}}$ and $a>0$ such that $f(a)= \frac{V_{0}}{K}a$. Let us define
\begin{equation*}
\tilde{f}(t)=
\left\{
\begin{array}{ll}
f(t) & \mbox{ if } t\leq a\\
\frac{V_{0}}{K}t & \mbox{ if } t>a
\end{array}
\right.
\end{equation*}
and 
\begin{equation*}
g(x, t) = \chi_{\Omega}(x) f(t) + (1- \chi_{\Omega}(x))\tilde{f}(t). 
\end{equation*}
From the assumptions on $f$ we deduce that $g$ is a Carath\'eodory function and satisfies
\begin{compactenum}[$(g_1)$]
\item $\displaystyle{\lim_{t\rightarrow 0^{+}} \frac{g(x,t)}{t^{3}}=0}$ uniformly in $x\in \R^{3}$; 
\item $\displaystyle{\lim_{t\rightarrow \infty} \frac{g(x,t)}{t^{q-1}}=0}$ uniformly in $x\in \R^{3}$; 
\item (i) $0\displaystyle{\leq \vartheta G(x,t) < g(x,t) t}$ for any $x\in \Omega$ and for any $t>0$, \\
(ii) $\displaystyle{0\leq 2 G(x,t) \leq g(x,t) t \leq \frac{V_{0}}{K}t^{2}}$ for any $x\in \R^{3}\setminus \Omega$ and for any $t>0$; 
\item for each $x\in \Omega$ the application $\displaystyle{t\mapsto \frac{g(x,t)}{t^{3}}}$ is increasing in $(0, \infty)$ and for each $x\in \R^{3}\setminus \Omega$ the application $\displaystyle{t\mapsto \frac{g(x,t)}{t^{3}}}$ is increasing in $(0, a)$. 
\end{compactenum}
From the definition of $g$ follows that
\begin{align*}
&g(x, t)\leq f(t)  \quad \mbox{ for all } t>0, \mbox{ for all } x\in \R^{3}, \\
&g(x, t)=0  \quad \mbox{ for all } t<0, \mbox{ for all } x\in \R^{3}. 
\end{align*}
In what follows, we consider the auxiliary problem 
\begin{equation}\label{Pea}
M\left(\iint_{\R^{6}}\frac{|u(x)- u(y)|^{2}}{|x-y|^{3+2s}} dxdy + \int_{\R^{3}} V(\e x)u^{2} dx\right)[(-\Delta)^{s}u+ V(\e x)u]= g(\e x, u) \, \mbox{ in } \R^{3}.  
\end{equation}
Moreover, we focus our attention on positive solutions to \eqref{Pea} with $u(x)\leq a$ for each $x\in \R^{3}\setminus \Omega$.
Indeed, from definitions of $g$, it is clear that solutions having the above property are also solutions to the starting problem \eqref{Pe}.

\noindent
Therefore, solutions of \eqref{Pea} can be found as critical points of the following energy functional 
\begin{equation*}
\mathcal{J}_{\e}(u)= \frac{1}{2} \widehat{M}\left( \iint_{\R^{6}}\frac{|u(x)- u(y)|^{2}}{|x-y|^{3+2s}} dxdy + \int_{\R^{3}} V(\e x)u^{2} dx\right) - \int_{\R^{3}} G(\e x, u)\, dx
\end{equation*}
where 
\begin{equation*}
\widehat{M}(t)= \int_{0}^{t} M(\tau)\, d\tau \quad \mbox{ and } \quad G(\e x, t)= \int_{0}^{t} g(\e x, \tau)\, d\tau, 
\end{equation*}
which is well defined on the Hilbert space 
\begin{equation*}
\mathcal{H}_{\e}= \left\{u\in \h : \int_{\R^{3}} V(\e x)\, u^{2} dx<\infty \right\}
\end{equation*}
endowed with the inner product
\begin{equation*}
(u, \varphi)_{\e} = \iint_{\R^{6}} \frac{(u(x)- u(y))(\varphi(x) - \varphi(y))}{|x-y|^{3+2s}} \,dxdy + \int_{\R^{3}} V(\e x)\, u(x)\, \varphi(x) dx. 
\end{equation*}
The norm induced by the inner product is given by
\begin{equation*}
\|u\|_{\e}^{2} = \iint_{\R^{6}}\frac{|u(x)- u(y)|^{2}}{|x-y|^{3+2s}} \,dxdy + \int_{\R^{3}} V(\e x)\, u^{2} \,dx. 
\end{equation*}

\noindent
From the assumptions on $M$ and $f$, and by using Theorem \ref{Sembedding}, it is easy to check that $\mathcal{J}_{\e}$ is well-defined, $\mathcal{J}_{\e} \in C^{1}(\mathcal{H}_{\e}, \R)$ and that its differential $\mathcal{J}'$ is given by 
\begin{equation*}
\langle \mathcal{J}_{\e}'(u), \varphi \rangle = M(\|u\|_{\e}^{2}) (u, \varphi)_{\e} - \int_{\R^{3}} g(\e x, u)\varphi \, dx,  
\end{equation*}
for any $u, \varphi \in \mathcal{H}_{\e}$.
Let us introduce the Nehari manifold associated to $\mathcal{J}_{\e}$, that is 
\begin{equation*}
\N_{\e}= \{u\in \mathcal{H}_{\e}\setminus \{0\} : \langle \mathcal{J}_{\e}'(u), u \rangle=0\}. 
\end{equation*}

\noindent
The main result of this Section is the following.
\begin{thm}\label{thm2.1}
Under the assumptions $(M_{1})$-$(M_{3})$, $(V_{1})$-$(V_{2})$ and $(f_{1})$-$(f_{4})$, the auxiliary problem \eqref{Pea} has a nonnegative ground state solution for all $\e>0$. 
\end{thm}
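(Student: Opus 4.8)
The plan is to find a ground state of \eqref{Pea} by minimizing $\mathcal{J}_\e$ over the Nehari manifold $\N_\e$, using the Szulkin–Weth machinery since $g$ and $M$ are only continuous. First I would verify that $\mathcal{J}_\e$ has the mountain-pass geometry on $\mathcal{H}_\e$: by $(g_1)$–$(g_2)$ and the Sobolev embedding of Theorem \ref{Sembedding} (note $4 \le q < 6/(3-2s) = 2^*_s$ is where $s > 3/4$ enters), together with $(M_1)$ which gives $\widehat M(t) \ge m_0 t$, one gets $\mathcal{J}_\e(u) \ge c\|u\|_\e^2 - C\|u\|_\e^q$ for small $\|u\|_\e$, so $0$ is a strict local minimum; and for fixed $u$ with $\int_\Omega G(\e x, tu)\,dx > 0$, the superquadraticity from $(g_3)(i)$ (which forces $G(x,t)\ge c\,t^\vartheta$ on $\Omega$ for large $t$, with $\vartheta > 4$) beats the growth of $\widehat M(t\|u\|)$, which by $(M_3)$ is at most quartic — so $\mathcal{J}_\e(tu) \to -\infty$. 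Then I would check that for every $u \in \mathcal{H}_\e \setminus \{0\}$ with positive part nontrivial, the map $t \mapsto \mathcal{J}_\e(tu)$ on $(0,\infty)$ has a unique critical point $t_u > 0$, which is the global maximum: this is exactly where $(g_4)$ and $(M_2)$–$(M_3)$ are used, since $\langle \mathcal{J}_\e'(tu), tu\rangle = t^2 M(t^2\|u\|_\e^2)\|u\|_\e^2 - \int g(\e x, tu)\,tu\,dx$, divided by $t^4$, should be decreasing in $t$ — the term $\frac{M(t^2\|u\|^2)}{t^2\|u\|^2}$ is handled by $(M_3)$ (monotonicity of $t\mapsto M(t)/t$ up to the $m_0/t$ correction) and $\frac{g(\e x, tu)}{(tu)^3}$ by $(g_4)$, with the correction absorbed against $V(\e x)$. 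This makes $\N_\e$ a topological manifold homeomorphic to the unit sphere of $\mathcal{H}_\e$ via $u \mapsto t_u u$, and $c_\e := \inf_{\N_\e} \mathcal{J}_\e > 0$ has the minimax characterization $c_\e = \inf_{u \ne 0} \max_{t>0} \mathcal{J}_\e(tu)$.

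Next I would extract a minimizing sequence. By the Szulkin–Weth abstract theorem (or equivalently Ekeland's principle on the sphere), there is a Palais–Smale sequence $\{u_n\} \subset \N_\e$ for $\mathcal{J}_\e$ at level $c_\e$. Boundedness of $\{u_n\}$ in $\mathcal{H}_\e$ follows from the standard computation $c_\e + o(\|u_n\|_\e) \ge \mathcal{J}_\e(u_n) - \frac{1}{\vartheta}\langle \mathcal{J}_\e'(u_n), u_n\rangle$, using $\widehat M(t) - \frac{1}{\vartheta} M(t) t \ge (\tfrac12 - \tfrac1\vartheta) m_0 t$ from $(M_1)$–$(M_2)$ and the Ambrosetti–Rabinowitz-type inequalities in $(g_3)$ (on $\R^3 \setminus \Omega$ one uses $(g_3)(ii)$ together with $K > 2/m_0$ to absorb the extra term into the $V(\e x)$ part of $\|u_n\|_\e^2$). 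Passing to a subsequence, $u_n \rightharpoonup u$ in $\mathcal{H}_\e$, and up to the constant $t_0 := \lim \|u_n\|_\e^2 \ge 0$ (which I must show is positive — otherwise $u_n \to 0$ and $c_\e = 0$, a contradiction with $c_\e > 0$), the weak limit $u$ solves $M(t_0)[(-\Delta)^s u + V(\e x)u] = g(\e x, u)$, i.e. $u$ is a critical point provided $\|u\|_\e^2 = t_0$; the continuity of $M$ and the compact local embeddings of Theorem \ref{Sembedding} let me pass to the limit in the weak formulation against test functions.

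The main obstacle is the lack of compactness from $\R^3$: I must upgrade weak to strong convergence to conclude $\|u\|_\e^2 = t_0$ and $\mathcal{J}_\e(u) = c_\e$. Here the penalization is essential — on $\R^3 \setminus \Omega$ the bound $g(\e x, t)t \le \frac{V_0}{K}t^2$ with $K > 2/m_0$ means the "bad" part of the nonlinearity is dominated by a fraction of $m_0 \int V(\e x) u^2$, so no mass can leak to infinity: if $u = 0$ one derives $m_0 \limsup \|u_n\|_\e^2 \le \frac{V_0}{K}\limsup \int_{\R^3} u_n^2 \le \frac{1}{K}\limsup \|u_n\|_\e^2 \cdot \frac{V_0}{V_0}$... more precisely $\int_{\R^3 \setminus \Omega} g(\e x, u_n)u_n \le \frac{1}{K}\|u_n\|_\e^2$ while $\int_\Omega g(\e x, u_n)u_n \to 0$ by Lemma \ref{Lions}-type vanishing (since $u_n \to 0$ in $L^r_{loc}$), forcing $m_0 \|u_n\|_\e^2 \le o(1) + \frac{1}{K}\|u_n\|_\e^2$ and hence $u_n \to 0$, contradicting $c_\e>0$. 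Once $u \ne 0$, a Brezis–Lieb / Fatou argument on each term of $\langle \mathcal{J}_\e'(u_n), u_n\rangle = 0$ shows the norms converge, giving $u_n \to u$ strongly, $u \in \N_\e$, $\mathcal{J}_\e(u) = c_\e$. Finally, since $g(\e x, t) = 0$ for $t < 0$, testing the equation for $u$ with $u^- = \min\{u, 0\}$ and using $\langle (-\Delta)^{s/2} u, (-\Delta)^{s/2} u^-\rangle \ge \|u^-\|^2$ (fractional) together with $M(\|u\|_\e^2) \ge m_0 > 0$ yields $u^- = 0$, so $u \ge 0$ is the desired nonnegative ground state. $\hfill\qed$
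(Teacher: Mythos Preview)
Your overall strategy is correct and largely parallels the paper's: mountain-pass geometry, boundedness of the Palais--Smale sequence via the combination of $(M_1)$--$(M_3)$, $(g_3)$ and $K>2/m_0$, compactness driven by the penalization bound $g(\e x,t)t\le \tfrac{V_0}{K}t^2$ on $\R^3\setminus\Omega_\e$, and nonnegativity by testing with $u^-$. The paper assembles these in the short proof of Theorem~\ref{thm2.1} by citing Lemma~\ref{lem2.2} (geometry) and Proposition~\ref{prop2.2} (the $(PS)_d$ condition), then applies the Mountain Pass Theorem directly.

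The substantive difference is in how compactness is obtained. The paper does \emph{not} argue ``$u\ne 0$ then Brezis--Lieb/Fatou''; instead it proves the full $(PS)_d$ condition via a quantitative tail estimate (Lemma~\ref{lem2.5}): testing with $\eta_R u_n$ for a cutoff $\eta_R$ and using the penalization bound outside $\Omega_\e$, the point is to control the nonlocal commutator $\iint \frac{(u_n(x)-u_n(y))(\eta_R(x)-\eta_R(y))}{|x-y|^{3+2s}}u_n(y)\,dxdy$, which requires a careful decomposition of $\R^6$ and several pages of estimates. This, together with local norm convergence (Lemma~\ref{lem2.6}), yields $\|u_n\|_\e\to\|u\|_\e$ for \emph{any} $(PS)_d$ sequence, a stronger statement than what you need here and one the paper reuses for the multiplicity part. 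Your route avoids the commutator analysis, which is a genuine simplification at the ground-state level, but your ``Brezis--Lieb/Fatou'' step is underspecified: the Fatou-on-energy argument you seem to have in mind (as in the paper's Theorem~\ref{thm3.1} for the autonomous problem) relies on the monotonicity of $t\mapsto \tfrac14 f(t)t-F(t)$, which \emph{fails} for the truncated $g$ on $\R^3\setminus\Omega_\e$ once $t>a$ (there $\tfrac14 \tilde f(t)t-\tilde F(t)$ is decreasing). The Brezis--Lieb splitting route does work---writing $v_n=u_n-u$, using $M(t_0)\|u\|_\e^2=\int g(\e x,u)u$ from the limit equation and the subcritical growth of $g$ to split $\int g(\e x,u_n)u_n$, one gets $m_0\|v_n\|_\e^2\le \tfrac1K\|v_n\|_\e^2+o(1)$ exactly as in your $u=0$ argument---but you should spell this out rather than invoke Fatou.
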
 

\noindent
We denote by $\Omega_{\e}= \{x\in \R^{3} : \e x\in \Omega\}$ and 
\begin{equation*}
\mathcal{H}_{\e}^{+}= \{u\in \mathcal{H}_{\e} : |\supp(u^{+}) \cap \Omega_{\e}|>0\}\subset \mathcal{H}_{\e}. 
\end{equation*}
Let $\mathbb{S}_{\e}$ be the unit sphere of $\mathcal{H}_{\e}$ and we denote by $\mathbb{S}_{\e}^{+}= \mathbb{S}_{\e}\cap \mathcal{H}_{\e}^{+}$.
We observe that $\mathcal{H}_{\e}^{+}$ is open in $\mathcal{H}_{\e}$.
Indeed, let us consider a sequence $\{u_{n}\}_{n\in \mathbb{N}}\subset \mathcal{H}_{\e} \setminus \mathcal{H}_{\e}^{+}$ such that $u_{n}\rightarrow u$ in $\mathcal{H}_{\e}$ and assume by contradiction that $u\in \mathcal{H}_{\e}^{+}$. Now, from the definition of $\mathcal{H}_{\e}^{+}$ it follows that $|\supp(u_{n}^{+})\cap \Omega_{\e}|=0$ for all $n\in \mathbb{N}$ and $u_{n}^{+}(x)\rightarrow u^{+}(x)$ a.e. in $x\in \Omega_{\e}$. So,
\begin{equation*}
u^{+}(x)= \lim_{n\rightarrow \infty} u_{n}^{+}(x)=0 \, \mbox{ a.e. in } x\in \Omega_{\e}, 
\end{equation*}
and this contradicts the fact that $u\in \mathcal{H}_{\e}^{+}$. Therefore $\mathcal{H}_{\e}^{+}$ is open.

\noindent
From the definition of $\mathbb{S}_{\e}^{+}$ and the fact that $\mathcal{H}_{\e}^{+}$ is open in $\mathcal{H}_{\e}$, it follows that $\mathbb{S}_{\e}^{+}$ is a incomplete $C^{1,1}$-manifold of codimension $1$, modeled on $\mathcal{H}_{\e}$ and contained in the open $\mathcal{H}_{\e}^{+}$. Thus $\mathcal{H}_{\e}=T_{u} \mathbb{S}_{\e}^{+} \oplus \R u$ for each $u\in \mathbb{S}_{\e}^{+}$, where
\begin{equation*}
T_{u} \mathbb{S}_{\e}^{+}= \{v \in \mathcal{H}_{\e} : (u, \varphi)_{\e}=0\}.
\end{equation*} 
In the next lemma we prove that $\mathcal{J}_{\e}$ has a mountain pass geometry. 
\begin{lem}\label{lem2.2}
The functional $\mathcal{J}_{\e}$ satisfies the following
\begin{compactenum}[$(a)$]
\item there exist $\alpha, \rho>0$ such that $\mathcal{J}_{\e}(u) \geq \alpha $ with $\|u\|_{\e}= \rho$; 
\item there exists $e\in \mathcal{H}_{\e}\setminus B_{\rho}(0)$ such that $\mathcal{J}_{\e}(e)<0$.
\end{compactenum}
\end{lem}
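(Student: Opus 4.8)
The plan is to check the two conditions separately, using the subcritical bound on $g$ for $(a)$ and the Ambrosetti--Rabinowitz condition for $(b)$.

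For $(a)$, from $(g_1)$ and $(g_2)$ one gets, for every $\xi>0$, a constant $C_{\xi}>0$ with $g(x,t)\le \xi|t|^{3}+C_{\xi}|t|^{q-1}$ and hence $G(x,t)\le \frac{\xi}{4}|t|^{4}+\frac{C_{\xi}}{q}|t|^{q}$ for all $x\in\R^{3}$, $t\in\R$. Since $s>\frac{3}{4}$ we have $2^{*}_{s}=\frac{6}{3-2s}>4$, so $4$ and $q$ both lie in $[2,2^{*}_{s}]$; combining $V(\e x)\ge V_{0}>0$ with Theorem~\ref{Sembedding} gives $C_{1},C_{2}>0$ such that $\|u\|_{L^{4}(\R^{3})}^{4}\le C_{1}\|u\|_{\e}^{4}$ and $\|u\|_{L^{q}(\R^{3})}^{q}\le C_{2}\|u\|_{\e}^{q}$. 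Using $\widehat{M}(t)\ge m_{0}t$ from $(M_{1})$, one obtains $\mathcal{J}_{\e}(u)\ge \frac{m_{0}}{2}\|u\|_{\e}^{2}-\frac{\xi C_{1}}{4}\|u\|_{\e}^{4}-\frac{C_{\xi}C_{2}}{q}\|u\|_{\e}^{q}$; choosing $\xi$ small and then $\rho>0$ small, the two superquadratic terms are absorbed by $\frac{m_{0}}{4}\|u\|_{\e}^{2}$ on $\|u\|_{\e}=\rho$, so $\mathcal{J}_{\e}(u)\ge \frac{m_{0}}{4}\rho^{2}=:\alpha>0$.

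For $(b)$, fix $\varphi\in C^{\infty}_{c}(\R^{3})\setminus\{0\}$ with $\varphi\ge0$ and $\supp\varphi\subset\Omega_{\e}$, which is possible because $\Omega_{\e}$ is a nonempty open set (it contains $\Lambda_{\e}\neq\emptyset$), and consider $t\mapsto\mathcal{J}_{\e}(t\varphi)$ for $t>0$. Since $g(\e x,\cdot)=f$ on $\supp\varphi$, we have $\int_{\R^{3}}G(\e x,t\varphi)\,dx=\int_{\R^{3}}F(t\varphi)\,dx$, and integrating the inequality $\frac{f(\tau)}{F(\tau)}\ge\frac{\vartheta}{\tau}$ (coming from $(f_{3})$) yields $F(\tau)\ge F(1)\tau^{\vartheta}-F(1)$ for all $\tau\ge0$, with $F(1)>0$; hence $\int_{\R^{3}}G(\e x,t\varphi)\,dx\ge c_{1}t^{\vartheta}\int_{\R^{3}}\varphi^{\vartheta}\,dx-c_{2}$ for suitable $c_{1},c_{2}>0$. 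On the other hand $(M_{3})$ makes $\tau\mapsto\frac{M(\tau)-m_{0}}{\tau}$ non-increasing, which together with $(M_{1})$--$(M_{2})$ gives $M(\tau)\le C(1+\tau)$ and $\widehat{M}(\tau)\le C(1+\tau^{2})$ for all $\tau\ge0$; therefore $\mathcal{J}_{\e}(t\varphi)\le \frac{C}{2}(1+t^{4}\|\varphi\|_{\e}^{4})-c_{1}t^{\vartheta}\int_{\R^{3}}\varphi^{\vartheta}\,dx+c_{2}$. As $\vartheta>4$ and $\int_{\R^{3}}\varphi^{\vartheta}\,dx>0$, we conclude $\mathcal{J}_{\e}(t\varphi)\to-\infty$, so $e:=t_{0}\varphi$ with $t_{0}$ large does the job (and $\|e\|_{\e}>\rho$).

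I expect the only non-routine point to be the polynomial growth of $\widehat{M}$, which has to be extracted from the abstract hypotheses $(M_{1})$--$(M_{3})$ rather than from an explicit formula; note also that the competition $\vartheta>4$ against the quartic term $\widehat{M}(t^{2}\|\varphi\|_{\e}^{2})\sim t^{4}$ in $(b)$, as well as the embeddings into $L^{4}$ and $L^{q}$ in $(a)$, is precisely what the restriction $s\in(\frac{3}{4},1)$, i.e.\ $4<\frac{6}{3-2s}$, is needed for.
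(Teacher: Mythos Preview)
Your proof is correct and follows essentially the same approach as the paper: for $(a)$ both combine $(M_1)$ with the growth bound on $G$ coming from $(g_1)$--$(g_2)$ and the Sobolev embedding, and for $(b)$ both extract the linear bound $M(t)\le \gamma(1+t)$ from $(M_3)$ and play it against the Ambrosetti--Rabinowitz inequality $(f_3)$ to force $\mathcal{J}_{\e}(t u)\to-\infty$. The only cosmetic difference is that the paper carries out $(b)$ for a generic $u\in\mathcal{H}_{\e}^{+}$ (obtaining an estimate it reuses later), whereas you pick a concrete test function supported in $\Omega_{\e}$; for the lemma as stated this is entirely sufficient.
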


\begin{proof}
$(a)$ From the assumptions $(M_1)$, $(g_1)$, $(g_2)$ and Theorem \ref{Sembedding}, it follows that for any $\xi>0$ 
\begin{equation*}
\mathcal{J}_{\e}(u)=\frac{1}{2}\widehat{M}(\|u\|_{\e}^{2})- \int_{\R^{3}} G(\e x, u)\, dx \geq \frac{m_{0}}{2} \|u\|_{\e}^{2} -  \xi C\|u\|_{\e}^{4}- C_{\xi}C\|u\|_{\e}^{q}. 
\end{equation*}
Thus, we can find $\alpha, \rho>0$ such that $\mathcal{J}_{\e}(u) \geq \alpha $ with $\|u\|_{\e}= \rho$.\\
$(b)$ By using the assumption $(M_3)$ we can infer that there exists a positive constant $\gamma$ such that 
\begin{equation}\label{cc}
M(t)\leq \gamma (1+t) \, \mbox{ for all } \, t\geq 0.
\end{equation} 
Then, in view of $(g_3)$-(i), we can see that, for any $u\in \mathcal{H}_{\e}^{+}$ and $t>0$
\begin{align}\label{b2.2}
\mathcal{J}_{\e}(tu)&= \frac{1}{2} \widehat{M}(\|tu\|_{\e}^{2})- \int_{\R^{3}} G(\e x, tu)\, dx\nonumber\\
&\leq \frac{\gamma}{2}t^{2} \|u\|_{\e}^{2}+ \frac{\gamma}{4}t^{4} \|u\|_{\e}^{4} -\int_{\Omega_{\e}} G(\e x, tu)\, dx\nonumber\\
&\leq \frac{\gamma}{2}t^{2} \|u\|_{\e}^{2}+ \frac{\gamma}{4}t^{4} \|u\|_{\e}^{4} - C_{1} t^{\vartheta}\int_{\Omega_{\e}} (u^{+})^{\vartheta} \, dx + C_{2} |\supp(u^{+})\cap \Omega_{\e}|, 
\end{align}
for some positive constants $C_1$ and $C_2$. \\
Taking into account that $\vartheta \in (4,\frac{6}{3-2s})$, we get $\mathcal{J}_{\e}(t u)\rightarrow -\infty \mbox{ as } t\rightarrow +\infty$. 
\end{proof}

\noindent
Since $f$ and $M$ are continuous functions, we need the next results to overcome the non-differentiability of $\N_{\e}$ and the incompleteness of $\mathbb{S}_{\e}^{+}$.
\begin{lem}\label{lem2.3}
Assume that $(M_1)-(M_3)$, $(V_1)-(V_2)$ and $(f_1)-(f_4)$ hold true. Then, 
\begin{compactenum}[$(i)$]
\item For each $u\in \mathcal{H}_{\e}^{+}$, let $h:\R^{+}\rightarrow \R$ be defined by $h_{u}(t)= \mathcal{J}_{\e}(tu)$. Then, there is a unique $t_{u}>0$ such that 
\begin{align*}
&h'_{u}(t)>0 \mbox{ in } (0, t_{u})\\
&h'_{u}(t)<0 \mbox{ in } (t_{u}, \infty); 
\end{align*}
\item there exists $\tau>0$ independent of $u$ such that $t_{u}\geq \tau$ for any $u\in \mathbb{S}_{\e}^{+}$. Moreover, for each compact set $\mathbb{K}\subset \mathbb{S}_{\e}^{+}$ there is a positive constant $C_{\mathbb{K}}$ such that $t_{u}\leq C_{\mathbb{K}}$ for any $u\in \mathbb{K}$; 
\item The map $\hat{m}_{\e}: \mathcal{H}_{\e}^{+}\rightarrow \N_{\e}$ given by $\hat{m}_{\e}(u)= t_{u}u$ is continuous and $m_{\e}:= \hat{m}_{\e}|_{\mathbb{S}_{\e}^{+}}$ is a homeomorphism between $\mathbb{S}_{\e}^{+}$ and $\N_{\e}$. Moreover $m_{\e}^{-1}(u)=\frac{u}{\|u\|_{\e}}$; 
\item If there is a sequence $\{u_{n}\}_{n\in \mathbb{N}}\subset \mathbb{S}_{\e}^{+}$ such that ${\rm dist}(u_{n}, \partial \mathbb{S}_{\e}^{+})\rightarrow 0$ then $\|m_{\e}(u_{n})\|_{\e}\rightarrow \infty$ and $\mathcal{J}_{\e}(m_{\e}(u_{n}))\rightarrow \infty$.
\end{compactenum}

\end{lem}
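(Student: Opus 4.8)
The plan is to establish the four items of Lemma \ref{lem2.3} by exploiting the fibering map $h_u(t)=\mathcal{J}_\e(tu)$ together with the structural assumptions $(M_1)$--$(M_3)$ and $(g_1)$--$(g_4)$; the key point is that, although $f$ and $M$ are only continuous (so $\N_\e$ need not be a $C^1$-manifold), the map $t\mapsto h_u(t)$ is of class $C^1$ on $\R^+$ for each fixed $u$, and this is enough. For item $(i)$, I would write
\begin{equation*}
h_u'(t)=M(\|tu\|_\e^2)\,t\|u\|_\e^2-\int_{\R^3}g(\e x,tu)\,tu\,dx,
\end{equation*}
so that $t$ is a critical point of $h_u$ exactly when $tu\in\N_\e$. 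Dividing by $t^3\|u\|_\e^4$ and using that $h_u'(t)=0$ is equivalent to
\begin{equation*}
\frac{M(t^2\|u\|_\e^2)}{t^2\|u\|_\e^2}=\int_{\R^3}\frac{g(\e x,tu)}{(tu)^3}\,\Bigl(\frac{u^+}{\|u\|_\e}\Bigr)^4\frac{1}{ \|u\|_\e^{0}}\,dx \quad\text{(schematically)},
\end{equation*}
one sees the left-hand side is nonincreasing in $t$ by $(M_3)$ (which forces $t\mapsto M(t)/t$ to be nonincreasing, while $M(t)/t\ge m_0/t\to 0$), and the right-hand side is nondecreasing in $t$ by $(g_4)$ on $\Omega_\e$ (and nonnegative off $\Omega_\e$), strictly so on $\supp(u^+)\cap\Omega_\e$ which has positive measure since $u\in\mathcal{H}_\e^+$. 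Combined with $(g_1)$ (the right side $\to 0$ as $t\to 0^+$, so $h_u'>0$ near $0$) and the growth in Lemma \ref{lem2.2}$(b)$ (so $h_u(t)\to-\infty$, hence $h_u'<0$ for large $t$), this monotonicity comparison yields a \emph{unique} crossing point $t_u>0$ with the stated sign behaviour.

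For item $(ii)$, the lower bound $t_u\ge\tau$ follows from part $(a)$ of Lemma \ref{lem2.2}: if $\|u\|_\e=1$ then $\mathcal{J}_\e(t_uu)=\max_{t>0}\mathcal{J}_\e(tu)\ge\alpha>0$, while $(g_1)$--$(g_2)$ and Theorem \ref{Sembedding} give $\mathcal{J}_\e(tu)\le C t^2+Ct^4-\,0$ for all small $t$ uniformly in $u\in\mathbb{S}_\e$, forcing $t_u$ to stay away from $0$. For the upper bound on a compact set $\mathbb{K}\subset\mathbb{S}_\e^+$, I would argue by contradiction: if $t_{u_n}\to\infty$ with $u_n\to u$ in $\mathbb{S}_\e^+$, then $u\in\mathbb{S}_\e^+$, so $|\supp(u^+)\cap\Omega_\e|>0$, and splitting off a fixed subset of positive measure where $u^+\ge c>0$ one gets from \eqref{b2.2}-type estimates and $(g_3)$-(i) that $\mathcal{J}_\e(t_{u_n}u_n)\le \frac{\gamma}{2}t_{u_n}^2+\frac{\gamma}{4}t_{u_n}^4-C_1t_{u_n}^\vartheta\int_{\Omega_\e}(u_n^+)^\vartheta+C_2$, and since $\vartheta>4$ and $\int_{\Omega_\e}(u_n^+)^\vartheta\to\int_{\Omega_\e}(u^+)^\vartheta>0$ this tends to $-\infty$, contradicting $\mathcal{J}_\e(t_{u_n}u_n)=h_{u_n}(t_{u_n})\ge h_{u_n}(1)\ge -C$ (bounded below along the compact set).

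For item $(iii)$, continuity of $u\mapsto t_u$ is a standard consequence of uniqueness in $(i)$ together with the uniform bounds in $(ii)$: if $u_n\to u$ in $\mathcal{H}_\e^+$ then $\{t_{u_n}\}$ is bounded (locally) and bounded below, so up to a subsequence $t_{u_n}\to t_0>0$; passing to the limit in $h_{u_n}'(t_{u_n})=0$ and using continuity of $M$ and $g$ gives $h_u'(t_0)=0$, whence $t_0=t_u$ by uniqueness, and the full sequence converges. Then $\hat m_\e(u)=t_uu$ is continuous, $m_\e=\hat m_\e|_{\mathbb{S}_\e^+}$ maps into $\N_\e$, its inverse is manifestly $u\mapsto u/\|u\|_\e$ (which lands in $\mathbb{S}_\e^+$ because $\N_\e\subset\mathcal{H}_\e^+$, as $\langle\mathcal{J}_\e'(u),u\rangle=0$ with $u\ne 0$ forces $\int g(\e x,u)u\,dx=M(\|u\|_\e^2)\|u\|_\e^2>0$, which requires $|\supp(u^+)\cap\Omega_\e|>0$ by $(g_3)$-(ii)), and both maps are continuous, so $m_\e$ is a homeomorphism. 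Finally, for item $(iv)$, if $\mathrm{dist}(u_n,\partial\mathbb{S}_\e^+)\to 0$ with $u_n\in\mathbb{S}_\e^+$, then for every $v\in\partial\mathbb{S}_\e^+$ (so $v^+=0$ a.e. on $\Omega_\e$) one has $\|u_n^+-v\|_\e\ge\|u_n^+-v\|_{L^{\vartheta}(\Omega_\e)}\gtrsim\|u_n^+\|_{L^\vartheta(\Omega_\e)}$ up to constants from Theorem \ref{Sembedding}, hence $\|u_n^+\|_{L^\vartheta(\Omega_\e)}\to 0$; feeding this into the upper estimate \eqref{b2.2} for $\mathcal{J}_\e(tu_n)$ one gets, for every fixed $t>0$,
\begin{equation*}
\liminf_{n\to\infty}\mathcal{J}_\e(tu_n)\ge \frac{m_0}{2}t^2\|u_n\|_\e^2 - o(1) \ \text{is not quite it};
\end{equation*}
rather, more carefully, $\int_{\Omega_\e}G(\e x,tu_n)\le \xi t^4\|u_n\|_\e^4+C_\xi t^\vartheta\|u_n^+\|_{L^\vartheta(\Omega_\e)}^\vartheta\to\xi t^4$ while off $\Omega_\e$ one uses $(g_3)$-(ii) to bound $G(\e x,tu_n)\le\frac{V_0}{2K}t^2u_n^2$, so that $\mathcal{J}_\e(tu_n)\ge\frac{m_0}{2}t^2-\frac{V_0}{2K}t^2-\xi t^4-o(1)$, and since $K>2/m_0$ the coefficient $\frac{m_0}{2}-\frac{V_0}{2K}\ge\frac{m_0}{4}>0$; choosing $\xi$ small and then $t$ large shows $\liminf_n\mathcal{J}_\e(tu_n)$ can be made arbitrarily large, hence $\mathcal{J}_\e(m_\e(u_n))=\max_{t>0}\mathcal{J}_\e(tu_n)\to\infty$, and from $\mathcal{J}_\e(m_\e(u_n))\le\frac{\gamma}{2}\|m_\e(u_n)\|_\e^2+\frac{\gamma}{4}\|m_\e(u_n)\|_\e^4$ (using \eqref{cc} and $G\ge0$) we conclude $\|m_\e(u_n)\|_\e\to\infty$.

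The main obstacle I anticipate is item $(iv)$: making precise the inequality $\mathrm{dist}(u_n,\partial\mathbb{S}_\e^+)\to 0\Rightarrow\|u_n^+\|_{L^\vartheta(\Omega_\e)}\to 0$ requires a careful approximation argument (given $u_n$, one constructs a competitor $v_n\in\partial\mathbb{S}_\e^+$ by essentially zeroing out $u_n^+$ on $\Omega_\e$ and renormalizing, then controls $\|u_n-v_n\|_\e$ from below by a Sobolev-embedding-type quantity), and one must also verify the uniformity of all constants in $\e$ is not needed here (it is not—$\e$ is fixed throughout Lemma \ref{lem2.3}). The monotonicity step in $(i)$, which crucially uses $(M_3)$ to make $t\mapsto M(t)/t$ nonincreasing, is the other place where the argument could go wrong if the Kirchhoff hypotheses were weaker, but under $(M_1)$--$(M_3)$ it goes through cleanly.
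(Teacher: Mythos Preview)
Your overall strategy matches the paper's, and items $(ii)$--$(iv)$ are essentially correct as sketched. However, there is a genuine gap in the uniqueness part of item $(i)$.

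You argue that the right-hand side
\[
\frac{1}{\|u\|_\e^4}\int_{\R^3}\frac{g(\e x,tu)}{(tu)^3}\,u^4\,dx
\]
is nondecreasing in $t$ by $(g_4)$. This is false off $\Omega_\e$: condition $(g_4)$ only asserts monotonicity of $t\mapsto g(\e x,t)/t^3$ on $(0,a)$ for $x\in\R^3\setminus\Omega_\e$, and for $tu>a$ there one has $\tilde f(tu)/(tu)^3=V_0/(K(tu)^2)$, which is \emph{decreasing} in $t$. So the simple ``LHS nonincreasing, RHS nondecreasing, strict somewhere'' comparison does not go through, and your parenthetical ``(and nonnegative off $\Omega_\e$)'' does not repair it.

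The paper closes this gap by a direct contradiction argument that exploits the explicit form of the penalization. Assuming $t_1>t_2>0$ both satisfy the Nehari identity, one subtracts the two equations (after dividing by $t_i^3\|u\|_\e^4$), keeps the $\Omega_\e$ contribution on one side (it has the good sign by $(g_4)$), and splits the $\R^3\setminus\Omega_\e$ integral into the three regions $\{t_2u>a\}$, $\{t_2u\le a<t_1u\}$, $\{t_1u<a\}$. Using $\tilde f(t)=\frac{V_0}{K}t$ on the first two regions and $(g_4)$ on the third, and then multiplying through by the negative quantity $\|u\|_\e^4\,\dfrac{t_1^2t_2^2}{t_2^2-t_1^2}$, one arrives at $m_0\|u\|_\e^2\le\frac{1}{K}\|u\|_\e^2$, contradicting $K>2/m_0$. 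The point is that the ``bad'' decrease of $\tilde f(t)/t^3$ on the exterior is controlled by $1/K$, while $(M_3)$ gives a lower bound on the decrease of $M(\tau)/\tau$ of size $m_0$; the choice $K>2/m_0$ is exactly what makes these compete correctly. Your sketch never invokes $K$ in part $(i)$, which is a sign that this mechanism is missing.

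One minor slip in $(iv)$: your coefficient $\tfrac{m_0}{2}-\tfrac{V_0}{2K}$ is not controlled by $K>2/m_0$ alone. Absorb $V_0\le V(\e x)$ into the weighted norm first, so that the bound reads $\int_{\R^3\setminus\Omega_\e}G(\e x,tu_n)\,dx\le\frac{t^2}{2K}\|u_n\|_\e^2=\frac{t^2}{2K}$; then the coefficient becomes $\tfrac{m_0}{2}-\tfrac{1}{2K}>0$, as in the paper.
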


\begin{proof}
$(i)$ We know that $h_{u}\in C^{1}(\R^{+}, \R)$, and by Lemma \ref{lem2.2} we have that $h_{u}(0)=0$, $h_{u}(t)>0$ for $t>0$ small enough and $h_{u}(t)<0$ for $t>0$ sufficiently  large. So there exists $t_{u}>0$ such that $h_{u}'(t_{u})=0$, and $t_{u}$ is a global maximum for $h_{u}$. \\
Then,  
\begin{equation*}
0= h_{u}'(t_{u})= \langle \mathcal{J}_{\e}'(t_{u}u), u\rangle = \frac{1}{t_{u}} \langle \mathcal{J}_{\e}'(t_{u}u), t_{u}u\rangle
\end{equation*}
from which we deduce that $t_{u}u \in \N_{\e}$. \\
Now, we aim to prove the uniqueness of such $t_{u}$. Assume by contradiction that there exist $t_{1}> t_{2}>0$ such that $h_{u}'(t_{1})=h_{u}'(t_{2})=0$, or equivalently
\begin{align}
&t_{1} M(\|t_{1}u \|_{\e}^{2}) \|u\|_{\e}^{2} = \int_{\R^{3}} g(\e x, t_{1}u) u\, dx \label{t_1}\\
&t_{2} M(\|t_{2}u \|_{\e}^{2}) \|u\|_{\e}^{2} = \int_{\R^{3}} g(\e x, t_{2}u) u\, dx. \label{t_2}
\end{align}
Dividing both members of \eqref{t_1} by $t_{1}^{3}\|u\|_{\e}^{4}$ we get
\begin{align*}
\frac{M(\|t_{1}u \|_{\e}^{2})}{\|t_{1}u\|_{\e}^{2}} = \frac{1}{\|u\|_{\e}^{4}}\int_{\R^{3}} \frac{g(\e x, t_{1}u)}{(t_{1}u)^{3}} u^{4} dx, 
\end{align*}
similarly, dividing both members of \eqref{t_2} by $t_{2}^{3}\|u\|_{\e}^{4}$ we obtain
\begin{align*}
\frac{M(\|t_{2}u \|_{\e}^{2})}{\|t_{2}u\|_{\e}^{2}} = \frac{1}{\|u\|_{\e}^{4}}\int_{\R^{3}} \frac{g(\e x, t_{2}u)}{(t_{2}u)^{3}} u^{4} dx.  
\end{align*}
Subtracting the above identities, and taking into account $(M_3)$ and $(g_4)$ we can see that  
\begin{align*}
\frac{m_{0}}{\|u\|_{\e}^{2}}\left(\frac{1}{t_{1}^{2}}-\frac{1}{t_{2}^{2}}\right)&\geq \frac{M(\|t_{1}u \|_{\e}^{2})}{\|t_{1}u\|_{\e}^{2}}-\frac{M(\|t_{2}u \|_{\e}^{2})}{\|t_{2}u\|_{\e}^{2}}\\
&= \frac{1}{\|u\|_{\e}^{4}}\int_{\R^{3}} \left[\frac{g(\e x, t_{1}u)}{(t_{1}u)^{3}} - \frac{g(\e x, t_{2}u)}{(t_{2}u)^{3}}\right] u^{4} dx\\
&=\frac{1}{\|u\|_{\e}^{4}}\int_{\R^{3}\setminus \Omega_{\e}} \left[\frac{g(\e x, t_{1}u)}{(t_{1}u)^{3}} - \frac{g(\e x, t_{2}u)}{(t_{2}u)^{3}}\right] u^{4} dx+ \frac{1}{\|u\|_{\e}^{4}}\int_{\Omega_{\e}} \left[\frac{g(\e x, t_{1}u)}{(t_{1}u)^{3}} - \frac{g(\e x, t_{2}u)}{(t_{2}u)^{3}}\right] u^{4} dx\\
&\geq \frac{1}{\|u\|_{\e}^{4}}\int_{\R^{3}\setminus \Omega_{\e}} \left[\frac{g(\e x, t_{1}u)}{(t_{1}u)^{3}} - \frac{g(\e x, t_{2}u)}{(t_{2}u)^{3}}\right] u^{4} dx\\
&= \frac{1}{\|u\|_{\e}^{4}}\int_{(\R^{3}\setminus \Omega_{\e})\cap \{t_{2}u>a\}} \left[\frac{g(\e x, t_{1}u)}{(t_{1}u)^{3}} - \frac{g(\e x, t_{2}u)}{(t_{2}u)^{3}}\right] u^{4} dx \\
&+ \frac{1}{\|u\|_{\e}^{4}}\int_{(\R^{3}\setminus \Omega_{\e})\cap \{t_{2}u\leq a <t_{1}u\}} \left[\frac{g(\e x, t_{1}u)}{(t_{1}u)^{3}} - \frac{g(\e x, t_{2}u)}{(t_{2}u)^{3}}\right] u^{4} dx\\
&+\frac{1}{\|u\|_{\e}^{4}}\int_{(\R^{3}\setminus \Omega_{\e})\cap \{t_{1}u<a\}} \left[\frac{g(\e x, t_{1}u)}{(t_{1}u)^{3}} - \frac{g(\e x, t_{2}u)}{(t_{2}u)^{3}}\right] u^{4} dx =:I+II+III
\end{align*}
Let us observe that $III\geq 0$ in view of $(g_4)$ and $t_1>t_2$. Taking into account the definition of $g$, we have 
\begin{align*}
I&\geq \frac{1}{\|u\|_{\e}^{4}} \int_{(\R^{3}\setminus \Omega_{\e})\cap \{t_{2}u>a\}} \left[ \frac{V_{0}}{K} \frac{1}{(t_{1}u)^{2}} - \frac{V_{0}}{K}\frac{1}{(t_{2}u)^{2}}\right]u^{4}dx \\
&= \frac{1}{\|u\|_{\e}^{4}}\frac{1}{K}\left(\frac{1}{t_{1}^{2}} - \frac{1}{t_{2}^{2}}\right) \int_{(\R^{3}\setminus \Omega_{\e})\cap \{t_{2}u>a\}} V_{0}u^{2} dx. 
\end{align*}
Regarding $II$, from the definition of $g$ and taking into account that $g(x,t)\leq f(t)$, we can infer
\begin{align*}
II&\geq \frac{1}{\|u\|_{\e}^{4}}\int_{(\R^{3}\setminus \Omega_{\e})\cap \{t_{2}u\leq a <t_{1}u\}} \left[ \frac{V_{0}}{K}\frac{1}{(t_{1}u)^{2}} - \frac{f(t_{2}u)}{(t_{2}u)^{3}}\right] u^{4} dx. 
\end{align*}
Therefore we deduce
\begin{align*}
\frac{m_{0}}{\|u\|_{\e}^{2}}\left(\frac{1}{t_{1}^{2}}-\frac{1}{t_{2}^{2}}\right) &\geq \frac{1}{\|u\|_{\e}^{4}}\frac{1}{K}\left(\frac{1}{t_{1}^{2}} - \frac{1}{t_{2}^{2}}\right) \int_{(\R^{3}\setminus \Omega_{\e})\cap \{t_{2}u>a\}} V_{0}u^{2} dx \\
&+ \frac{1}{\|u\|_{\e}^{4}}\int_{(\R^{3}\setminus \Omega_{\e})\cap \{t_{2}u\leq a <t_{1}u\}} \left[ \frac{V_{0}}{K}\frac{1}{(t_{1}u)^{2}} - \frac{f(t_{2}u)}{(t_{2}u)^{3}}\right] u^{4} dx.  
\end{align*}
Multiplying both sides by $\|u\|_{\e}^{4}\frac{t_{1}^{2}t_{2}^{2}}{t_{2}^{2}- t_{1}^{2}}<0$, we have
\begin{align*}
m_{0}\|u\|_{\e}^{2}&\leq \frac{1}{K} \int_{(\R^{3}\setminus \Omega_{\e})\cap \{t_{2}u>a\}} V_{0}u^{2} dx + \frac{t_{1}^{2}t_{2}^{2}}{t_{2}^{2}- t_{1}^{2}} \int_{(\R^{3}\setminus \Omega_{\e})\cap \{t_{2}u\leq a <t_{1}u\}} \left[ \frac{V_{0}}{K}\frac{1}{(t_{1}u)^{2}} - \frac{f(t_{2}u)}{(t_{2}u)^{3}}\right] u^{4} dx \\
&= \frac{1}{K} \int_{(\R^{3}\setminus \Omega_{\e})\cap \{t_{2}u>a\}} V_{0}u^{2} dx \\
&- \frac{t_{2}^{2}}{t_{1}^{2}- t_{2}^{2}} \int_{(\R^{3}\setminus \Omega_{\e})\cap \{t_{2}u\leq a <t_{1}u\}} \frac{V_{0}}{K}u^{2} dx + \frac{t_{1}^{2}}{t_{1}^{2}- t_{2}^{2}} \int_{(\R^{3}\setminus \Omega_{\e})\cap \{t_{2}u\leq a <t_{1}u\}} \frac{f(t_{2}u)}{t_{2}u} u^{2} dx \\
&\leq \frac{1}{K} \int_{\R^{3}\setminus \Omega_{\e}} V_{0}u^{2} dx \leq \frac{1}{K} \|u\|_{\e}^{2}. 
\end{align*}
Then, by using the fact $u\neq 0$ and $K>\frac{2}{m_{0}}$, we get $m_{0}\leq \frac{1}{K}<m_{0}$, and this is a contradiction. 

\noindent
$(ii)$ Let $u\in \mathbb{S}_{\e}^{+}$. By $(i)$ there exists $t_{u}>0$ such that $h_{u}'(t_{u})=0$, or equivalently
\begin{equation*}
t_{u} M(t_{u}^{2})= \int_{\R^{3}} g(\e x, t_{u}u) \,u \,dx. 
\end{equation*}
By assumptions $(g_{1})$ and $(g_{2})$, given $\xi>0$ there exists a positive constant $C_{\xi}$ such that
\begin{equation*}
|G(x, t)|\leq \xi t^{4} + C_{\xi} |t|^{q}, \quad \mbox{ for every } t\in \R.
\end{equation*} 
The above inequality, the assumption $(M_1)$ and the Sobolev embedding in Theorem \ref{Sembedding} yield
\begin{align*}
m_{0}t_{u}&\leq M(t_{u}^{2}) t_{u}= \int_{\R^{3}} g(\e x, t_{u}u)\, u\, dx\\
&\leq \xi \frac{t_{u}^{4}}{4}C_{1} + C_{\xi} \frac{t_{u}^{q}}{q}C_{2}. 
\end{align*}
Thus we obtain that there exists $\tau>0$, independent of $u$, such that $t_{u}\geq \tau$. Now, let $\mathbb{K}\subset \mathbb{S}_{\e}^{+}$ be a compact set and we show that $t_{u}$ can be estimated from above by a constant depending on $\mathbb{K}$. Assume by contradiction that there exists a sequence $\{u_{n}\}_{n\in \mathbb{N}}\subset \mathbb{K}$ such that $t_{n}:=t_{u_{n}}\rightarrow \infty$. Therefore, there exists $u\in \mathbb{K}$ such that $u_{n}\rightarrow u$ in $\mathcal{H}_{\e}$. From \eqref{b2.2} we get 
\begin{equation}\label{canc}
\mathcal{J}_{\e}(t_{n}u_{n})\rightarrow -\infty. 
\end{equation}
Fix $v\in \N_{\e}$. Then, by using the fact that $\langle \mathcal{J}_{\e}'(v), v \rangle=0$, and the assumptions $(g_{3})$-(i) and $(g_{3})$-(ii), we can infer
\begin{align*}
\mathcal{J}_{\e}(v)&= \mathcal{J}_{\e}(v)- \frac{1}{\vartheta} \langle \mathcal{J}_{\e}'(v), v \rangle\\
&=\frac{1}{2}\widehat{M}(\|v\|_{\e}^{2})-\frac{1}{\vartheta} M(\|v\|_{\e}^{2})\|v\|_{\e}^{2}+ \frac{1}{\vartheta}\int_{\R^{3}} [g(\e x, v)v- \vartheta G(\e x, v)]\, dx\\
&=\frac{1}{2}\widehat{M}(\|v\|_{\e}^{2})-\frac{1}{\vartheta} M(\|v\|_{\e}^{2})\|v\|_{\e}^{2}+ \frac{1}{\vartheta}\int_{\R^{3}\setminus \Omega_{\e}} [g(\e x, v)v- \vartheta G(\e x, v)]\, dx + \frac{1}{\vartheta}\int_{\Omega_{\e}} [g(\e x, v)v- \vartheta G(\e x, v)]\, dx\\
&\geq \frac{1}{2}\widehat{M}(\|v\|_{\e}^{2})-\frac{1}{\vartheta} M(\|v\|_{\e}^{2})\|v\|_{\e}^{2}+ \frac{1}{\vartheta}\int_{\R^{3}\setminus \Omega_{\e}} [g(\e x, v)v- \vartheta G(\e x, v)]\, dx \\
&\geq \frac{1}{2}\widehat{M}(\|v\|_{\e}^{2})-\frac{1}{\vartheta} M(\|v\|_{\e}^{2})\|v\|_{\e}^{2} -\left(\frac{\vartheta-2}{2\vartheta}\right) \frac{1}{K}\int_{\R^{3}\setminus \Omega_{\e}} V(\e x) v^{2}dx\\
& \geq \frac{1}{2}\widehat{M}(\|v\|_{\e}^{2})-\frac{1}{\vartheta} M(\|v\|_{\e}^{2})\|v\|_{\e}^{2}-\left(\frac{\vartheta-2}{2\vartheta}\right)\frac{1}{K}\|v\|_{\e}^{2}.
\end{align*}
Now, by using $(M_3)$, we know that 
\begin{equation}\label{ccc} 
\widehat{M}(t)\geq \frac{M(t)+m_{0}}{2} t \, \mbox{ for any } t\geq 0.
\end{equation} 
This together with $(M_{1})$ implies that
\begin{align}\label{cancan}
\mathcal{J}_{\e}(v) &\geq \frac{1}{4} \left[M(\|v\|_{\e}^{2}) + m_{0} \right]\|v\|_{\e}^{2} - \frac{1}{\vartheta} M(\|v\|_{\e}^{2}) \|v\|_{\e}^{2} - \left(\frac{\vartheta-2}{2\vartheta}\right)\frac{1}{K}\|v\|_{\e}^{2}\nonumber\\
&= \frac{\vartheta-4}{4\vartheta} M(\|v\|_{\e}^{2}) \|v\|_{\e}^{2} + \frac{1}{4}m_{0} \|v\|_{\e}^{2} - \left(\frac{\vartheta-2}{2\vartheta}\right)\frac{1}{K}\|v\|_{\e}^{2}\nonumber \\
&\geq \left(\frac{\vartheta-4}{4\vartheta} + \frac{1}{4}\right) m_{0}\|v\|_{\e}^{2} - \left(\frac{\vartheta-2}{2\vartheta}\right) \frac{1}{K}\|v\|_{\e}^{2} \nonumber \\
&= \left(\frac{\vartheta-2}{2\vartheta}\right) \left( m_{0}- \frac{1}{K}\right) \|v\|_{\e}^{2}.
\end{align}
Taking into account that $\{t_{u_{n}}u_{n}\}_{n\in \mathbb{N}}\subset \N_{\e}$ and $K>\frac{2}{m_{0}}$, from \eqref{cancan} we deduce that \eqref{canc} does not hold. 

\noindent
$(iii)$ Firstly, we note that $\hat{m}_{\e}$, $m_{\e}$ and $m_{\e}^{-1}$ are well defined. Indeed, by $(i)$ for each $u\in \mathcal{H}_{\e}^{+}$ there exists a unique $m_{\e}(u)\in \N_{\e}$. On the other hand, if $u\in \N_{\e}$ then $u\in \mathcal{H}_{\e}^{+}$. Otherwise if $u\notin \mathcal{H}_{\e}^{+}$, we have
\begin{equation*}
|\supp (u^{+}) \cap \Omega_{\e}|=0, 
\end{equation*}
which together with $(g3)$-(ii) gives 
\begin{align}\label{ter1}
0<M(\|u\|_{\e}^{2})\|u\|_{\e}^{2}&= \int_{\R^{3}} g(\e x, u)\, u \, dx \nonumber \\
&= \int_{\R^{3}\setminus \Omega_{\e}} g(\e x, u)\, u \, dx + \int_{\Omega_{\e}} g(\e x, u)\, u \, dx \nonumber \\
&= \int_{\R^{3}\setminus \Omega_{\e}} g(\e x, u^{+})\, u^{+} \, dx \nonumber \\
&\leq \frac{1}{K} \int_{\R^{3}\setminus \Omega_{\e}} V(\e x) u^{2} dx \leq \frac{1}{K} \|u\|_{\e}^{2}.  
\end{align}
By using $(M_{1})$ and \eqref{ter1} we get
\begin{align*}
0<m_{0} \|u\|_{\e}^{2}\leq M(\|u\|_{\e}^{2}) \|u\|_{\e}^{2} \leq \frac{1}{K} \|u\|_{\e}^{2}
\end{align*}
and this leads to a contradiction because $\frac{1}{K}<\frac{m_{0}}{2}$. 
As a consequence $m_{\e}^{-1}(u)= \frac{u}{\|u\|_{\e}}\in \mathbb{S}_{\e}^{+}$, $m_{\e}^{-1}$ is well defined and continuous. \\
Now, let $u\in \mathbb{S}_{\e}^{+}$, then
\begin{align*}
m_{\e}^{-1}(m_{\e}(u))= m_{\e}^{-1}(t_{u}u)= \frac{t_{u}u}{\|t_{u}u\|_{\e}}= \frac{u}{\|u\|_{\e}}=u
\end{align*}
from which $m_{\e}$ is a bijection. Now, our aim is to prove that $\hat{m}_{\e}$ is a continuous function. Let $\{u_{n}\}_{n\in \mathbb{N}}\subset \mathcal{H}_{\e}^{+}$ and $u\in \mathcal{H}_{\e}^{+}$ such that $u_{n}\rightarrow u$ in $\mathcal{H}_{\e}^{+}$. Thus,
\begin{equation*}
\frac{u_{n}}{\|u_{n}\|_{\e}}\rightarrow \frac{u}{\|u\|_{\e}} \mbox{ in } \mathcal{H}_{\e}.
\end{equation*} 
Let $v_{n}:= \frac{u_{n}}{\|u_{n}\|_{\e}}$ and $t_{n}:=t_{v_{n}}$. By $(ii)$ there exists $t_{0}>0$ such that $t_{n}\rightarrow t_{0}$. Since $t_{n}v_{n}\in \N_{\e}$ and $\|v_{n}\|_{\e}=1$, we have 
\begin{equation*}
M(t_{n}^{2})\,t_{n}= \int_{\R^{3}} g(\e x, t_{n}v_{n})\, v_{n} \, dx, 
\end{equation*}
or equivalently
\begin{equation*}
M(t_{n}^{2})\,t_{n}= \frac{1}{\|u_{n}\|_{\e}}\int_{\R^{3}} g(\e x, t_{n}v_{n}) \,u_{n} \, dx. 
\end{equation*}
By passing to the limit as $n\rightarrow \infty$ we obtain
\begin{equation*}
M(t_{0}^{2}) \,t_{0}= \frac{1}{\|u\|_{\e}} \int_{\R^{3}} g(\e x, t_{0}v) \, u\, dx, 
\end{equation*}
where $v= \frac{u}{\|u\|_{\e}}$, that implies that $t_{0}v\in \N_{\e}$. By $(i)$ we deduce that $t_{v}= t_{0}$, and this shows that
\begin{equation*}
\hat{m}_{\e}(u_{n})= \hat{m}_{\e}\left(\frac{u_{n}}{\|u_{n}\|_{\e}}\right)\rightarrow \hat{m}_{\e}\left(\frac{u}{\|u\|_{\e}}\right)=\hat{m}_{\e}(u) \, \mbox{ in } \mathcal{H}_{\e}.  
\end{equation*}
Therefore $\hat{m}_{\e}$ and $m_{\e}$ are continuous functions. 

\noindent
$(iv)$ Let $\{u_{n}\}_{n\in \mathbb{N}}\subset \mathbb{S}_{\e}^{+}$ be such that ${\rm dist}(u_{n}, \partial \mathbb{S}_{\e}^{+})\rightarrow 0$. Observing that for each $p\in [2, 2^{*}_{s}]$ and $n\in \mathbb{N}$ it holds
\begin{align*}
\|u_{n}^{+}\|_{L^{p}(\Omega_{\e})} &\leq \inf_{v\in \partial \mathcal{S}_{\e}^{+}} \|u_{n}- v\|_{L^{p}(\Omega_{\e})}\\
&\leq C_{p} \inf_{v\in \partial \mathcal{S}_{\e}^{+}} \|u_{n}- v\|_{\e},
\end{align*}
by $(g_{1})$, $(g_{2})$, and $(g_{3})$-(ii), we can infer
\begin{align*}
\int_{\R^{3}} G(\e x, tu_{n})\, dx &= \int_{\R^{3}\setminus \Omega_{\e}} G(\e x, tu_{n})\, dx + \int_{\Omega_{\e}} G(\e x, tu_{n})\, dx\\
&\leq \frac{t^{2}}{K} \int_{\R^{3}\setminus \Omega_{\e}} V(\e x) u_{n}^{2} dx+ \int_{\Omega_{\e}} F(tu_{n})\, dx\\
&\leq \frac{t^{2}}{K} \|u_{n}\|_{\e}^{2} + C_{1}t^{4} \int_{\Omega_{\e}} (u_{n}^{+})^{4} dx+ C_{2} t^{q} \int_{\Omega_{\e}} (u_{n}^{+})^{q} dx\\
&\leq \frac{t^{2}}{K}+ C_{1}' t^{4} {\rm dist}(u_{n}, \partial \mathcal{S}_{\e}^{+})^{4} + C_{2}' {\rm dist}(u_{n}, \partial \mathcal{S}_{\e}^{+})^{q}
\end{align*}
from which, for all $t>0$ 
\begin{equation}\label{ter2}
\limsup_{n\rightarrow \infty} \int_{\R^{3}} G(\e x, tu_{n})\, dx \leq \frac{t^{2}}{K}. 
\end{equation}
Taking in mind the definitions of $m_{\e}(u_{n})$ and $\widehat{M}(t)$, and by using \eqref{ter2} and assumption $(M_{1})$ we have
\begin{align*}
\liminf_{n\rightarrow \infty} \mathcal{J}_{\e}(m_{\e}(u_{n}))&\geq \liminf_{n\rightarrow \infty} \mathcal{J}_{\e}(m_{\e}(u_{n}))\\
&= \liminf_{n\rightarrow \infty} \left[ \frac{1}{2} \widehat{M}(\|tu_{n}\|_{\e}^{2}) - \int_{\R^{3}} G(\e x, tu_{n})\, dx\right]\\
&\geq \frac{1}{2} \widehat{M}(t^{2}) -\frac{t^{2}}{K}\\
&\geq \left(\frac{1}{2} m_{0}- \frac{1}{K} \right)t^{2}.
\end{align*}
Recalling that $K>2/m_{0}$ we get
\begin{equation*}
\lim_{n\rightarrow \infty} \mathcal{J}_{\e}(m_{\e}(u_{n}))= \infty. 
\end{equation*}
Moreover, the definition of $\mathcal{J}_{\e}(m_{\e}(u_{n}))$ and \eqref{cc} yield that
\begin{equation*}
\mathcal{J}_{\e}(m_{\e}(u_{n})) \leq \frac{1}{2}\widehat{M}(t^{2}_{u_{n}}) \leq C (t_{u_{n}} + t^{2}_{u_{n}})\quad \mbox{ for each } n\in \mathbb{N}. 
\end{equation*}
which gives $\|m_{\e}(u_{n})\|_{\e}\rightarrow \infty$ as $n\rightarrow \infty$. 
\end{proof}

\noindent
Let us define the maps 
\begin{equation*}
\hat{\psi}_{\e}: \mathcal{H}_{\e}^{+} \rightarrow \R \quad \mbox{ and } \quad \psi_{\e}: \mathbb{S}_{\e}^{+}\rightarrow \R, 
\end{equation*}
by $\hat{\psi}_{\e}(u):= \mathcal{J}_{\e}(\hat{m}_{\e}(u))$ and $\psi_{\e}:=\hat{\psi}_{\e}|_{\mathbb{S}_{\e}^{+}}$. \\
The next result is a consequence of Lemma \ref{lem2.3}. 
\begin{prop}\label{prop2.1}
Assume that the hypotheses $(M_{1})$-$(M_{3})$, $(V_{1})$-$(V_{2})$ and $(f_{1})$-$(f_{4})$ hold true. Then, 
\begin{compactenum}[$(a)$]
\item $\hat{\psi}_{\e} \in C^{1}(\mathcal{H}_{\e}^{+}, \R)$ and 
\begin{equation*}
\langle \hat{\psi}_{\e}'(u), v\rangle = \frac{\|\hat{m}_{\e}(u)\|_{\e}}{\|u\|_{\e}} \langle \mathcal{J}_{\e}'(\hat{m}_{\e}(u)), v\rangle 
\end{equation*}
for every $u\in \mathcal{H}_{\e}^{+}$ and $v\in \mathcal{H}_{\e}$; 
\item $\psi_{\e} \in C^{1}(\mathbb{S}_{\e}^{+}, \R)$ and 
\begin{equation*}
\langle \psi_{\e}'(u), v \rangle = \|m_{\e}(u)\|_{\e} \langle \mathcal{J}_{\e}'(m_{\e}(u)), v\rangle, 
\end{equation*}
for every 
\begin{equation*}
v\in T_{u}\mathbb{S}_{\e}^{+}:= \left \{ v\in \mathcal{H}_{\e} : (u, v)_{\e}=0 \right\}; 
\end{equation*}
\item If $\{u_{n}\}_{n\in \mathbb{N}}$ is a $(PS)_{d}$ sequence for $\psi_{\e}$, then $\{m_{\e}(u_{n})\}_{n\in \mathbb{N}}$ is a $(PS)_{d}$ sequence for $\mathcal{J}_{\e}$. If $\{u_{n}\}_{n\in \mathbb{N}}\subset \N_{\e}$ is a bounded $(PS)_{d}$ sequence for $\mathcal{J}_{\e}$, then $\{m_{\e}^{-1}(u_{n})\}_{n\in \mathbb{N}}$ is a $(PS)_{d}$ sequence for the functional $\psi_{\e}$; 
\item $u$ is a critical point of $\psi_{\e}$ if, and only if, $m_{\e}(u)$ is a nontrivial critical point for $\mathcal{J}_{\e}$. Moreover, the corresponding critical values coincide and 
\begin{equation*}
\inf_{u\in \mathbb{S}_{\e}^{+}} \psi_{\e}(u)= \inf_{u\in \N_{\e}} \mathcal{J}_{\e}(u).  
\end{equation*}
\end{compactenum}
\end{prop}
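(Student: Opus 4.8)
The plan is to derive Proposition \ref{prop2.1} from the abstract scheme of Szulkin \& Weth \cite{SW}, whose structural hypotheses are exactly what Lemma \ref{lem2.3} provides: $\hat{m}_{\e}$ is a continuous retraction of $\mathcal{H}_{\e}^{+}$ onto $\N_{\e}$, its restriction $m_{\e}$ is a homeomorphism $\mathbb{S}_{\e}^{+}\to\N_{\e}$ with inverse $u\mapsto u/\|u\|_{\e}$, the numbers $t_{u}$ are uniformly bounded below and locally bounded above, and both $\|m_{\e}(u_{n})\|_{\e}$ and $\mathcal{J}_{\e}(m_{\e}(u_{n}))$ diverge as $u_{n}\to\partial\mathbb{S}_{\e}^{+}$.

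For part (a) I would fix $u\in\mathcal{H}_{\e}^{+}$ and $v\in\mathcal{H}_{\e}$ and bracket the increment $\hat{\psi}_{\e}(u+\tau v)-\hat{\psi}_{\e}(u)$ for small $\tau>0$. Using that $\hat{\psi}_{\e}(u)=\max_{t>0}\mathcal{J}_{\e}(tu)$ by Lemma \ref{lem2.3}(i), one has
\[
\mathcal{J}_{\e}\big(t_{u}(u+\tau v)\big)-\mathcal{J}_{\e}(t_{u}u)\;\leq\;\hat{\psi}_{\e}(u+\tau v)-\hat{\psi}_{\e}(u)\;\leq\;\mathcal{J}_{\e}\big(t_{u+\tau v}(u+\tau v)\big)-\mathcal{J}_{\e}(t_{u+\tau v}u),
\]
with $t_{u+\tau v}\to t_{u}$ as $\tau\to0$ by Lemma \ref{lem2.3}(iii). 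Applying the mean value theorem to the $C^{1}$ functional $\mathcal{J}_{\e}$ on each side, dividing by $\tau$, and letting $\tau\to0$ (using continuity of $\mathcal{J}_{\e}'$ and of $\tau\mapsto t_{u+\tau v}$), both bounds converge to $t_{u}\langle\mathcal{J}_{\e}'(t_{u}u),v\rangle$; the case $\tau<0$ is symmetric. Hence $\hat{\psi}_{\e}$ is Gateaux differentiable with $\langle\hat{\psi}_{\e}'(u),v\rangle=t_{u}\langle\mathcal{J}_{\e}'(\hat{m}_{\e}(u)),v\rangle=\tfrac{\|\hat{m}_{\e}(u)\|_{\e}}{\|u\|_{\e}}\langle\mathcal{J}_{\e}'(\hat{m}_{\e}(u)),v\rangle$, and since the right-hand side is continuous in $u$ (continuity of $\hat{m}_{\e}$ and $\mathcal{J}_{\e}'$) and bounded linear in $v$, we obtain $\hat{\psi}_{\e}\in C^{1}(\mathcal{H}_{\e}^{+},\R)$. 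Part (b) is then the restriction of this formula to $v\in T_{u}\mathbb{S}_{\e}^{+}$, using $\|\hat{m}_{\e}(u)\|_{\e}/\|u\|_{\e}=t_{u}=\|m_{\e}(u)\|_{\e}$ when $\|u\|_{\e}=1$.

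For part (c), given a $(PS)_{d}$ sequence $\{u_{n}\}\subset\mathbb{S}_{\e}^{+}$ for $\psi_{\e}$, I would first invoke Lemma \ref{lem2.3}(iv) to exclude $\mathrm{dist}(u_{n},\partial\mathbb{S}_{\e}^{+})\to0$, which would force $\psi_{\e}(u_{n})=\mathcal{J}_{\e}(m_{\e}(u_{n}))\to\infty$. With $w_{n}:=m_{\e}(u_{n})\in\N_{\e}$, the bound $\mathcal{J}_{\e}(w_{n})\to d$ together with \eqref{cancan} and $K>2/m_{0}$ shows $\{w_{n}\}$ is bounded, so $\tau\leq t_{u_{n}}=\|w_{n}\|_{\e}\leq C$; then the decomposition $\mathcal{H}_{\e}=T_{u_{n}}\mathbb{S}_{\e}^{+}\oplus\R u_{n}$, the identity $\langle\mathcal{J}_{\e}'(w_{n}),w_{n}\rangle=0$, and formula (b) give $\|\mathcal{J}_{\e}'(w_{n})\|_{\mathcal{H}_{\e}^{*}}\to0$, i.e.\ $\{w_{n}\}$ is $(PS)_{d}$ for $\mathcal{J}_{\e}$. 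The converse is symmetric: for a bounded $(PS)_{d}$ sequence $\{u_{n}\}\subset\N_{\e}$ for $\mathcal{J}_{\e}$ one sets $v_{n}:=u_{n}/\|u_{n}\|_{\e}$ and uses that $\|u_{n}\|_{\e}$ is bounded below on $\N_{\e}$ (from $(M_{1})$, $(g_{1})$, $(g_{2})$ and Theorem \ref{Sembedding}) to conclude $\|\psi_{\e}'(v_{n})\|\to0$ while $\psi_{\e}(v_{n})=\mathcal{J}_{\e}(u_{n})\to d$. Finally, for part (d): if $u$ is critical for $\psi_{\e}$, then by (b) $\mathcal{J}_{\e}'(m_{\e}(u))$ vanishes on $T_{u}\mathbb{S}_{\e}^{+}$, and since $m_{\e}(u)\in\N_{\e}$ it also vanishes on $\R m_{\e}(u)$, hence $\mathcal{J}_{\e}'(m_{\e}(u))=0$ by the direct-sum decomposition; the reverse implication, the coincidence of critical values, and $\inf_{\mathbb{S}_{\e}^{+}}\psi_{\e}=\inf_{\N_{\e}}\mathcal{J}_{\e}$ are all immediate from $\psi_{\e}=\mathcal{J}_{\e}\circ m_{\e}$ and the surjectivity of $m_{\e}$.

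The main obstacle is the differentiability asserted in (a): because $f$ and $M$ are merely continuous, $\N_{\e}$ is not a $C^{1}$ manifold and $t_{u}$ is a priori only a continuous function of $u$, so it cannot be differentiated directly. The way around this is the maximum (envelope) structure $\hat{\psi}_{\e}(u)=\max_{t>0}\mathcal{J}_{\e}(tu)$: freezing $t$ at $t_{u}$ in the lower bound and at $t_{u+\tau v}$ in the upper bound yields two slopes that pinch to the same value as $\tau\to0$, so the non-smooth dependence $u\mapsto t_{u}$ never actually has to be differentiated; everything else reduces to the mean value theorem and the continuity statements in Lemma \ref{lem2.3}.
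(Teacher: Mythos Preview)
Your proposal is correct and follows precisely the approach the paper has in mind: the paper does not give a proof of Proposition~\ref{prop2.1} but simply states that it ``is a consequence of Lemma~\ref{lem2.3}'' (i.e., of the abstract Szulkin--Weth scheme \cite{SW}), and what you have written is exactly the standard verification of that scheme in this setting. In particular, your envelope/squeeze argument for part~(a), the use of Lemma~\ref{lem2.3}(iv) and \eqref{cancan} in part~(c), and the direct-sum decomposition in part~(d) are the expected steps.
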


\begin{remark}\label{rem3}
As in \cite{SW}, we can see that, thanks to the assumptions $(M_1)$-$(M_3)$, the following equalities hold
\begin{align*}
c_{\e}&:=\inf_{u\in \N_{\e}} \mathcal{J}_{\e}(u)=\inf_{u\in \mathcal{H}_{\e}^{+}} \max_{t>0} \mathcal{J}_{\e}(tu)=\inf_{u\in \mathbb{S}_{\e}^{+}} \max_{t>0} \mathcal{J}_{\e}(tu).
\end{align*}
\end{remark}

\noindent
Now, we aim to show that the functional $\mathcal{J}_{\e}$ satisfies the Palais-Smale condition.
Firstly, we prove the following.
\begin{lem}\label{lem2.4}
Let $\{u_{n}\}_{n\in\mathbb{N}}$ be a $(PS)_{d}$ sequence for $\mathcal{J}_{\e}$. Then $\{u_{n}\}_{n\in\mathbb{N}}$ is bounded in $\mathcal{H}_{\e}$.
\end{lem}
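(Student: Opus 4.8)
The plan is to show boundedness of a $(PS)_d$ sequence $\{u_n\}$ by exploiting the combination of the Ambrosetti--Rabinowitz type condition $(g_3)$ and the structural properties $(M_1)$--$(M_3)$ of the Kirchhoff term. First I would write, for $n$ large,
\begin{equation*}
d + 1 + \|u_n\|_{\e} \geq \mathcal{J}_{\e}(u_n) - \frac{1}{\vartheta}\langle \mathcal{J}_{\e}'(u_n), u_n\rangle = \frac{1}{2}\widehat{M}(\|u_n\|_{\e}^{2}) - \frac{1}{\vartheta} M(\|u_n\|_{\e}^{2})\|u_n\|_{\e}^{2} + \frac{1}{\vartheta}\int_{\R^{3}} \bigl[g(\e x, u_n)u_n - \vartheta G(\e x, u_n)\bigr]\, dx.
\end{equation*}
Splitting the integral over $\Omega_{\e}$ and $\R^{3}\setminus \Omega_{\e}$, the part over $\Omega_{\e}$ is nonnegative by $(g_3)$-(i), while on the complement $(g_3)$-(ii) gives $g(\e x, u_n)u_n - \vartheta G(\e x, u_n) \geq -\frac{\vartheta-2}{2}\,\frac{V_0}{K} u_n^{2} \geq -\frac{\vartheta-2}{2}\,\frac{1}{K} V(\e x) u_n^{2}$, so the whole integral term is bounded below by $-\bigl(\frac{\vartheta-2}{2\vartheta}\bigr)\frac{1}{K}\|u_n\|_{\e}^{2}$. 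This is exactly the computation already carried out in the proof of Lemma \ref{lem2.3}$(ii)$, leading via \eqref{ccc}, $(M_1)$ and the elementary estimate $\frac{\vartheta-4}{4\vartheta} M(t)t \geq \frac{\vartheta-4}{4\vartheta} m_0 t$ (using $\vartheta > 4$) to
\begin{equation*}
d + 1 + \|u_n\|_{\e} \geq \left(\frac{\vartheta-2}{2\vartheta}\right)\left(m_0 - \frac{1}{K}\right)\|u_n\|_{\e}^{2}.
\end{equation*}

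Since $K > 2/m_0$ we have $m_0 - \frac{1}{K} > m_0 - \frac{m_0}{2} = \frac{m_0}{2} > 0$, hence the coefficient on the right-hand side is a strictly positive constant independent of $n$. A quadratic grows faster than a linear term, so the inequality $d + 1 + \|u_n\|_{\e} \geq c\,\|u_n\|_{\e}^{2}$ with $c > 0$ forces $\{\|u_n\|_{\e}\}$ to be bounded. This completes the argument.

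The only genuinely delicate point, and the one I would be most careful about, is making sure the Kirchhoff term does not spoil the quadratic lower bound: a priori $\widehat{M}(\|u_n\|_{\e}^{2})$ could grow superquadratically (e.g. like $\|u_n\|_{\e}^{4}$ for $M(t)=m_0+bt$), but then $\frac{1}{\vartheta}M(\|u_n\|_{\e}^{2})\|u_n\|_{\e}^{2}$ would appear to grow at the same rate with a possibly larger coefficient. The inequality \eqref{ccc}, namely $\widehat{M}(t) \geq \frac{M(t)+m_0}{2} t$, is precisely what rescues us: it yields $\frac{1}{2}\widehat{M}(t) - \frac{1}{\vartheta} M(t)t \geq \bigl(\frac{\vartheta-4}{4\vartheta}\bigr) M(t)t + \frac{1}{4} m_0 t$, and here $\frac{\vartheta-4}{4\vartheta} > 0$ because $\vartheta > 4$, so the dangerous top-order term $M(t)t$ enters with a \emph{positive} sign and can simply be discarded, leaving the clean quadratic bound $\frac{1}{4}m_0 t$. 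Thus the condition $\vartheta > 4$ (rather than the usual $\vartheta > 2$) is exactly what is needed to absorb the Kirchhoff nonlinearity, and no further estimates on $M$ beyond $(M_1)$ and $(M_3)$ are required.
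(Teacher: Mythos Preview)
Your proposal is correct and follows essentially the same route as the paper's proof: the paper simply writes $C + \|u_n\|_\e \geq \mathcal{J}_\e(u_n) - \frac{1}{\vartheta}\langle \mathcal{J}_\e'(u_n), u_n\rangle$ and refers back to the estimate \eqref{cancan} from Lemma~\ref{lem2.3}$(ii)$ to obtain the same quadratic lower bound $\bigl(\frac{\vartheta-2}{2\vartheta}\bigr)\bigl(m_0 - \frac{1}{K}\bigr)\|u_n\|_\e^2$. Your write-up supplies the details of that reference explicitly and correctly identifies the role of $\vartheta > 4$ and \eqref{ccc} in handling the Kirchhoff term.
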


\begin{proof}
Let $\{u_{n}\}_{n\in\mathbb{N}}$ be a $(PS)$ sequence at the level $d$, that is 
\begin{equation*}
\mathcal{J}_{\e}(u_{n})\rightarrow d \, \mbox{ and } \, \mathcal{J}_{\e}'(u_{n})\rightarrow 0 \mbox{ in } \mathcal{H}_{\e}^{-1}.
\end{equation*}
Then, arguing as in the proof of Lemma \ref{lem2.3}-$(ii)$ (see formula \eqref{cancan} there), we can see that 
\begin{align*}
C+ \|u_{n}\|_{\e}&\geq \mathcal{J}_{\e}(u_{n})- \frac{1}{\vartheta} \langle \mathcal{J}_{\e}'(u_{n}), u_{n}\rangle\\
&\geq \left( \frac{\vartheta -2}{2\vartheta}\right) \left(m_{0}- \frac{1}{K}\right) \|u_{n}\|_{\e}^{2}. 
\end{align*}
By using the fact that $\vartheta >4$ and $K>2/m_{0}$, we deduce that $\{u_{n}\}_{n\in\mathbb{N}}$ is bounded in $\mathcal{H}_{\e}$.
\end{proof}

\noindent
The next two lemmas are fundamental to obtain compactness of bounded Palais-Smale sequences.
\begin{lem}\label{lem2.5}
Let $\{u_{n}\}_{n\in\mathbb{N}}$ be a $(PS)_{d}$ sequence for $\mathcal{J}_{\e}$. Then, for each $\zeta>0$, there exists $R=R(\zeta)>0$ such that 
\begin{equation*}
\limsup_{n\rightarrow \infty} \left[ \int_{\R^{3}\setminus B_{R}} \,dx \int_{\R^{3}} \frac{|u_{n}(x)- u_{n}(y)|^{2}}{|x-y|^{3+2s}}\,dy + \int_{\R^{3}\setminus B_{R}} V(\e x) u_{n}^{2} \,dx\right] <\zeta. 
\end{equation*}
\end{lem}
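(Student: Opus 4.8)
The plan is to run the classical ``no mass escapes to infinity'' argument (in the spirit of Del Pino--Felmer and \cite{AM}), adapted to the nonlocal Kirchhoff setting. Fix $\e>0$. Since $\Omega$ is bounded, say $\Omega\subset B_{R_{0}}$, we have $\Omega_{\e}\subset B_{R_{0}/\e}$; in particular $\Omega_{\e}$ is bounded. For $R>4R_{0}/\e$ I would pick $\eta_{R}\in C^{\infty}(\R^{3})$ with $0\leq\eta_{R}\leq1$, $\eta_{R}\equiv0$ on $B_{R/2}$, $\eta_{R}\equiv1$ on $\R^{3}\setminus B_{R}$ and $\|\nabla\eta_{R}\|_{L^{\infty}}\leq C/R$, so that $\supp\eta_{R}\cap\Omega_{\e}=\emptyset$. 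A preliminary fact I would record is the cut-off estimate
\[
\iint_{\R^{6}}\frac{|\eta_{R}(x)-\eta_{R}(y)|^{2}}{|x-y|^{3+2s}}\,v^{2}(y)\,dx\,dy\leq\frac{C}{R^{2s}}\,\|v\|_{L^{2}(\R^{3})}^{2},\qquad v\in L^{2}(\R^{3}),
\]
obtained by splitting the inner integral into $\{|x-y|\leq R\}$, where $|\eta_{R}(x)-\eta_{R}(y)|\leq\frac{C}{R}|x-y|$, and $\{|x-y|>R\}$, where $|\eta_{R}(x)-\eta_{R}(y)|\leq2$. Combined with $|\eta_{R}(x)u(x)-\eta_{R}(y)u(y)|^{2}\leq2\eta_{R}(x)^{2}|u(x)-u(y)|^{2}+2u(y)^{2}|\eta_{R}(x)-\eta_{R}(y)|^{2}$ and $(V_{1})$, it gives $\|\eta_{R}u\|_{\e}\leq C\|u\|_{\e}$ with $C$ independent of $R\geq1$.

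By Lemma \ref{lem2.4} the sequence $\{u_{n}\}_{n}$ is bounded in $\mathcal{H}_{\e}$, hence $\{\eta_{R}u_{n}\}_{n}$ is bounded in $\mathcal{H}_{\e}$ uniformly in $R$ and $\langle\mathcal{J}_{\e}'(u_{n}),\eta_{R}u_{n}\rangle=o_{n}(1)$. Testing $\mathcal{J}_{\e}'(u_{n})$ with $\eta_{R}u_{n}$ yields $M(\|u_{n}\|_{\e}^{2})(u_{n},\eta_{R}u_{n})_{\e}=\int_{\R^{3}}g(\e x,u_{n})\eta_{R}u_{n}\,dx+o_{n}(1)$. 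Writing $\eta_{R}(x)u_{n}(x)-\eta_{R}(y)u_{n}(y)=\eta_{R}(x)(u_{n}(x)-u_{n}(y))+u_{n}(y)(\eta_{R}(x)-\eta_{R}(y))$ in the Gagliardo term, using Cauchy--Schwarz and the cut-off estimate (note $\|u_{n}\|_{L^{2}}^{2}\leq V_{0}^{-1}\|u_{n}\|_{\e}^{2}\leq C$), and recalling $\eta_{R}\geq\chi_{\R^{3}\setminus B_{R}}\geq0$, one gets
\[
(u_{n},\eta_{R}u_{n})_{\e}\geq\int_{\R^{3}\setminus B_{R}}dx\int_{\R^{3}}\frac{|u_{n}(x)-u_{n}(y)|^{2}}{|x-y|^{3+2s}}\,dy+\int_{\R^{3}}V(\e x)\eta_{R}u_{n}^{2}\,dx-\frac{C}{R^{s}}.
\]
On $\supp\eta_{R}\subset\R^{3}\setminus\Omega_{\e}$ we have $g(\e x,u_{n})=\tilde{f}(u_{n})$, so by $(g_{3})$-(ii) and $(V_{1})$, $g(\e x,u_{n})u_{n}\leq\frac{V_{0}}{K}u_{n}^{2}\leq\frac{1}{K}V(\e x)u_{n}^{2}$, hence $\int_{\R^{3}}g(\e x,u_{n})\eta_{R}u_{n}\,dx\leq\frac{1}{K}\int_{\R^{3}}V(\e x)\eta_{R}u_{n}^{2}\,dx$.

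Putting these together, bounding $M(\|u_{n}\|_{\e}^{2})\leq C$ via $(M_{3})$ and the boundedness of $\{u_{n}\}$, moving $\frac{1}{K}\int V(\e x)\eta_{R}u_{n}^{2}$ to the left, and using $M(\|u_{n}\|_{\e}^{2})\geq m_{0}$ from $(M_{1})$ together with $\eta_{R}\geq\chi_{\R^{3}\setminus B_{R}}$, I would reach
\[
\Big(m_{0}-\frac{1}{K}\Big)\left[\int_{\R^{3}\setminus B_{R}}dx\int_{\R^{3}}\frac{|u_{n}(x)-u_{n}(y)|^{2}}{|x-y|^{3+2s}}\,dy+\int_{\R^{3}\setminus B_{R}}V(\e x)u_{n}^{2}\,dx\right]\leq\frac{C}{R^{s}}+o_{n}(1).
\]
Since $K>2/m_{0}$ gives $m_{0}-1/K>0$, taking $\limsup_{n\to\infty}$ and then choosing $R=R(\zeta)$ so large that $CR^{-s}<(m_{0}-1/K)\zeta$ proves the claim. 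The step I expect to be most delicate is the control of the nonlocal ``commutator'' arising when $\eta_{R}$ is moved through the fractional bilinear form: one must show it is $O(R^{-s})$ \emph{uniformly in $n$}, while keeping all constants independent of both $n$ and $R$, so that after the absorption --- which crucially uses $M\geq m_{0}$ with $m_{0}>1/K$ --- the right-hand side genuinely vanishes as $R\to\infty$.
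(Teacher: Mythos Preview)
Your argument is correct and follows the same overall strategy as the paper --- test $\mathcal{J}_{\e}'(u_{n})$ against $\eta_{R}u_{n}$, use $(g_{3})$-(ii) on $\supp\eta_{R}\subset\R^{3}\setminus\Omega_{\e}$, and absorb via $m_{0}-1/K>0$ --- but your handling of the nonlocal commutator is genuinely simpler than the paper's. The paper proves
\[
\lim_{R\to\infty}\limsup_{n\to\infty}\iint_{\R^{6}}\frac{|\eta_{R}(x)-\eta_{R}(y)|^{2}}{|x-y|^{3+2s}}\,u_{n}^{2}(y)\,dx\,dy=0
\]
by an elaborate decomposition of $\R^{6}$ into regions $\mathbb{X}^{1}_{R}\cup\mathbb{X}^{2}_{R}\cup\mathbb{X}^{3}_{R}$ with further sub-splittings, together with the strong $L^{2}_{\mathrm{loc}}$ convergence $u_{n}\to u$ and the fact that $u\in L^{2^{*}_{s}}(\R^{3})$. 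Your near/far splitting of the inner integral ($|x-y|\leq R$ versus $|x-y|>R$) gives instead the pointwise-in-$y$ bound $\int_{\R^{3}}|\eta_{R}(x)-\eta_{R}(y)|^{2}|x-y|^{-3-2s}\,dx\leq CR^{-2s}$, hence the quantitative estimate $CR^{-2s}\|v\|_{L^{2}}^{2}$ valid for \emph{any} $v\in L^{2}$, so the cross term is $O(R^{-s})$ uniformly in $n$ and no weak-limit argument is needed. This is both shorter and slightly stronger; the paper's more involved route has the side benefit that the same regional decomposition is reused in the proof of the next lemma.
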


\begin{proof}
For any $R>0$, let $\eta_{R}\in C^{\infty}(\R^{3})$ be such that $\eta_{R}=0$ in $B_{R}$ and $\eta_{R}=1$ in $B_{2R}^{c}$, with $0\leq \eta_{R}\leq 1$ and $|\nabla \eta_{R}|\leq \frac{C}{R}$, where $C$ is a constant independent of $R$. 
Since $\{\eta_{R}u_{n}\}_{n\in \mathbb{N}}$ is bounded in $\mathcal{H}_{\e}$, it follows that $\langle \mathcal{J}_{\e}'(u_{n}), \eta_{R}u_{n}\rangle =o_{n}(1)$, that is
\begin{align*}
&M(\|u_{n}\|_{\e}^{2}) \left[\iint_{\R^{6}} \frac{|u_{n}(x)- u_{n}(y)|^{2}}{|x-y|^{3+2s}} \eta_{R}(x) \,dxdy + \int_{\R^{3}} V(\e x) u_{n}^{2} \eta_{R} \, dx\right] \\
&=o_{n}(1) + \int_{\R^{3}} g(\e x, u_{n}) u_{n}\eta_{R} \, dx - M(\|u_{n}\|_{\e}^{2})\iint_{\R^{6}} \frac{(\eta_{R}(x)- \eta_{R}(y))(u_{n}(x)- u_{n}(y))}{|x-y|^{3+2s}} u_{n}(y) \,dxdy. 
\end{align*}
Take $R>0$ such that $\Omega_{\e}\subset B_{R}$. Then, by using $(M_{1})$ and $(g_{3})$-(ii) we have
\begin{align*}
&m_{0}\left[\iint_{\R^{6}} \frac{|u_{n}(x)- u_{n}(y)|^{2}}{|x-y|^{3+2s}} \eta_{R}(x) \,dxdy + \int_{\R^{3}} V(\e x) u_{n}^{2} \eta_{R} \, dx\right] \\
&\leq \int_{\R^{3}} \frac{1}{K} V(\e x) u_{n}^{2} \eta_{R}\, dx - M(\|u_{n}\|_{\e}^{2})\iint_{\R^{6}} \frac{(\eta_{R}(x)- \eta_{R}(y))(u_{n}(x)- u_{n}(y))}{|x-y|^{3+2s}} u_{n}(y) \,dxdy + o_{n}(1)
\end{align*}
which gives 
\begin{align}\label{ter3}
\left(m_{0}- \frac{1}{K}\right)& \left[ \iint_{\R^{6}} \frac{|u_{n}(x)- u_{n}(y)|^{2}}{|x-y|^{3+2s}} \eta_{R}(x) \,dxdy + \int_{\R^{3}} V(\e x) u_{n}^{2} \eta_{R} \, dx\right] \nonumber \\
&\leq - M(\|u_{n}\|_{\e}^{2})\iint_{\R^{6}} \frac{(\eta_{R}(x)- \eta_{R}(y))(u_{n}(x)- u_{n}(y))}{|x-y|^{3+2s}} u_{n}(y) \,dxdy + o_{n}(1).
\end{align}
Let us observe that the boundedness of $\{u_{n}\}_{n\in \mathbb{N}}$ in $\mathcal{H}_{\e}$ and the assumption $(M_{2})$, imply
\begin{equation}\label{ter4}
M(\|u_{n}\|_{\e}^{2})\leq C \mbox{ for any } n\in \mathbb{N}.
\end{equation}
Now we claim that 
\begin{equation}\label{ter5}
\lim_{R\rightarrow \infty} \limsup_{n\rightarrow \infty} \iint_{\R^{6}} \frac{(\eta_{R}(x)- \eta_{R}(y))(u_{n}(x)- u_{n}(y))}{|x-y|^{3+2s}} u_{n}(y) \,dxdy=0.
\end{equation}
Since
\begin{align*}
&\left|\iint_{\R^{6}} \frac{(\eta_{R}(x)- \eta_{R}(y))(u_{n}(x)- u_{n}(y))}{|x-y|^{3+2s}} u_{n}(y) \,dxdy\right|\nonumber \\
&\leq \left( \iint_{\R^{6}} \frac{|u_{n}(x)- u_{n}(y)|^{2}}{|x-y|^{3+2s}}\,dxdy\right)^{\frac{1}{2}}\left( \iint_{\R^{6}} \frac{|\eta_{R}(x)- \eta_{R}(y)|^{2}}{|x-y|^{3+2s}} u_{n}^{2}(y)\,dxdy\right)^{\frac{1}{2}}\nonumber\\
&\leq C \left( \iint_{\R^{6}} \frac{|\eta_{R}(x)- \eta_{R}(y)|^{2}}{|x-y|^{3+2s}} u_{n}^{2}(y)\,dxdy\right)^{\frac{1}{2}}
\end{align*}
it is enough to show 
\begin{equation*}
\lim_{R\rightarrow \infty} \limsup_{n\rightarrow \infty} \iint_{\R^{6}} \frac{|\eta_{R}(x)- \eta_{R}(y)|^{2}}{|x-y|^{3+2s}} u_{n}^{2}(y)\,dxdy=0
\end{equation*}
to deduce that \eqref{ter5} holds. \\
Firstly, we note that $\R^{6}$ can be written as 
$$
\R^{6}=((\R^{3}\setminus B_{2R})\times (\R^{3}\setminus B_{2R})) \cup ((\R^{3}\setminus B_{2R})\times B_{2R})\cup (B_{2R}\times \R^{3})=: \mathbb{X}^{1}_{R}\cup \mathbb{X}^{2}_{R} \cup \mathbb{X}^{3}_{R}.
$$
Then
\begin{align}\label{Pa1}
\iint_{\R^{6}} &\frac{|\eta_{R}(x)-\eta_{R}(y)|^{2}}{|x-y|^{3+2s}} u^{2}_{n}(x) dx dy =\iint_{\mathbb{X}^{1}_{R}}\frac{|\eta_{R}(x)-\eta_{R}(y)|^{2}}{|x-y|^{3+2s}} u^{2}_{n}(x) dx dy \nonumber \\
&+\iint_{\mathbb{X}^{2}_{R}}\frac{|\eta_{R}(x)-\eta_{R}(y)|^{2}}{|x-y|^{3+2s}} u^{2}_{n}(x) dx dy+
\iint_{\mathbb{X}^{3}_{R}}\frac{|\eta_{R}(x)-\eta_{R}(y)|^{2}}{|x-y|^{3+2s}} u^{2}_{n}(x) dx dy.
\end{align}
Now, we estimate each integrals in (\ref{Pa1}).
Since $\eta_{R}=1$ in $\R^{3}\setminus B_{2R}$, we have
\begin{align}\label{Pa2}
\iint_{\mathbb{X}^{1}_{R}}\frac{|u_{n}(x)|^{2}|\eta_{R}(x)-\eta_{R}(y)|^{2}}{|x-y|^{3+2s}} dx dy=0.
\end{align}
Let $k>4$. Clearly, we have
\begin{equation*}
\mathbb{X}^{2}_{R}=(\R^{3} \setminus B_{2R})\times B_{2R} = ((\R^{6}\setminus B_{kR})\times B_{2R})\cup ((B_{kR}\setminus B_{2R})\times B_{2R}). 
\end{equation*}
Let us observe that, if $(x, y) \in (\R^{6}\setminus B_{kR})\times B_{2R}$, then
\begin{equation*}
|x-y|\geq |x|-|y|\geq |x|-2R>\frac{|x|}{2}. 
\end{equation*}
Therefore, taking into account that $0\leq \eta_{R}\leq 1$, $|\nabla \eta_{R}|\leq \frac{C}{R}$ and applying H\"older inequality, we can see
\begin{align}\label{Pa3}
&\iint_{\mathbb{X}^{2}_{R}}\frac{|u_{n}(x)|^{2}|\eta_{R}(x)-\eta_{R}(y)|^{2}}{|x-y|^{3+2s}} dx dy \nonumber \\
&=\int_{\R^{3}\setminus B_{kR}} dx \int_{B_{2R}} \frac{|u_{n}(x)|^{2}|\eta_{R}(x)-\eta_{R}(y)|^{2}}{|x-y|^{3+2s}} dy + \int_{B_{kR}\setminus B_{2R}} dx  \int_{B_{2R}} \frac{|u_{n}(x)|^{2}|\eta_{R}(x)-\eta_{R}(y)|^{2}}{|x-y|^{3+2s}} dy \nonumber \\
&\leq 2^{2+3+2s} \int_{\R^{3}\setminus B_{kR}} dx  \int_{B_{2R}} \frac{|u_{n}(x)|^{2}}{|x|^{3+2s}}\, dy+ \frac{C}{R^{2}} \int_{B_{kR}\setminus B_{2R}} dx  \int_{B_{2R}} \frac{|u_{n}(x)|^{2}}{|x-y|^{3+2(s-1)}}\, dy \nonumber \\
&\leq CR^{3} \int_{\R^{3}\setminus B_{kR}} \frac{|u_{n}(x)|^{2}}{|x|^{3+2s}}\, dx + \frac{C}{R^{2}} (kR)^{2(1-s)} \int_{B_{kR}\setminus B_{2R}} |u_{n}(x)|^{2} dx \nonumber \\
&\leq CR^{3} \left( \int_{\R^{3}\setminus B_{kR}} |u_{n}(x)|^{2^{*}_{s}} dx \right)^{\frac{2}{2^{*}_{s}}} \left(\int_{\R^{3}\setminus B_{kR}}\frac{1}{|x|^{\frac{3^{2}}{2s} +3}}\, dx \right)^{\frac{2s}{3}} + \frac{C k^{2(1-s)}}{R^{2s}} \int_{B_{kR}\setminus B_{2R}} |u_{n}(x)|^{2} dx \nonumber \\
&\leq \frac{C}{k^{3}} \left( \int_{\R^{3}\setminus B_{kR}} |u_{n}(x)|^{2^{*}_{s}} dx \right)^{\frac{2}{2^{*}_{s}}} + \frac{C k^{2(1-s)}}{R^{2s}} \int_{B_{kR}\setminus B_{2R}} |u_{n}(x)|^{2} dx \nonumber \\
&\leq \frac{C}{k^{3}}+ \frac{C k^{2(1-s)}}{R^{2s}} \int_{B_{kR}\setminus B_{2R}} |u_{n}(x)|^{2} dx.
\end{align}

\noindent
Now, we fix $\delta \in (0,1)$, and we note that
\begin{align}\label{TerV1}
&\iint_{\mathbb{X}^{3}_{R}} \frac{|u_{n}(x)|^{2} |\eta_{R}(x)- \eta_{R}(y)|^{2}}{|x-y|^{3+2s}}\, dxdy \nonumber\\
&= \int_{B_{2R}\setminus B_{\delta R}}dx  \int_{\R^{3}} \frac{|u_{n}(x)|^{2} |\eta_{R}(x)- \eta_{R}(y)|^{2}}{|x-y|^{3+2s}}\, dy + \int_{B_{\delta R}} dx \int_{\R^{3}} \frac{|u_{n}(x)|^{2} |\eta_{R}(x)- \eta_{R}(y)|^{2}}{|x-y|^{3+2s}}\, dy. 
\end{align}
Let us estimate the first integral in \eqref{TerV1}. Then, 
\begin{align*}
\int_{B_{2R}\setminus B_{\delta R}} dx \int_{\R^{3} \cap \{y: |x-y|<R\}} \frac{|u_{n}(x)|^{2} |\eta_{R}(x)- \eta_{R}(y)|^{2}}{|x-y|^{3+2s}}\, dy \leq \frac{C}{R^{2s}} \int_{B_{2R}\setminus B_{\delta R}} |u_{n}(x)|^{2} dx
\end{align*}
and 
\begin{align*}
\int_{B_{2R}\setminus B_{\delta R}} dx \int_{\R^{3} \cap \{y: |x-y|\geq R\}} \frac{|u_{n}(x)|^{2} |\eta_{R}(x)- \eta_{R}(y)|^{2}}{|x-y|^{3+2s}}\, dy \leq \frac{C}{R^{2s}} \int_{B_{2R}\setminus B_{\delta R}} |u_{n}(x)|^{2} dx
\end{align*}
from which we have
\begin{align}\label{TerV2}
\int_{B_{2R}\setminus B_{\delta R}} dx \int_{\R^{3}} \frac{|u_{n}(x)|^{2} |\eta_{R}(x)- \eta_{R}(y)|^{2}}{|x-y|^{3+2s}}\, dy \leq \frac{C}{R^{2s}} \int_{B_{2R}\setminus B_{\delta R}} |u_{n}(x)|^{2} dx. 
\end{align}
Now, by using the definition of $\eta_{R}$, $\delta \in (0,1)$, and $0\leq\eta_{R}\leq 1$, we have 
\begin{align}\label{TerV3}
\int_{B_{\delta R}} dx \int_{\R^{3}} \frac{|u_{n}(x)|^{2} |\eta_{R}(x)- \eta_{R}(y)|^{2}}{|x-y|^{3+2s}}\, dy &= \int_{B_{\delta R}} dx \int_{\R^{3}\setminus B_{R}} \frac{|u_{n}(x)|^{2} |\eta_{R}(x)- \eta_{R}(y)|^{2}}{|x-y|^{3+2s}}\, dy\nonumber \\
&\leq 4 \int_{B_{\delta R}} dx \int_{\R^{3}\setminus B_{R}} \frac{|u_{n}(x)|^{2}}{|x-y|^{3+2s}}\, dy\nonumber \\
&\leq C \int_{B_{\delta R}} |u_{n}|^{2} dx \int_{(1-\delta)R}^{\infty} \frac{1}{r^{1+2s}} dr\nonumber \\
&=\frac{C}{[(1-\delta)R]^{2s}} \int_{B_{\delta R}} |u_{n}|^{2} dx
\end{align}
where we have used the fact that if $(x, y) \in B_{\delta R}\times (\R^{3} \setminus B_{R})$, then $|x-y|>(1-\delta)R$. \\
Taking into account \eqref{TerV1}, \eqref{TerV2} and \eqref{TerV3} we deduce 
\begin{align}\label{Pa4}
\iint_{\mathbb{X}^{3}_{R}} &\frac{|u_{n}(x)|^{2} |\eta_{R}(x)- \eta_{R}(y)|^{2}}{|x-y|^{3+2s}}\, dxdy \nonumber \\
&\leq \frac{C}{R^{2s}} \int_{B_{2R}\setminus B_{\delta R}} |u_{n}(x)|^{p} dx + \frac{C}{[(1-\delta)R]^{2s}} \int_{B_{\delta R}} |u_{n}(x)|^{2} dx. 
\end{align}
Putting together \eqref{Pa1}, \eqref{Pa2}, \eqref{Pa3} and \eqref{Pa4}, we can infer 
\begin{align}\label{Pa5}
\iint_{\R^{6}} \frac{|u_{n}(x)|^{2} |\eta_{R}(x)- \eta_{R}(y)|^{2}}{|x-y|^{3+2s}}\, dxdy &\leq \frac{C}{k^{3}} + \frac{Ck^{2(1-s)}}{R^{2s}} \int_{B_{kR}\setminus B_{2R}} |u_{n}(x)|^{2} dx \nonumber\\
&+ \frac{C}{R^{2s}} \int_{B_{2R}\setminus B_{\delta R}} |u_{n}(x)|^{2} dx + \frac{C}{[(1-\delta)R]^{2s}}\int_{B_{\delta R}} |u_{n}(x)|^{2} dx. 
\end{align}
Since $\{u_{n}\}_{n\in \mathbb{N}}$ is bounded in $H^{s}(\R^{3})$, by using Theorem \ref{Sembedding}, we may assume that $u_{n}\rightarrow u$ in $L^{2}_{loc}(\R^{3})$ for some $u\in H^{s}(\R^{3})$. Then, taking the limit as $n\rightarrow \infty$ in \eqref{Pa5}, we have
\begin{align*}
&\limsup_{n\rightarrow \infty} \iint_{\R^{6}} \frac{|u_{n}(x)|^{2} |\eta_{R}(x)- \eta_{R}(y)|^{2}}{|x-y|^{3+2s}}\, dxdy\\
&\leq \frac{C}{k^{3}} + \frac{Ck^{2(1-s)}}{R^{2s}} \int_{B_{kR}\setminus B_{2R}} |u(x)|^{2} dx + \frac{C}{R^{2s}} \int_{B_{2R}\setminus B_{\delta R}} |u(x)|^{2} dx + \frac{C}{[(1-\delta)R]^{2s}}\int_{B_{\delta R}} |u(x)|^{2} dx \\
&\leq \frac{C}{k^{3}} + Ck^{2} \left( \int_{B_{kR}\setminus B_{2R}} |u(x)|^{2^{*}_{s}} dx\right)^{\frac{2}{2^{*}_{s}}} + C\left(\int_{B_{2R}\setminus B_{\delta R}} |u(x)|^{2^{*}_{s}} dx\right)^{\frac{2}{2^{*}_{s}}} + C\left( \frac{\delta}{1-\delta}\right)^{2s} \left(\int_{B_{\delta R}} |u(x)|^{2^{*}_{s}} dx\right)^{\frac{2}{2^{*}_{s}}}, 
\end{align*}
where in the last passage we have used the H\"older inequality. \\
Since $u\in L^{2^{*}_{s}}(\R^{3})$, $k>4$ and $\delta \in (0,1)$, we obtain
\begin{align*}
\limsup_{R\rightarrow \infty} \int_{B_{kR}\setminus B_{2R}} |u(x)|^{2^{*}_{s}} dx = \limsup_{R\rightarrow \infty} \int_{B_{2R}\setminus B_{\delta R}} |u(x)|^{2^{*}_{s}} dx = 0. 
\end{align*}
Choosing $\delta= \frac{1}{k}$, we get
\begin{align*}
&\limsup_{R\rightarrow \infty} \limsup_{n\rightarrow \infty} \iint_{\R^{6}} \frac{|u_{n}(x)|^{2} |\eta_{R}(x)- \eta_{R}(y)|^{2}}{|x-y|^{3+2s}}\, dxdy\\
&\quad \leq \lim_{k\rightarrow \infty} \limsup_{R\rightarrow \infty} \Bigl[\, \frac{C}{k^{3}} + Ck^{2} \left( \int_{B_{kR}\setminus B_{2R}} |u(x)|^{2^{*}_{s}} dx\right)^{\frac{2}{2^{*}_{s}}} + C\left(\int_{B_{2R}\setminus B_{\frac{1}{k} R}} |u(x)|^{2^{*}_{s}} dx\right)^{\frac{2}{2^{*}_{s}}} \\
&\quad \quad + C\left(\frac{1}{k-1}\right)^{2s} \left(\int_{B_{\frac{1}{k} R}} |u(x)|^{2^{*}_{s}} dx\right)^{\frac{2}{2^{*}_{s}}}\, \Bigr]\\
&\quad\leq \lim_{k\rightarrow \infty} \frac{C}{k^{3}} + C\left(\frac{1}{k-1}\right)^{2s} \left(\int_{\R^{3}} |u(x)|^{2^{*}_{s}} dx \right)^{\frac{2}{2^{*}_{s}}}= 0
\end{align*}
Putting together \eqref{ter3}, \eqref{ter4}, \eqref{ter5} and by using the definition of $\eta_{R}$, we deduce that
\begin{equation*}
\lim_{R\rightarrow \infty} \limsup_{n\rightarrow \infty} \int_{\R^{3}\setminus B_{R}} \,dx \int_{\R^{3}} \frac{|u_{n}(x)- u_{n}(y)|^{2}}{|x-y|^{3+2s}}\,dy + \int_{\R^{3}\setminus B_{R}} V(\e x) u_{n}^{2} \,dx =0. 
\end{equation*}
This ends the proof of the Lemma. 
\end{proof}

\begin{lem}\label{lem2.6}
Let $\{u_{n}\}_{n\in\mathbb{N}}$ be a $(PS)_{d}$ sequence for $\mathcal{J}_{\e}$ such that $u_{n}\rightharpoonup u$. Then, for all $R>0$
\begin{align*}
\lim_{n\rightarrow \infty} &\int_{B_{R}} dx \int_{\R^{3}} \frac{|u_{n}(x)-u_{n}(y)|^{2}}{|x-y|^{3+2s}} \, dy + \int_{B_{R}} V(\e x) u_{n}^{2} dx\\
&= \int_{B_{R}} dx \int_{\R^{3}} \frac{|u(x)-u(y)|^{2}}{|x-y|^{3+2s}} \, dy + \int_{B_{R}} V(\e x) u^{2} dx.  
\end{align*}
\end{lem}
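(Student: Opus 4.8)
The plan is to test the Euler--Lagrange equation along the $(PS)_d$ sequence against a localised multiple of $u_n-u$ and to read off from the resulting identity the convergence of the ``Gagliardo density'' on $B_R$. First I recall that, by Lemma~\ref{lem2.4}, $\{u_n\}$ is bounded in $\mathcal{H}_\e$, hence in $H^{s}(\R^{3})$ because $V\geq V_0>0$; therefore, by Theorem~\ref{Sembedding}, $u_n\to u$ in $L^{p}_{loc}(\R^{3})$ for every $p\in[1,2^{*}_{s})$, and by $(M_1)$--$(M_2)$ together with the boundedness we have $m_0\leq M(\|u_n\|_\e^{2})\leq C$ for all $n$. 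Fix $R>0$ and choose a cut-off $\varphi_R\in C^{\infty}_{c}(\R^{3})$ with $0\leq\varphi_R\leq1$, $\varphi_R\equiv1$ on $B_R$ and $\supp\varphi_R\subset B_{2R}$. Writing $v_n:=u_n-u$, the estimate $[\varphi_R w]^{2}\leq C(\|w\|_{L^{2}(\R^{3})}^{2}+[w]^{2})$ (of the type established in the proof of Lemma~\ref{lem2.5}) shows that $\{\varphi_R v_n\}$ is bounded in $\mathcal{H}_\e$, so that $\langle\mathcal{J}_\e'(u_n),\varphi_R v_n\rangle=o_n(1)$.

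Expanding this identity, I would dispose of the lower-order terms. The term $\int_{\R^{3}}V(\e x)\,u_n\,\varphi_R v_n\,dx$ tends to $0$, since $V(\e\cdot)u_n$ is bounded in $L^{2}(B_{2R})$ while $\varphi_R v_n\to0$ in $L^{2}(B_{2R})$. The term $\int_{\R^{3}}g(\e x,u_n)\varphi_R v_n\,dx$ tends to $0$ as well: $(g_1)$--$(g_2)$ yield $|g(\e x,t)|\leq\xi|t|^{3}+C_\xi|t|^{q-1}$ with $4<q<2^{*}_{s}$ (here the restriction $s>\tfrac34$ is used, so that $2^{*}_{s}>4$), whence by H\"older
\begin{equation*}
\int_{B_{2R}}|g(\e x,u_n)|\,|\varphi_R v_n|\,dx\leq\xi\,\|u_n\|_{L^{4}(B_{2R})}^{3}\,\|\varphi_R v_n\|_{L^{4}(B_{2R})}+C_\xi\,\|u_n\|_{L^{q}(B_{2R})}^{q-1}\,\|\varphi_R v_n\|_{L^{q}(B_{2R})}\longrightarrow0 .
\end{equation*}
Since also $0<m_0\leq M(\|u_n\|_\e^{2})\leq C$, it follows that
\begin{equation*}
B_n:=\iint_{\R^{6}}\frac{\bigl(u_n(x)-u_n(y)\bigr)\bigl[(\varphi_R v_n)(x)-(\varphi_R v_n)(y)\bigr]}{|x-y|^{3+2s}}\,dxdy\longrightarrow0 .
\end{equation*}

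The core step is to split $B_n$ by means of $(\varphi_R v_n)(x)-(\varphi_R v_n)(y)=\varphi_R(x)\bigl(v_n(x)-v_n(y)\bigr)+v_n(y)\bigl(\varphi_R(x)-\varphi_R(y)\bigr)$, which gives $B_n=B_n^{1}+B_n^{2}$ with
\begin{equation*}
B_n^{1}:=\iint_{\R^{6}}\frac{\bigl(u_n(x)-u_n(y)\bigr)\bigl(v_n(x)-v_n(y)\bigr)}{|x-y|^{3+2s}}\,\varphi_R(x)\,dxdy,\qquad
B_n^{2}:=\iint_{\R^{6}}\frac{\bigl(u_n(x)-u_n(y)\bigr)\,v_n(y)\,\bigl(\varphi_R(x)-\varphi_R(y)\bigr)}{|x-y|^{3+2s}}\,dxdy .
\end{equation*}
By Cauchy--Schwarz and the uniform bound on $[u_n]$ one has $|B_n^{2}|\leq C\bigl(\int_{\R^{3}}v_n^{2}(y)\,h_R(y)\,dy\bigr)^{1/2}$, where $h_R(y):=\int_{\R^{3}}|\varphi_R(x)-\varphi_R(y)|^{2}|x-y|^{-3-2s}\,dx$. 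Splitting the $x$-integral into $\{|x-y|<1\}$ and $\{|x-y|\geq1\}$ and using that $\varphi_R$ is bounded and Lipschitz gives $h_R\in L^{\infty}(\R^{3})$, while the fact that $\varphi_R(y)=0$ for $|y|$ large gives $h_R(y)\to0$ as $|y|\to\infty$; combining these two facts with $v_n\to0$ in $L^{2}_{loc}(\R^{3})$ and $\|v_n\|_{L^{2}(\R^{3})}\leq C$ (split $\int v_n^{2}h_R$ over a large ball and its complement) yields $\int_{\R^{3}}v_n^{2}h_R\,dx\to0$, hence $B_n^{2}\to0$ and so $B_n^{1}\to0$. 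Inserting $u_n=u+v_n$ in $B_n^{1}$, the mixed term $\iint\frac{(u(x)-u(y))(v_n(x)-v_n(y))}{|x-y|^{3+2s}}\varphi_R(x)\,dxdy$ is a bounded linear functional of $v_n$ on $H^{s}(\R^{3})$ (dominated by $[u]\,[v_n]$), hence tends to $0$ along $v_n\rightharpoonup0$; therefore $\iint_{\R^{6}}\frac{|v_n(x)-v_n(y)|^{2}}{|x-y|^{3+2s}}\varphi_R(x)\,dxdy\to0$, and since $\varphi_R\geq\chi_{B_R}$ we conclude $\int_{B_R}dx\int_{\R^{3}}\frac{|v_n(x)-v_n(y)|^{2}}{|x-y|^{3+2s}}\,dy\to0$.

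Finally, expanding $u_n=u+v_n$ in $\int_{B_R}dx\int_{\R^{3}}\frac{|u_n(x)-u_n(y)|^{2}}{|x-y|^{3+2s}}\,dy$, the pure-$v_n$ contribution vanishes by the previous step, the mixed contribution is once more a bounded linear functional of $v_n$ on $H^{s}(\R^{3})$ and vanishes, and we are left exactly with $\int_{B_R}dx\int_{\R^{3}}\frac{|u(x)-u(y)|^{2}}{|x-y|^{3+2s}}\,dy$; moreover $\int_{B_R}V(\e x)u_n^{2}\,dx\to\int_{B_R}V(\e x)u^{2}\,dx$ since $u_n\to u$ in $L^{2}(B_R)$ and $V$ is bounded on $\overline{B_R}$. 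Adding the two limits gives the statement. I expect the delicate point to be the treatment of $B_n^{2}$: one has to notice that, $R$ being fixed, the kernel $h_R$ decays at infinity, so that the local strong convergence of $v_n$ together with its global $L^{2}$-boundedness is enough --- no uniform tail estimate in the spirit of Lemma~\ref{lem2.5} is required here.
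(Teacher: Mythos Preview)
Your proof is correct and somewhat more economical than the paper's. The paper proceeds by testing $\mathcal{J}_{\e}'(u_{n})$ separately against $u_{n}\eta_{\rho}$ and $u\eta_{\rho}$ for a cut--off $\eta_{\rho}$ with $\rho>R$, reorganises the resulting identities into four pieces $I_{n,\rho},\dots,IV_{n,\rho}$, and then takes a \emph{double} limit $n\to\infty$ followed by $\rho\to\infty$; the commutator contributions are controlled by invoking verbatim the rather lengthy estimate established in the proof of Lemma~\ref{lem2.5} (see \eqref{ter5} there). By contrast, you test directly against $\varphi_{R}(u_{n}-u)$ with $R$ fixed and take a single limit in $n$; your commutator term $B_{n}^{2}$ is handled by the simple observation that the kernel $h_{R}(y)=\int_{\R^{3}}|\varphi_{R}(x)-\varphi_{R}(y)|^{2}|x-y|^{-3-2s}\,dx$ is bounded and vanishes at infinity, so that $\int v_{n}^{2}h_{R}\to0$ follows from $v_{n}\to0$ in $L^{2}_{loc}$ together with the global $L^{2}$--bound on $v_{n}$. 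This is more self--contained and avoids the double limit altogether. The only point to be careful about---and you address it correctly---is that the mixed bilinear forms $\iint\frac{(u(x)-u(y))(v_{n}(x)-v_{n}(y))}{|x-y|^{3+2s}}\varphi_{R}(x)\,dxdy$ and its $\chi_{B_{R}}$--variant are genuine bounded linear functionals of $v_{n}$ on $H^{s}(\R^{3})$, so that weak convergence $v_{n}\rightharpoonup0$ disposes of them.
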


\begin{proof}
By Lemma \ref{lem2.4} we know that $\{u_{n}\}_{n\in\mathbb{N}}$ is bounded, so we may assume that $u_{n}\rightharpoonup u$ and
$\|u_{n}\|_{\e}\rightarrow t_{0}\geq 0$. Then, by the weak lower semicontinuity $\|u\|_{\e}\leq t_{0}$. \\
Let $\eta_{\rho}\in C^{\infty}(\R^{3})$ be such that $\eta_{\rho}=1$ in $B_{\rho}$ and $\eta_{\rho}=0$ in $B_{2\rho}^{c}$, with $0\leq \eta_{\rho}\leq 1$. 
Fix $R>0$ and choose $\rho>R$. Then we have 
\begin{align}\label{Phi}
&M(\|u_{n}\|_{\e}^{2})\left[\int_{B_{R}} dx \int_{\R^{3}} \frac{|(u_{n}(x) - u_{n}(y))- (u(x)- u(y))|^{2}}{|x-y|^{3+2s}} \, dy + \int_{B_{R}} V(\e x) (u_{n}-u)^{2} dx\right] \nonumber \\
&\leq M(\|u_{n}\|_{\e}^{2})\left[ \iint_{\R^{6}} \frac{|(u_{n}(x) - u_{n}(y))- (u(x)- u(y))|^{2}}{|x-y|^{3+2s}} \eta_{\rho}(x)\, dxdy + \int_{\R^{3}} V(\e x) (u_{n}- u)^{2}\eta_{\rho}(x)\,dx\right] \nonumber \\
&\leq M(\|u_{n}\|_{\e}^{2})\left[ \iint_{\R^{6}} \frac{|u_{n}(x) - u_{n}(y)|^{2}}{|x-y|^{3+2s}} \eta_{\rho}(x)\, dxdy + \int_{\R^{3}} V(\e x) u_{n}^{2}\eta_{\rho}(x)\,dx\right] \nonumber \\
&+ M(\|u_{n}\|_{\e}^{2})\left[ \iint_{\R^{6}} \frac{|u(x) - u(y)|^{2}}{|x-y|^{3+2s}} \eta_{\rho}(x)\, dxdy + \int_{\R^{3}} V(\e x) u^{2}\eta_{\rho}(x)\,dx\right] \nonumber \\
&-2M(\|u_{n}\|_{\e}^{2})\left[ \iint_{\R^{6}} \frac{(u_{n}(x) - u_{n}(y))(u(x)-u(y))}{|x-y|^{3+2s}} \eta_{\rho}(x)\, dxdy + \int_{\R^{3}} V(\e x) u_{n}\,u\,\eta_{\rho}(x)\,dx\right] \nonumber \\
&=I_{n,\rho}-II_{n,\rho}+III_{n,\rho}+IV_{n,\rho}\leq |I_{n,\rho}|+ |II_{n,\rho}|+ |III_{n,\rho}|+ |IV_{n,\rho}|
\end{align}
where 
\begin{align*}
&I_{n,\rho}\!\!:=\!\!M(\|u_{n}\|_{\e}^{2})\left[ \iint_{\R^{6}} \frac{|u_{n}(x) - u_{n}(y)|^{2}}{|x-y|^{3+2s}} \eta_{\rho}(x)\, dxdy + \int_{\R^{3}} V(\e x) u_{n}^{2}\eta_{\rho}(x)\,dx\right]- \int_{\R^{3}} g(\e x, u_{n})\, u_{n}\, \eta_{\rho}\, dx,\\
&II_{n,\rho}\!\!:=\!\!M(\|u_{n}\|_{\e}^{2})\left[ \iint_{\R^{6}} \frac{(u_{n}(x) - u_{n}(y))(u(x)-u(y))}{|x-y|^{3+2s}} \eta_{\rho}(x)\, dxdy + \int_{\R^{3}} V(\e x) u_{n}u \eta_{\rho}(x)\,dx\right]\\
&\quad \quad- \int_{\R^{3}} g(\e x, u_{n}) u \eta_{\rho}\, dx,\\
&III_{n,\rho}:=M(\|u_{n}\|_{\e}^{2})\left[ \iint_{\R^{6}} \frac{(u_{n}(x) - u_{n}(y))(u(x)-u(y))}{|x-y|^{3+2s}} \eta_{\rho}(x) dxdy + \int_{\R^{3}} V(\e x) u_{n}u \eta_{\rho}(x)\,dx\right]\\
&\quad \quad+M(\|u_{n}\|_{\e}^{2})\left[ \iint_{\R^{6}} \frac{|u(x) - u(y)|^{2}}{|x-y|^{3+2s}} \eta_{\rho}(x)\, dxdy + \int_{\R^{3}} V(\e x) u^{2}\eta_{\rho}(x)\,dx\right],\\
&IV_{n,\rho}:=\int_{\R^{3}} g(\e x, u_{n})\, u_{n}\, \eta_{\rho}\, dx- \int_{\R^{3}} g(\e x, u_{n})\, u\, \eta_{\rho}\, dx.
\end{align*}
Let us prove that 
\begin{equation}\label{I0}
\lim_{\rho \rightarrow \infty} \limsup_{n\rightarrow \infty} |I_{n,\rho}|=0. 
\end{equation}
Firstly, let us observe that $I_{n,\rho}$ can be written as
\begin{align*}
I_{n,\rho}=\langle \mathcal{J}_{\e}'(u_{n}), u_{n}\eta_{\rho} \rangle - M(\|u_{n}\|_{\e}^{2}) \iint_{\R^{6}} \frac{(u_{n}(x)- u_{n}(y))(\eta_{\rho}(x)- \eta_{\rho}(y))}{|x-y|^{3+2s}}u_{n}(y)\, dxdy.
\end{align*}
Since $\{u_{n}\eta_{\rho}\}_{n\in \mathbb{N}}$ is bounded in $\mathcal{H}_{\e}$, we have $\langle \mathcal{J}_{\e}'(u_{n}), u_{n}\eta_{\rho} \rangle=o_{n}(1)$, so 
\begin{equation}\label{vin1}
I_{n, \rho}= o_{n}(1)-M(\|u_{n}\|_{\e}^{2}) \iint_{\R^{6}} \frac{(u_{n}(x)- u_{n}(y))(\eta_{\rho}(x)- \eta_{\rho}(y))}{|x-y|^{3+2s}}u_{n}(y)\, dxdy.
\end{equation}
Arguing as in the proof of \eqref{ter5} (with $\eta_{R}= 1-\eta_{\rho}$), we can infer that
\begin{equation*}
\lim_{\rho \rightarrow \infty} \limsup_{n\rightarrow \infty} \left|M(\|u_{n}\|_{\e}^{2}) \iint_{\R^{6}} \frac{(u_{n}(x)- u_{n}(y))(\eta_{\rho}(x)- \eta_{\rho}(y))}{|x-y|^{3+2s}}u_{n}(x)\, dxdy\right| =0,
\end{equation*}
which together with \eqref{vin1} implies that \eqref{I0} holds. 
Now, we note that 
\begin{align*}
II_{n, \rho}=\langle \mathcal{J}_{\e}'(u_{n}), u\eta_{\rho} \rangle - M(\|u_{n}\|_{\e}^{2}) \iint_{\R^{6}} \frac{(u_{n}(x)- u_{n}(y))(\eta_{\rho}(x)- \eta_{\rho}(y))}{|x-y|^{3+2s}}u(x)\, dxdy. 
\end{align*}
Similar calculations to the proof of \eqref{ter5} show that
\begin{equation*}
\lim_{\rho \rightarrow \infty} \limsup_{n\rightarrow \infty} \left|M(\|u_{n}\|_{\e}^{2}) \iint_{\R^{6}} \frac{(u_{n}(x)- u_{n}(y))(\eta_{\rho}(x)- \eta_{\rho}(y))}{|x-y|^{3+2s}}u(x)\, dxdy\right| =0,
\end{equation*} 
and by using $\langle \mathcal{J}_{\e}'(u_{n}), u\eta_{\rho} \rangle=o_{n}(1)$, we obtain
\begin{equation}\label{II0}
\lim_{\rho \rightarrow \infty} \limsup_{n\rightarrow \infty} |II_{n, \rho}|=0. 
\end{equation}
On the other hand, from the weak convergence, we have
\begin{equation}\label{III0}
\lim_{n\rightarrow \infty} |III_{n, \rho}|=0, 
\end{equation}
for any $\rho>R$.
By using Theorem \ref{Sembedding}, we know that $u_{n}\rightarrow u$ in $L^{p}_{loc}(\R^{3})$ for $2\leq p<\frac{6}{3-2s}$.
Hence, in view of $(g_{1})$ and $(g_{2})$, we deduce that for any $\rho>R$
\begin{equation}\label{IV0}
\lim_{n\rightarrow \infty} |IV_{n, \rho}|=0. 
\end{equation}
Putting together \eqref{Phi}, \eqref{I0}, \eqref{II0}, \eqref{III0} and \eqref{IV0}, and recalling that $\|u_{n}\|_{\e}\rightarrow t_{0}$ we get the thesis. 
\end{proof}

\noindent
Taking into account the previous lemmas, we can demonstrate the following result.
\begin{prop}\label{prop2.2}
The functional $\mathcal{J}_{\e}$ verifies the $(PS)_{d}$ condition in $\mathcal{H}_{\e}$.
\end{prop}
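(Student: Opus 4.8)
The plan is an argument by compactness. Let $\{u_{n}\}_{n\in\mathbb{N}}\subset\mathcal{H}_{\e}$ be a $(PS)_{d}$ sequence for $\mathcal{J}_{\e}$. First I would apply Lemma \ref{lem2.4} to deduce that $\{u_{n}\}_{n\in\mathbb{N}}$ is bounded in the Hilbert space $\mathcal{H}_{\e}$; hence, up to a subsequence, there exist $u\in\mathcal{H}_{\e}$ and $t_{0}\geq 0$ such that $u_{n}\rightharpoonup u$ in $\mathcal{H}_{\e}$ and $\|u_{n}\|_{\e}\to t_{0}$, with $\|u\|_{\e}\leq t_{0}$ by weak lower semicontinuity of the norm. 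Since in a Hilbert space $u_{n}\rightharpoonup u$ together with $\|u_{n}\|_{\e}\to\|u\|_{\e}$ forces $u_{n}\to u$ strongly (expand $\|u_{n}-u\|_{\e}^{2}=\|u_{n}\|_{\e}^{2}-2(u_{n},u)_{\e}+\|u\|_{\e}^{2}$ and use $(u_{n},u)_{\e}\to\|u\|_{\e}^{2}$), the whole matter reduces to proving that $t_{0}=\|u\|_{\e}$, i.e. that no mass leaks to infinity.

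To rule out the loss of mass I would split the norm into a ``core'' part on a large ball and a ``tail'' part. For $R>0$ write $\|v\|_{\e}^{2}=A_{R}(v)+B_{R}(v)$, where $A_{R}(v):=\int_{B_{R}}dx\int_{\R^{3}}\frac{|v(x)-v(y)|^{2}}{|x-y|^{3+2s}}\,dy+\int_{B_{R}}V(\e x)v^{2}\,dx$ is exactly the quantity handled in Lemma \ref{lem2.6}, and $B_{R}(v)$ is exactly the tail quantity appearing in Lemma \ref{lem2.5}; note that $A_{R}(v)\leq\|v\|_{\e}^{2}$ for every $v\in\mathcal{H}_{\e}$. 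Given $\zeta>0$, Lemma \ref{lem2.5} provides $R=R(\zeta)>0$ with $\limsup_{n\to\infty}B_{R}(u_{n})<\zeta$, while Lemma \ref{lem2.6}, applied with that fixed $R$, gives $A_{R}(u_{n})\to A_{R}(u)$ as $n\to\infty$. Combining these two facts,
\[
t_{0}^{2}=\lim_{n\to\infty}\|u_{n}\|_{\e}^{2}\leq\lim_{n\to\infty}A_{R}(u_{n})+\limsup_{n\to\infty}B_{R}(u_{n})\leq A_{R}(u)+\zeta\leq\|u\|_{\e}^{2}+\zeta,
\]
and letting $\zeta\to 0$ gives $t_{0}\leq\|u\|_{\e}$; together with $\|u\|_{\e}\leq t_{0}$ this yields $t_{0}=\|u\|_{\e}$, hence $u_{n}\to u$ strongly in $\mathcal{H}_{\e}$, which is the $(PS)_{d}$ condition.

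The substantive work is already contained in the preceding lemmas, so the residual difficulty here is mainly organisational. Lemma \ref{lem2.4} gives boundedness using the Ambrosetti--Rabinowitz type condition $(g_{3})$ and $(M_{1})$--$(M_{3})$; Lemma \ref{lem2.5} is the delicate nonlocal tail estimate (splitting $\R^{6}$ into $\mathbb{X}^{1}_{R},\mathbb{X}^{2}_{R},\mathbb{X}^{3}_{R}$ and carefully exploiting the decay of the kernel together with H\"older's inequality and the $L^{2^{*}_{s}}$ bound on $u$); and Lemma \ref{lem2.6} provides the local strong convergence of the energy densities, using Lemma \ref{pp} and the local compactness in Theorem \ref{Sembedding}. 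Granting all this, the only points to keep track of are that the splitting $\|v\|_{\e}^{2}=A_{R}(v)+B_{R}(v)$ matches the statements of Lemmas \ref{lem2.6} and \ref{lem2.5} exactly, and that the iterated limits ($n\to\infty$ first, then $\zeta\to0$) are taken in the right order; I would not need to identify $u$ as a critical point of $\mathcal{J}_{\e}$ to establish the Palais--Smale condition, although this follows a posteriori by passing to the limit in $\langle\mathcal{J}_{\e}'(u_{n}),\varphi\rangle$.
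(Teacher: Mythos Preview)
Your proposal is correct and follows essentially the same route as the paper: boundedness from Lemma~\ref{lem2.4}, the tail estimate from Lemma~\ref{lem2.5}, the local convergence from Lemma~\ref{lem2.6}, and then the standard Hilbert-space argument that weak convergence plus norm convergence implies strong convergence. The only cosmetic difference is that you bound $A_{R}(u)\leq\|u\|_{\e}^{2}$ directly, whereas the paper additionally arranges $R(\zeta)\to\infty$ as $\zeta\to0$ so that $A_{R}(u)\to\|u\|_{\e}^{2}$; your shortcut is slightly cleaner and avoids that extra step.
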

 
\begin{proof}
Let $\{u_{n}\}_{n\in\mathbb{N}}$ be a $(PS)$ sequence for $\mathcal{J}_{\e}$ at the level $d$. 
By Lemma \ref{lem2.4} we know that $\{u_{n}\}_{n\in\mathbb{N}}$ is bounded in $\mathcal{H}_{\e}$, thus, up to a subsequence, we deduce
\begin{equation}\label{ter6}
u_{n}\rightharpoonup u \mbox{ in } \mathcal{H}_{\e}.
\end{equation}
By using Lemma \ref{lem2.6} we know that
\begin{align}\begin{split}\label{ter7}
\lim_{n\rightarrow \infty} &\int_{B_{R}} dx \int_{\R^{3}} \frac{|u_{n}(x)-u_{n}(y)|^{2}}{|x-y|^{3+2s}} \, dy + \int_{B_{R}} V(\e x) u_{n}^{2} dx\\
&= \int_{B_{R}} dx \int_{\R^{3}} \frac{|u(x)-u(y)|^{2}}{|x-y|^{3+2s}} \, dy + \int_{B_{R}} V(\e x) u^{2} dx.  
\end{split}\end{align}
Moreover, by Lemma \ref{lem2.5} for each $\zeta>0$, there exists $R=R(\zeta)>\frac{C}{\zeta}$ such that 
\begin{equation}\label{ter8}
\limsup_{n\rightarrow \infty} \left[ \int_{\R^{3}\setminus B_{R}} \,dx \int_{\R^{3}} \frac{|u_{n}(x)- u_{n}(y)|^{2}}{|x-y|^{3+2s}}\,dy + \int_{\R^{3}\setminus B_{R}} V(\e x) u_{n}^{2} \,dx\right] <\zeta. 
\end{equation}
Putting together \eqref{ter6}, \eqref{ter7} and \eqref{ter8} we can infer
\begin{align*}
\|u\|_{\e}^{2} &\leq \liminf_{n\rightarrow \infty} \|u_{n}\|_{\e}^{2} \leq  \limsup_{n\rightarrow \infty} \|u_{n}\|_{\e}^{2}  \\
&= \limsup_{n\rightarrow \infty} \Bigl[ \, \int_{B_{R}} dx \int_{\R^{3}} \frac{|u_{n}(x)- u_{n}(y)|^{2}}{|x-y|^{3+2s}}\, dy + \int_{B_{R}} V(\e x) \,u_{n}^{2} dx \\
&\quad\quad+  \int_{\R^{3}\setminus B_{R}} dx \int_{\R^{3}} \frac{|u_{n}(x)- u_{n}(y)|^{2}}{|x-y|^{3+2s}}\, dy + \int_{\R^{3}\setminus B_{R}} V(\e x) \,u_{n}^{2} dx \, \Bigr]\\
&\leq \int_{B_{R}} dx \int_{\R^{3}} \frac{|u(x)-u(y)|^{2}}{|x-y|^{3+2s}} \, dy + \int_{B_{R}} V(\e x) u^{2} dx +\zeta.
\end{align*}
Taking the limit as $\zeta\rightarrow 0$, we have $R\rightarrow \infty$, therefore
\begin{equation*}
\|u\|_{\e}^{2}\leq \liminf_{n\rightarrow \infty} \|u_{n}\|_{\e}^{2} \leq \limsup_{n\rightarrow \infty} \|u_{n}\|_{\e}^{2}\leq \|u\|_{\e}^{2}, 
\end{equation*}
which implies $\|u_{n}\|_{\e}\rightarrow \|u\|_{\e}$. Since $\mathcal{H}_{\e}$ is a Hilbert space, we can deduce $u_{n}\rightarrow u$ in $\mathcal{H}_{\e}$. 
\end{proof}

\begin{cor}\label{cor2.1}
The functional $\psi_{\e}$ verifies the $(PS)_{d}$ condition on $\mathbb{S}_{\e}^{+}$. 
\end{cor}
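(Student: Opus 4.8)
The plan is to transfer the $(PS)_d$ condition from $\mathcal{J}_{\e}$ on $\N_{\e}$ (established in Proposition \ref{prop2.2}) to $\psi_{\e}$ on $\mathbb{S}_{\e}^{+}$ through the homeomorphism $m_{\e}$, using the correspondence between Palais--Smale sequences recorded in Proposition \ref{prop2.1}-$(c)$ and the boundary behaviour described in Lemma \ref{lem2.3}-$(iv)$. So I would start from a sequence $\{u_{n}\}_{n\in\mathbb{N}}\subset \mathbb{S}_{\e}^{+}$ with $\psi_{\e}(u_{n})\to d$ and $\psi_{\e}'(u_{n})\to 0$ in the (co)tangent space along $\{u_{n}\}$.

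The first, and crucial, step is to show that $\{u_{n}\}$ does not drift towards the boundary of the incomplete manifold $\mathbb{S}_{\e}^{+}$. Indeed, if along a subsequence ${\rm dist}(u_{n}, \partial \mathbb{S}_{\e}^{+})\to 0$, then Lemma \ref{lem2.3}-$(iv)$ forces $\mathcal{J}_{\e}(m_{\e}(u_{n}))\to \infty$; but $\psi_{\e}(u_{n})=\mathcal{J}_{\e}(m_{\e}(u_{n}))\to d<\infty$, a contradiction. Hence, up to a subsequence, there is $\delta_{0}>0$ with ${\rm dist}(u_{n}, \partial \mathbb{S}_{\e}^{+})\geq \delta_{0}$ for all $n$.

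Next I would set $v_{n}:= m_{\e}(u_{n})\in \N_{\e}$. By Proposition \ref{prop2.1}-$(c)$, $\{v_{n}\}_{n\in\mathbb{N}}$ is a $(PS)_{d}$ sequence for $\mathcal{J}_{\e}$, and being a Palais--Smale sequence it is bounded in $\mathcal{H}_{\e}$ by Lemma \ref{lem2.4}. Then Proposition \ref{prop2.2} yields, up to a subsequence, $v_{n}\to v$ strongly in $\mathcal{H}_{\e}$. Since $\mathcal{J}_{\e}'$ is continuous and $\N_{\e}$ is closed, $v$ is a nontrivial critical point of $\mathcal{J}_{\e}$ on $\N_{\e}$; in particular $v\neq 0$ and $v\in \mathcal{H}_{\e}^{+}$. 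Finally, because $m_{\e}^{-1}(w)=w/\|w\|_{\e}$ is continuous on $\N_{\e}$ (Lemma \ref{lem2.3}-$(iii)$) and $v\neq 0$, we conclude $u_{n}=m_{\e}^{-1}(v_{n})\to m_{\e}^{-1}(v)$ in $\mathbb{S}_{\e}^{+}$, which is exactly the $(PS)_{d}$ condition for $\psi_{\e}$.

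The hard part is really the first step: ruling out escape of the sequence towards $\partial \mathbb{S}_{\e}^{+}$ is precisely what makes the argument work on the incomplete manifold $\mathbb{S}_{\e}^{+}$, and it is handled entirely by Lemma \ref{lem2.3}-$(iv)$. Once this is in place, everything else is a routine propagation of compactness through the homeomorphism $m_{\e}$ and its inverse, together with the boundedness of Palais--Smale sequences from Lemma \ref{lem2.4} and the strong convergence from Proposition \ref{prop2.2}.
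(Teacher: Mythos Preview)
Your proposal is correct and follows essentially the same route as the paper: transfer the $(PS)_d$ sequence to $\mathcal{J}_{\e}$ via Proposition~\ref{prop2.1}-$(c)$, apply the compactness of Proposition~\ref{prop2.2}, and pull back through the continuous inverse $m_{\e}^{-1}$ of Lemma~\ref{lem2.3}-$(iii)$. The only difference is that you make explicit the step excluding escape toward $\partial\mathbb{S}_{\e}^{+}$ via Lemma~\ref{lem2.3}-$(iv)$, whereas the paper treats this as already absorbed into Proposition~\ref{prop2.1}-$(c)$; your version is slightly more self-contained, but the argument is the same.
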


\begin{proof}
Let $\{u_{n}\}_{n\in \mathbb{N}}$ be a $(PS)$ sequence for $\psi_{\e}$ at the level $d$. Then
\begin{equation*}
\psi_{\e}(u_{n})\rightarrow d \quad \mbox{ and } \quad \psi_{\e}'(u_{n})\rightarrow 0 \mbox{ in } (T_{u_{n}} \mathbb{S}_{\e}^{+})'.
\end{equation*}
From Proposition \ref{prop2.1}-$(c)$ follows that $\{m_{\e}(u_{n})\}_{n\in \mathbb{N}}$ is a $(PS)_{d}$ sequence for $\mathcal{J}_{\e}$ in $\mathcal{H}_{\e}$. Then, by using Proposition \ref{prop2.2} we see that $\mathcal{J}_{\e}$ verifies the $(PS)_{d}$ condition in $\mathcal{H}_{\e}$, so there exists $u\in \mathbb{S}_{\e}^{+}$ such that, up to a subsequence, 
\begin{equation*}
m_{\e}(u_{n})\rightarrow m_{\e}(u) \mbox{ in } \mathcal{H}_{\e}. 
\end{equation*}
By applying Lemma \ref{lem2.3}-$(iii)$ we can infer that $u_{n}\rightarrow u$ in $\mathbb{S}_{\e}^{+}$.
\end{proof}

\noindent
At this point, we are able to prove the main result of this Section. 
\begin{proof}[Proof of Theorem \ref{thm2.1}]
In view of Lemma \ref{lem2.2} and Proposition \ref{prop2.2} we can apply the Mountain Pass Theorem \cite{AR}, so we obtain the existence of a nontrivial critical point $u_{\e}$ of $\mathcal{J}_{\e}$. Now, we show that $u_{\e}\geq 0$ in $\R^{3}$.
Since $\langle \mathcal{J}_{\e}'(u_{\e}), u^{-}_{\e}\rangle =0$, we can see that
\begin{align*}
M(\|u_{\e}\|_{\e}^{2}) \left[\iint_{\R^{6}} \frac{(u_{\e}(x)-u_{\e}(y))(u^{-}_{\e}(x)-u^{-}_{\e}(y))}{|x-y|^{3-2s}} \, dx dy+\int_{\R^{3}} V(\e x) u_{\e} u_{\e}^{-}\, dx\right]=\int_{\R^{3}} g(\e x, u_{\e}) u_{\e}^{-} \, dx
\end{align*} 
Recalling that $(x-y)(x^{-}- y^{-})\leq - |x^{-}-y^{-}|^{2}$ and $g(x, t)=0$ for $t\leq 0$, we deduce that 
\begin{equation*}
0\leq M(\|u_{\e}\|_{\e}^{2}) \|u^{-}_{\e}\|_{\e}^{2} \leq 0.
\end{equation*}
By using $(M_1)$, we have $\|u^{-}_{\e}\|_{\e}^{2}=0$ that is $u_{\e}\geq 0$ in $\R^{3}$. 
\end{proof}

\section{The autonomous problem}
\noindent
In this section we deal with the limit problem associated to \eqref{Pe}.
More precisely, we consider the following problem 
\begin{equation}\label{P0}
M\left(\iint_{\R^{6}}\frac{|u(x)- u(y)|^{2}}{|x-y|^{3+2s}} dxdy + \int_{\R^{3}} V_{0}u^{2} dx\right)[(-\Delta)^{s}u+ V_{0}u]= f(u) \, \mbox{ in } \R^{3}. 
\end{equation}
The Euler-Lagrange functional associated to \eqref{P0} is 
\begin{equation*}
\mathcal{J}_{0}(u)= \frac{1}{2}\widehat{M}\left(\iint_{\R^{6}}\frac{|u(x)- u(y)|^{2}}{|x-y|^{3+2s}} dxdy + \int_{\R^{3}} V_{0}u^{2} dx\right)- \int_{\R^{3}} F(u)\, dx
\end{equation*}
which is well defined on the Hilbert space $\mathcal{H}_{0}:=H^{s}(\R^{3})$ endowed with the inner product
\begin{equation*}
(u, \varphi)_{0} = \iint_{\R^{6}} \frac{(u(x)- u(y))(\varphi(x) - \varphi(y))}{|x-y|^{3+2s}} dxdy + \int_{\R^{3}} V_{0} u(x)\, \varphi(x) dx. 
\end{equation*}
The norm induced by the inner product is 
\begin{equation*}
\|u\|_{0}^{2} = \iint_{\R^{6}}\frac{|u(x)- u(y)|^{2}}{|x-y|^{3+2s}} dxdy + \int_{\R^{3}} V_{0} u^{2} dx. 
\end{equation*}
The Nehari manifold associated to $\mathcal{J}_{0}$ is given by 
\begin{equation*}
\N_{0}= \{u\in \mathcal{H}_{0}\setminus \{0\} : \langle \mathcal{J}_{0}'(u), u \rangle=0\}. 
\end{equation*}
We denote by $\mathcal{H}_{0}^{+}$ the open subset of $\mathcal{H}_{0}$ defined as
\begin{equation*}
\mathcal{H}_{0}^{+}=\{u\in \mathcal{H}_{0}: |\supp(u^{+})|>0\},
\end{equation*}
and $\mathbb{S}_{0}^{+}= \mathbb{S}_{0}\cap \mathcal{H}_{0}^{+}$, where $\mathbb{S}_{0}$ is the unit sphere of $\mathcal{H}_{0}$. We note that $\mathbb{S}_{0}^{+}$ is a incomplete $C^{1,1}$-manifold of codimension $1$ modeled on $\mathcal{H}_{0}$ and contained in $\mathcal{H}_{0}^{+}$. Thus $\mathcal{H}_{0}= T_{u}\mathbb{S}_{0}^{+}\oplus \R u$ for each $u\in \mathbb{S}_{0}^{+}$, where $T_{u}\mathbb{S}_{0}^{+}=\{u\in \mathcal{H}_{0} : (u, v)_{0}=0\}$. 

\noindent
As in Section $3$, we can see that the following results hold.
\begin{lem}\label{lem3.1}
Assume that $(M_1)-(M_3)$ and $(f_1)-(f_4)$ hold true. Then, 
\begin{compactenum}[$(i)$]
\item For each $u\in \mathcal{H}_{0}^{+}$, let $h:\R^{+}\rightarrow \R$ be defined by $h_{u}(t)= \mathcal{J}_{0}(tu)$. Then, there is a unique $t_{u}>0$ such that 
\begin{align*}
&h'_{u}(t)>0 \mbox{ in } (0, t_{u})\\
&h'_{u}(t)<0 \mbox{ in } (t_{u}, \infty); 
\end{align*}
\item there exists $\tau>0$ independent of $u$ such that $t_{u}\geq \tau$ for any $u\in \mathbb{S}_{0}^{+}$. Moreover, for each compact set $\mathbb{K}\subset \mathbb{S}_{0}^{+}$ there is a positive constant $C_{\mathbb{K}}$ such that $t_{u}\leq C_{\mathbb{K}}$ for any $u\in \mathbb{K}$; 
\item The map $\hat{m}_{0}: \mathcal{H}_{0}^{+}\rightarrow \N_{0}$ given by $\hat{m}_{0}(u)= t_{u}u$ is continuous and $m_{0}:= \hat{m}_{0}|_{\mathbb{S}_{0}^{+}}$ is a homeomorphism between $\mathbb{S}_{0}^{+}$ and $\N_{0}$. Moreover $m_{0}^{-1}(u)=\frac{u}{\|u\|_{0}}$; 
\item If there is a sequence $\{u_{n}\}_{n\in \mathbb{N}}\subset \mathbb{S}_{0}^{+}$ such that ${\rm dist}(u_{n}, \partial \mathbb{S}_{0}^{+})\rightarrow 0$ then $\|m_{0}(u_{n})\|_{0}\rightarrow \infty$ and $\mathcal{J}_{0}(m_{0}(u_{n}))\rightarrow \infty$.
\end{compactenum}

\end{lem}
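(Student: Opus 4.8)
The plan is to repeat, almost verbatim, the arguments of Lemma \ref{lem2.3}, with the only modification that the potential $V(\e x)$ is replaced by the constant $V_{0}$ and the nonlinearity $g(\e x, \cdot)$ is replaced by $f$. The reason this goes through is that the three key structural ingredients used in Lemma \ref{lem2.3} are still available here: the monotonicity condition $(M_3)$ on the Kirchhoff term, the conditions $(f_{1})$--$(f_{4})$ on $f$ (which are stronger than the conditions $(g_{1})$--$(g_{4})$ satisfied by $g$, since $f$ satisfies the genuine Ambrosetti--Rabinowitz inequality $(f_3)$ everywhere, not just on $\Omega$), and the geometry of $\mathbb{S}_{0}^{+}$ as an incomplete $C^{1,1}$-manifold. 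In particular, since $f$ itself satisfies $(f_4)$ (the map $t\mapsto f(t)/t^{3}$ is non-decreasing on all of $(0,\infty)$), the proof of the uniqueness of $t_u$ in part $(i)$ is actually \emph{simpler} than the corresponding argument in Lemma \ref{lem2.3}-$(i)$: there is no need to split the integral into the regions $\Omega_{\e}$ and $\R^{3}\setminus\Omega_{\e}$, nor to estimate the terms $I$, $II$, $III$ separately. One only uses $(M_3)$ together with $(f_4)$ to conclude that if $t_{1}>t_{2}>0$ both solve $h_u'(t)=0$, then subtracting the two Nehari identities (divided by $t_i^{3}\|u\|_{0}^{4}$) yields
\begin{equation*}
\frac{m_{0}}{\|u\|_{0}^{2}}\left(\frac{1}{t_{1}^{2}}-\frac{1}{t_{2}^{2}}\right)\geq \frac{1}{\|u\|_{0}^{4}}\int_{\R^{3}}\left[\frac{f(t_{1}u)}{(t_{1}u)^{3}}-\frac{f(t_{2}u)}{(t_{2}u)^{3}}\right]u^{4}\,dx\geq 0,
\end{equation*}
which is a contradiction since the left-hand side is strictly negative.

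For part $(i)$, existence of $t_u$ follows from the mountain-pass geometry of $\mathcal{J}_{0}$: one first checks, exactly as in Lemma \ref{lem2.2} (using $(M_1)$, $(f_1)$, $(f_2)$ and Theorem \ref{Sembedding}), that $h_u(0)=0$, $h_u(t)>0$ for small $t>0$, and (using $(f_3)$ and the bound $M(t)\le\gamma(1+t)$ coming from $(M_3)$, as in \eqref{cc}) that $h_u(t)\to-\infty$ as $t\to+\infty$; hence $h_u$ attains a positive global maximum at some $t_u>0$ with $h_u'(t_u)=0$, i.e.\ $t_u u\in\N_{0}$. Uniqueness is as above, and the sign information on $h_u'$ on $(0,t_u)$ and $(t_u,\infty)$ is an immediate consequence of uniqueness together with the known sign of $h_u$ near $0$ and near $+\infty$.

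For parts $(ii)$, $(iii)$ and $(iv)$, one copies the corresponding arguments from Lemma \ref{lem2.3}. In $(ii)$, the lower bound $t_u\ge\tau$ follows from the Nehari identity $t_u M(t_u^{2})=\int_{\R^{3}}f(t_u u)u\,dx$ together with the growth estimate $|F(t)|\le\xi t^{4}+C_{\xi}|t|^{q}$ (from $(f_1)$--$(f_2)$), $(M_1)$, and the Sobolev embedding; the uniform-on-compacts upper bound follows by contradiction, using that $\mathcal{J}_{0}(t_n u_n)\to-\infty$ along a hypothetical divergent sequence $t_n\to\infty$, which contradicts the lower bound $\mathcal{J}_{0}(v)\ge\frac{\vartheta-4}{4\vartheta}M(\|v\|_{0}^{2})\|v\|_{0}^{2}+\frac14 m_0\|v\|_{0}^{2}\ge c\|v\|_{0}^{2}>0$ on $\N_{0}$ obtained from $(M_3)$ via \eqref{ccc} and $(f_3)$ (here the argument is even cleaner than in Lemma \ref{lem2.3}-$(ii)$ because there is no penalized region, so the term $\left(\frac{\vartheta-2}{2\vartheta}\right)\frac1K\|v\|_0^2$ is simply absent). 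Part $(iii)$ is the abstract machinery of Szulkin--Weth: $\hat m_0$ and $m_0$ are well defined by $(i)$, $m_0^{-1}(u)=u/\|u\|_0$ is continuous, the identity $m_0^{-1}\circ m_0=\mathrm{id}$ gives bijectivity, and continuity of $\hat m_0$ follows by passing to the limit in the Nehari identity along $u_n\to u$ (using $(ii)$ to get $t_{u_n}\to t_0$ and then $(i)$ to identify $t_0=t_u$). Part $(iv)$ is proved as in Lemma \ref{lem2.3}-$(iv)$: since $\partial\mathbb{S}_{0}^{+}$ consists of functions whose positive part has null support, for $v\in\partial\mathbb{S}_0^+$ and $u_n\in\mathbb{S}_0^+$ one has $\|u_n^+\|_{L^p}\le C_p\,\mathrm{dist}(u_n,\partial\mathbb{S}_0^+)$, whence $\int_{\R^3}F(tu_n)\,dx\to 0$ for each fixed $t>0$, so $\liminf_n\mathcal{J}_0(m_0(u_n))\ge\frac12\widehat M(t^2)\ge\frac12 m_0 t^2$ for every $t>0$, forcing $\mathcal{J}_0(m_0(u_n))\to\infty$; the norm divergence then follows from $\mathcal{J}_0(m_0(u_n))\le\frac12\widehat M(t_{u_n}^2)\le C(t_{u_n}+t_{u_n}^2)$ via \eqref{cc}. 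I do not anticipate a genuine obstacle: the only point requiring a little care is making sure that, in the autonomous (non-penalized) setting, the constant $K$ plays no role, so all the estimates that in Lemma \ref{lem2.3} relied on $K>2/m_0$ are here replaced by the simpler fact that $f$ satisfies $(f_3)$ and $(f_4)$ \emph{globally}.
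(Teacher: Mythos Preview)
Your proposal is correct and matches the paper's approach: the paper itself does not give a separate proof of Lemma \ref{lem3.1} but simply states ``As in Section 3, we can see that the following results hold,'' meaning exactly the adaptation of Lemma \ref{lem2.3} that you outline. Your observation that the autonomous setting actually simplifies several steps (no splitting into $\Omega_{\e}$ and its complement in $(i)$, no $1/K$ correction term in $(ii)$, no penalized part in $(iv)$) is accurate and well put.
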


\noindent
Let us define the maps 
\begin{equation*}
\hat{\psi}_{0}: \mathcal{H}_{0}^{+} \rightarrow \R \quad \mbox{ and } \quad \psi_{0}: \mathbb{S}_{0}^{+}\rightarrow \R, 
\end{equation*}
by $\hat{\psi}_{0}(u):= \mathcal{J}_{0}(\hat{m}_{0}(u))$ and $\psi_{0}:=\hat{\psi}_{0}|_{\mathbb{S}_{0}^{+}}$. 
\begin{prop}\label{prop3.1}
Assume that assumptions $(M_{1})$-$(M_{3})$ and $(f_{1})$-$(f_{4})$ hold true. Then, 
\begin{compactenum}[$(a)$]
\item $\hat{\psi}_{0} \in C^{1}(\mathcal{H}_{0}^{+}, \R)$ and 
\begin{equation*}
\langle \hat{\psi}_{0}'(u), v\rangle = \frac{\|\hat{m}_{0}(u)\|_{0}}{\|u\|_{0}} \langle \mathcal{J}_{0}'(\hat{m}_{0}(u)), v\rangle 
\end{equation*}
for every $u\in \mathcal{H}_{0}^{+}$ and $v\in \mathcal{H}_{0}$; 
\item $\psi_{0} \in C^{1}(\mathbb{S}_{0}^{+}, \R)$ and 
\begin{equation*}
\langle \psi_{0}'(u), v \rangle = \|m_{0}(u)\|_{0} \langle \mathcal{J}_{0}'(m_{0}(u)), v\rangle, 
\end{equation*}
for every 
\begin{equation*}
v\in T_{u}\mathbb{S}_{0}^{+}:= \left \{ v\in \mathcal{H}_{0} : (u, v)_{0}=0 \right\}; 
\end{equation*}
\item If $\{u_{n}\}_{n\in \mathbb{N}}$ is a $(PS)_{d}$ sequence for $\psi_{0}$, then $\{m_{0}(u_{n})\}_{n\in \mathbb{N}}$ is a $(PS)_{d}$ sequence for $\mathcal{J}_{0}$. If $\{u_{n}\}_{n\in \mathbb{N}}\subset \N_{0}$ is a bounded $(PS)_{d}$ sequence for $\mathcal{J}_{0}$, then $\{m_{0}^{-1}(u_{n})\}_{n\in \mathbb{N}}$ is a $(PS)_{d}$ sequence for the functional $\psi_{0}$; 
\item $u$ is a critical point of $\psi_{0}$ if, and only if, $m_{0}(u)$ is a nontrivial critical point for $\mathcal{J}_{0}$. Moreover, the corresponding critical values coincide and 
\begin{equation*}
\inf_{u\in \mathbb{S}_{0}^{+}} \psi_{0}(u)= \inf_{u\in \N_{0}} \mathcal{J}_{0}(u).  
\end{equation*}
\end{compactenum}
\end{prop}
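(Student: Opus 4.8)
The strategy is to observe that $\mathcal{J}_{0}$ has exactly the same structure as $\mathcal{J}_{\e}$: it is the Kirchhoff energy built from $\widehat{M}$, the Hilbert norm $\|\cdot\|_{0}$ and the nonlinearity $f$, which here plays the role that $g(\e x,\cdot)$ played in Section $3$ (formally, $\mathcal{J}_{0}$ is $\mathcal{J}_{\e}$ in the limiting case $V\equiv V_{0}$, $\Omega=\R^{3}$, where $(g_{1})$--$(g_{4})$ reduce to $(f_{1})$--$(f_{4})$). As in Section $3$, from $(M_{1})$--$(M_{3})$, $(f_{1})$--$(f_{2})$ and Theorem \ref{Sembedding} one checks that $\mathcal{J}_{0}\in C^{1}(\mathcal{H}_{0},\R)$ with $\langle \mathcal{J}_{0}'(u),v\rangle = M(\|u\|_{0}^{2})\,(u,v)_{0} - \int_{\R^{3}} f(u)\,v\,dx$, so that $u\in\N_{0}$ if and only if $M(\|u\|_{0}^{2})\|u\|_{0}^{2}=\int_{\R^{3}}f(u)u\,dx$. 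Lemma \ref{lem3.1} then supplies all the structural hypotheses needed to run the abstract scheme of Szulkin \& Weth \cite{SW}: the unique fibering map $t\mapsto\mathcal{J}_{0}(tu)$, the two-sided bounds on $t_{u}$, the homeomorphism $m_{0}\colon\mathbb{S}_{0}^{+}\to\N_{0}$ with $m_{0}^{-1}(u)=u/\|u\|_{0}$, and the blow-up of $m_{0}$ near $\partial\mathbb{S}_{0}^{+}$. Granting these, the four items follow exactly as in Proposition \ref{prop2.1}; below I only indicate the key computations.

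For $(a)$ and $(b)$: for $u\in\mathcal{H}_{0}^{+}$ write $\hat{m}_{0}(u)=t_{u}u$, with $t_{u}>0$ the unique maximum point of $t\mapsto\mathcal{J}_{0}(tu)$ from Lemma \ref{lem3.1}$(i)$. Since $u\mapsto t_{u}$ is continuous (Lemma \ref{lem3.1}$(iii)$) and $\mathcal{J}_{0}\in C^{1}$, the composition $\hat{\psi}_{0}=\mathcal{J}_{0}\circ\hat{m}_{0}$ belongs to $C^{1}(\mathcal{H}_{0}^{+},\R)$; differentiating $\hat{\psi}_{0}(u)=\mathcal{J}_{0}(t_{u}u)$ and using $\langle\mathcal{J}_{0}'(t_{u}u),t_{u}u\rangle=0$ to cancel the term coming from the differential of $u\mapsto t_{u}$, one obtains
$$
\langle\hat{\psi}_{0}'(u),v\rangle = t_{u}\,\langle\mathcal{J}_{0}'(t_{u}u),v\rangle = \frac{\|\hat{m}_{0}(u)\|_{0}}{\|u\|_{0}}\,\langle\mathcal{J}_{0}'(\hat{m}_{0}(u)),v\rangle ,
$$
since $\|\hat{m}_{0}(u)\|_{0}=t_{u}\|u\|_{0}$. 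Restricting to $u\in\mathbb{S}_{0}^{+}$, where $\|u\|_{0}=1$, and to $v\in T_{u}\mathbb{S}_{0}^{+}$ gives $(b)$; that $\mathbb{S}_{0}^{+}$ is a $C^{1,1}$-manifold of codimension $1$ guarantees that $\psi_{0}'(u)$ is a well-defined element of $(T_{u}\mathbb{S}_{0}^{+})'$.

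For $(c)$ and $(d)$ I would use the orthogonal splitting $\mathcal{H}_{0}=T_{u}\mathbb{S}_{0}^{+}\oplus\R u$ together with the identity $\langle\mathcal{J}_{0}'(m_{0}(u)),m_{0}(u)\rangle=0$ valid on $\N_{0}$: the latter says that $\mathcal{J}_{0}'(m_{0}(u))$ vanishes along the $\R u$ direction, so its dual norm on all of $\mathcal{H}_{0}$ coincides with its norm on $T_{u}\mathbb{S}_{0}^{+}$, which by $(b)$ equals $\|\psi_{0}'(u)\|/\|m_{0}(u)\|_{0}$. Combining this with the lower bound $t_{u_{n}}\ge\tau$ and the compact-set upper bound of Lemma \ref{lem3.1}$(ii)$, and with Lemma \ref{lem3.1}$(iv)$ (which prevents a $(PS)$ sequence from approaching $\partial\mathbb{S}_{0}^{+}$), one gets the stated equivalence between $(PS)_{d}$ sequences of $\psi_{0}$ and of $\mathcal{J}_{0}$, and between their critical points, with coinciding critical values since $\psi_{0}(u)=\mathcal{J}_{0}(m_{0}(u))$; the identity $\inf_{\mathbb{S}_{0}^{+}}\psi_{0}=\inf_{\N_{0}}\mathcal{J}_{0}$ then follows from the bijectivity of $m_{0}$.

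I do not expect any genuine obstacle here: the argument is pure bookkeeping resting on Lemma \ref{lem3.1} and the Szulkin--Weth abstract results, and it is structurally identical to the proof of Proposition \ref{prop2.1}. The only point deserving care, as already in Section $3$, is that $\mathbb{S}_{0}^{+}$ is merely an \emph{incomplete} $C^{1,1}$-manifold; this is precisely why the classical Nehari-manifold differentiability arguments have to be replaced by the Szulkin--Weth machinery and why Lemma \ref{lem3.1}$(iv)$ is needed to keep Palais--Smale sequences away from the boundary.
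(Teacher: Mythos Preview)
Your proposal is correct and follows exactly the paper's approach: the paper does not give an explicit proof of Proposition~\ref{prop3.1} (nor of its $\e$-counterpart Proposition~\ref{prop2.1}), simply stating that the result is a consequence of Lemma~\ref{lem3.1} together with the abstract Szulkin--Weth scheme, precisely as you outline. Your sketch is in fact more detailed than what the paper provides.
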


\begin{remark}\label{rem4}
As in Section $3$, we have the following equalities
\begin{align*}
c_{0}&:=\inf_{u\in \N_{0}} \mathcal{J}_{0}(u)=\inf_{u\in \mathcal{H}_{0}^{+}} \max_{t>0} \mathcal{J}_{0}(tu)=\inf_{u\in \mathbb{S}_{0}^{+}} \max_{t>0} \mathcal{J}_{0}(tu).
\end{align*}
\end{remark}

\noindent
The following Lemma is very important because permits to deduce that the weak limit of a $(PS)_{d}$ sequence is nontrivial. 
\begin{lem}\label{lem3.2}
Let $\{u_{n}\}_{n\in \mathbb{N}}\subset \mathcal{H}_{0}$ be a $(PS)_{d}$ sequence for $\mathcal{J}_{0}$ with $u_{n}\rightharpoonup 0$. Then, only one of the alternative below holds:
\begin{compactenum}[$(a)$]
\item $u_{n}\rightarrow 0$ in $\mathcal{H}_{0}$; 
\item there are a sequence $\{y_{n}\}_{n\in \mathbb{N}}\subset \R^{3}$ and constants $R, \beta>0$ such that
\begin{equation*}
\liminf_{n\rightarrow \infty} \int_{B_{R}(y_{n})} u_{n}^{2} dx \geq \beta >0. 
\end{equation*}
\end{compactenum}
\end{lem}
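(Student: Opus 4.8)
The plan is to prove the dichotomy by assuming that alternative $(b)$ fails and deducing $(a)$. First I would record that the sequence $\{u_{n}\}_{n\in\mathbb{N}}$ is bounded in $\mathcal{H}_{0}=H^{s}(\R^{3})$: this follows exactly as in Lemma \ref{lem2.4} (in fact more directly, using the Ambrosetti--Rabinowitz type condition $(f_{3})$ in place of $(g_{3})$), by combining
\begin{equation*}
\mathcal{J}_{0}(u_{n})-\frac{1}{\vartheta}\langle \mathcal{J}_{0}'(u_{n}), u_{n}\rangle = d + o_{n}(1)+o_{n}(1)\|u_{n}\|_{0}
\end{equation*}
with the inequality $\widehat{M}(t)\geq \frac{M(t)+m_{0}}{2}t$ coming from $(M_{3})$, together with $\vartheta>4$ and $(M_{1})$; this yields $C+\|u_{n}\|_{0}\geq c\,\|u_{n}\|_{0}^{2}$ for suitable constants, hence boundedness.

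Suppose now that $(b)$ does not hold. Then for every $R>0$ one has $\lim_{n\to\infty}\sup_{y\in\R^{3}}\int_{B_{R}(y)}u_{n}^{2}\,dx=0$, so Lemma \ref{Lions} applies and gives $u_{n}\to 0$ in $L^{r}(\R^{3})$ for every $r\in(2,2^{*}_{s})$. Since $s\in(\tfrac34,1)$ we have $2^{*}_{s}=\tfrac{6}{3-2s}>4$, so both $4$ and the exponent $q$ from $(f_{2})$ lie strictly inside $(2,2^{*}_{s})$; consequently $u_{n}\to 0$ in $L^{4}(\R^{3})$ and in $L^{q}(\R^{3})$. (This is precisely the place where the restriction $s>\tfrac34$ is used.)

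Next I would invoke $(f_{1})$--$(f_{2})$: for every $\xi>0$ there is $C_{\xi}>0$ with $|f(t)t|\leq \xi|t|^{4}+C_{\xi}|t|^{q}$ and $|F(t)|\leq \xi|t|^{4}+C_{\xi}|t|^{q}$ for all $t\in\R$. Combining these with the convergences just obtained gives $\int_{\R^{3}}f(u_{n})u_{n}\,dx\to 0$ and $\int_{\R^{3}}F(u_{n})\,dx\to 0$. Using $\langle \mathcal{J}_{0}'(u_{n}), u_{n}\rangle = o_{n}(1)$ we then get
\begin{equation*}
M(\|u_{n}\|_{0}^{2})\,\|u_{n}\|_{0}^{2}=\int_{\R^{3}}f(u_{n})u_{n}\,dx+o_{n}(1)\longrightarrow 0,
\end{equation*}
and since $M(\|u_{n}\|_{0}^{2})\geq m_{0}>0$ by $(M_{1})$, this forces $\|u_{n}\|_{0}\to 0$, i.e.\ $u_{n}\to 0$ in $\mathcal{H}_{0}$, which is alternative $(a)$ (and in particular $d=0$ in that case).

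The argument is essentially routine; there is no serious obstacle. The only point deserving a little care is the boundedness of $\{u_{n}\}_{n\in\mathbb{N}}$ in $\mathcal{H}_{0}$, which however is handled verbatim as in Lemma \ref{lem2.4}, and the bookkeeping ensuring that the exponents $4$ and $q$ fall strictly inside the interval $(2,2^{*}_{s})$ on which Lemma \ref{Lions} is stated — guaranteed by $s\in(\tfrac34,1)$ and $(f_{2})$.
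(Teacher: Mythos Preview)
Your proof is correct and follows essentially the same approach as the paper: assume $(b)$ fails, apply Lemma \ref{Lions} to get $u_{n}\to 0$ in $L^{r}(\R^{3})$ for $r\in(2,2^{*}_{s})$, then use $(f_{1})$--$(f_{2})$ to conclude $\int_{\R^{3}} f(u_{n})u_{n}\,dx\to 0$, and finally combine $\langle \mathcal{J}_{0}'(u_{n}),u_{n}\rangle=o_{n}(1)$ with $(M_{1})$ to force $\|u_{n}\|_{0}\to 0$. You include a bit more detail than the paper (explicitly verifying boundedness of the $(PS)_{d}$ sequence and the exponent bookkeeping for $4$ and $q$), but the argument is the same.
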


\begin{proof}
Assume that $(b)$ does not hold. Then, for  all $R>0$ we have
\begin{equation*}
\lim_{n\rightarrow \infty} \sup_{y\in \R^{3}} \int_{B_{R}(y)} u_{n}^{2} dx=0. 
\end{equation*}
Since $\{u_{n}\}_{n\in \mathbb{N}}$ is bounded in $\mathcal{H}_{0}$, from Lemma \ref{Lions} follows that
\begin{equation*}
u_{n}\rightarrow 0 \, \mbox{ in } L^{p}(\R^{3}) \, \mbox{ for any } 2<p<2^{*}_{s}. 
\end{equation*}
Moreover, by $(f_{1})$ and $(f_{2})$, we can see that 
$$
\lim_{n\rightarrow \infty} \int_{\R^{3}} f(u_{n}) u_{n} \, dx=0.
$$
Then, by using $\langle \mathcal{J}'_{0}(u_{n}), u_{n}\rangle=o_{n}(1)$ and $(M_1)$, we can see that
\begin{equation*}
0\leq m_{0}\|u_{n}\|^{2}_{0}\leq M(\|u_{n}\|^{2}_{0})\|u_{n}\|^{2}_{0}= \int_{\R^{3}} f(u_{n})u_{n} \, dx +o_{n}(1)= o_{n}(1). 
\end{equation*}
Therefore it holds $(a)$. 
\end{proof}

\begin{remark}\label{rem5}
Let us observe that, if $\{u_{n}\}_{n\in \mathbb{N}}$ is a $(PS)$ sequence at the level $c_{0}$ for the functional $\mathcal{J}_{0}$ such that $u_{n}\rightharpoonup u$, then $u\neq 0$. Otherwise, if $u_{n}\rightharpoonup 0$ and, once it does not occur $u_{n}\rightarrow 0$, in view of Lemma \ref{lem3.2} we can find $\{y_{n}\}_{n\in \mathbb{N}}\subset \R^{3}$ and $R, \beta>0$ such that
\begin{equation*}
\liminf_{n\rightarrow \infty} \int_{B_{R}(y_{n})} u_{n}^{2} dx \geq \beta >0. 
\end{equation*}
Set $v_{n}(x)=u_{n}(x+y_{n})$, and making a change of variable, we can see that $\{v_{n}\}_{n\in \mathbb{N}}$ is a $(PS)$ sequence for $\mathcal{J}_{0}$ at the level $c_{0}$, $\{v_{n}\}_{n\in \mathbb{N}}$ is bounded in $\mathcal{H}_{0}$ and there exists $v\in \mathcal{H}_{0}$ such that $v_{n}\rightharpoonup v$ and $v\neq 0$. 
\end{remark}

\begin{thm}\label{thm3.1}
The problem \eqref{P0} admits a positive solution. 
\end{thm}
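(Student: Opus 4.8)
The plan is to realize a positive solution of \eqref{P0} as a ground state of $\mathcal{J}_{0}$ (a minimizer on $\N_{0}$); the substance of the argument is to recover compactness, since $\mathcal{J}_{0}$ is invariant under translations and, unlike the penalized functional of Section~3, carries no term controlling the mass at infinity. First I would produce a $(PS)_{c_{0}}$ sequence for $\mathcal{J}_{0}$: by Proposition~\ref{prop3.1} and Remark~\ref{rem4}, $\psi_{0}\in C^{1}(\mathbb{S}_{0}^{+},\R)$ is bounded below by $c_{0}=\inf_{\mathbb{S}_{0}^{+}}\psi_{0}=\inf_{\N_{0}}\mathcal{J}_{0}>0$ and, by Lemma~\ref{lem3.1}-$(iv)$, blows up near $\partial\mathbb{S}_{0}^{+}$; hence Ekeland's variational principle produces $\{v_{n}\}\subset\mathbb{S}_{0}^{+}$ with $\psi_{0}(v_{n})\to c_{0}$ and $\psi_{0}'(v_{n})\to0$, and then $w_{n}:=m_{0}(v_{n})\in\N_{0}$ is a $(PS)_{c_{0}}$ sequence for $\mathcal{J}_{0}$ by Proposition~\ref{prop3.1}-$(c)$ (equivalently one may invoke the Mountain Pass Theorem, $\mathcal{J}_{0}$ having the mountain pass geometry). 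By the computation already performed in Section~3, which only uses $(M_{1})$, $(M_{3})$, \eqref{ccc} and $(f_{3})$ with $\vartheta>4$, the sequence $\{w_{n}\}$ is bounded in $\mathcal{H}_{0}$, so up to a subsequence $w_{n}\rightharpoonup w$ in $\mathcal{H}_{0}$ and $\|w_{n}\|_{0}^{2}\to\ell$ for some $\ell\geq\|w\|_{0}^{2}$.

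Next I would make sure the weak limit is nontrivial. If $w=0$, then $w_{n}\to0$ strongly is impossible (otherwise $\mathcal{J}_{0}(w_{n})\to\mathcal{J}_{0}(0)=0\neq c_{0}$), so by Lemma~\ref{lem3.2} there are $\{y_{n}\}\subset\R^{3}$ and $R,\beta>0$ with $\liminf_{n}\int_{B_{R}(y_{n})}w_{n}^{2}\,dx\geq\beta$; replacing $w_{n}$ by $w_{n}(\cdot+y_{n})$, which is again a bounded $(PS)_{c_{0}}$ sequence for $\mathcal{J}_{0}$ by translation invariance (Remark~\ref{rem5}), I may assume $w\neq0$. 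Testing $\mathcal{J}_{0}'(w_{n})\to0$ against a fixed $\varphi\in C^{\infty}_{c}(\R^{3})$ and against $w$, and passing to the limit via $w_{n}\to w$ in $L^{p}_{loc}$ $(2\le p<2^{*}_{s})$, the subcritical growth $(f_{1})$--$(f_{2})$, and $M(\|w_{n}\|_{0}^{2})\to M(\ell)$ (continuity of $M$), I obtain $M(\ell)(w,\varphi)_{0}=\int_{\R^{3}}f(w)\varphi\,dx$ for every $\varphi\in\mathcal{H}_{0}$; thus $w$ is a nontrivial solution of the \emph{frozen} equation $M(\ell)[(-\Delta)^{s}w+V_{0}w]=f(w)$. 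Testing this with $w^{-}$ and using $(x-y)(x^{-}-y^{-})\leq-|x^{-}-y^{-}|^{2}$ gives $w\geq0$, while $\int_{\R^{3}}f(w)w\,dx=M(\ell)\|w\|_{0}^{2}>0$ shows $w\in\mathcal{H}_{0}^{+}$ and, since $M$ is non-decreasing and $\ell\geq\|w\|_{0}^{2}$,
\begin{equation*}
\langle\mathcal{J}_{0}'(w),w\rangle=\bigl(M(\|w\|_{0}^{2})-M(\ell)\bigr)\|w\|_{0}^{2}\leq0 .
\end{equation*}

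The core step is to show that $c_{0}$ is attained. Let $t_{w}\in(0,\infty)$ be the unique number with $t_{w}w\in\N_{0}$ provided by Lemma~\ref{lem3.1}-$(i)$; since the fibering map $h_{w}(t)=\mathcal{J}_{0}(tw)$ is increasing on $(0,t_{w})$ and $h_{w}'(1)=\langle\mathcal{J}_{0}'(w),w\rangle\leq0$, necessarily $t_{w}\leq1$. I would then use that $t\mapsto\frac14 f(t)t-F(t)$ is non-decreasing by \eqref{rem1}, and that $\tau\mapsto\frac12\widehat{M}(\tau)-\frac14 M(\tau)\tau$ is non-decreasing as a consequence of $(M_{3})$ (and \eqref{ccc}), together with Fatou's lemma applied to the non-negative integrand $\frac14 f(w_{n})w_{n}-F(w_{n})$ (non-negative because $\vartheta>4$), the continuity of $M$ and $\widehat{M}$, and $\langle\mathcal{J}_{0}'(w_{n}),w_{n}\rangle=0$, to estimate
\begin{align*}
c_{0}&=\lim_{n\to\infty}\Bigl[\mathcal{J}_{0}(w_{n})-\tfrac14\langle\mathcal{J}_{0}'(w_{n}),w_{n}\rangle\Bigr]\\
&\geq\Bigl[\tfrac12\widehat{M}(\|w\|_{0}^{2})-\tfrac14 M(\|w\|_{0}^{2})\|w\|_{0}^{2}\Bigr]+\int_{\R^{3}}\Bigl(\tfrac14 f(w)w-F(w)\Bigr)\,dx\\
&=\mathcal{J}_{0}(w)-\tfrac14\langle\mathcal{J}_{0}'(w),w\rangle\\
&\geq\mathcal{J}_{0}(t_{w}w)-\tfrac14\langle\mathcal{J}_{0}'(t_{w}w),t_{w}w\rangle=\mathcal{J}_{0}(t_{w}w)\geq c_{0},
\end{align*}
where the first inequality uses $\ell\geq\|w\|_{0}^{2}$ and the monotonicity of $\tau\mapsto\frac12\widehat{M}(\tau)-\frac14 M(\tau)\tau$; the two equalities come from the identity $\mathcal{J}_{0}(u)-\frac14\langle\mathcal{J}_{0}'(u),u\rangle=\frac12\widehat{M}(\|u\|_{0}^{2})-\frac14 M(\|u\|_{0}^{2})\|u\|_{0}^{2}+\int_{\R^{3}}(\frac14 f(u)u-F(u))\,dx$ (at $u=w$, resp.\ at $u=t_{w}w$ where $\langle\mathcal{J}_{0}'(t_{w}w),t_{w}w\rangle=0$); the second inequality uses $t_{w}\leq1$, noting that the right-hand side of that identity, evaluated along $u=sw$, is non-decreasing in $s\in(0,1]$ (the $\widehat{M}$-part by the monotonicity of $\tau\mapsto\frac12\widehat{M}(\tau)-\frac14 M(\tau)\tau$, the integral part because $r\mapsto\frac14 f(r)r-F(r)$ is non-decreasing and $0\le sw\le w$); and the last inequality is $t_{w}w\in\N_{0}$ (Remark~\ref{rem4}). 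Hence $\mathcal{J}_{0}(t_{w}w)=c_{0}$.

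Finally, $\bar u:=t_{w}w/\|t_{w}w\|_{0}=m_{0}^{-1}(t_{w}w)$ minimizes $\psi_{0}$ over $\mathbb{S}_{0}^{+}$; being an interior point of $\mathbb{S}_{0}^{+}$ (which is open in $\mathbb{S}_{0}$), it satisfies $\psi_{0}'(\bar u)=0$, so by Proposition~\ref{prop3.1}-$(d)$ the function $t_{w}w=m_{0}(\bar u)$ is a nontrivial critical point of $\mathcal{J}_{0}$, i.e.\ a weak solution of \eqref{P0}. Testing it with its negative part gives that it is non-negative, and then, being nontrivial and solving $M(\|t_{w}w\|_{0}^{2})[(-\Delta)^{s}(t_{w}w)+V_{0}t_{w}w]=f(t_{w}w)\geq0$, it is strictly positive in $\R^{3}$ by the strong maximum principle for the fractional Laplacian. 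The step I expect to be the main obstacle is precisely the passage to the limit: the translation invariance of $\mathcal{J}_{0}$ rules out strong convergence of the $(PS)$ sequence a priori, and --- a difficulty specific to the Kirchhoff term --- the weak limit $w$ only solves the frozen equation with coefficient $M(\ell)$, $\ell=\lim_{n}\|w_{n}\|_{0}^{2}$, rather than \eqref{P0} itself; so the ground state level has to be recovered indirectly through the Nehari projection $t_{w}w$ (with $t_{w}\leq1$ precisely because $w$ overshoots $\N_{0}$) and the monotonicity structure of $M$ and $f$ encoded in $(M_{3})$ and \eqref{rem1}.
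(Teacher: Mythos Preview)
Your proposal is correct and follows essentially the same strategy as the paper. Both arguments produce a bounded $(PS)_{c_{0}}$ sequence, translate (via Lemma~\ref{lem3.2}/Remark~\ref{rem5}) to secure a nontrivial weak limit, and then exploit the identity $\mathcal{J}_{0}(u)-\tfrac14\langle\mathcal{J}_{0}'(u),u\rangle$ together with the monotonicity coming from $(M_{3})$ and \eqref{rem1} and Fatou's lemma to recover the ground-state level. The only cosmetic difference is that the paper argues by contradiction that $M(\|u\|_{0}^{2})=M(t_{0}^{2})$, so the weak limit $u$ itself is the critical point of $\mathcal{J}_{0}$, whereas you run the chain of inequalities directly to obtain $\mathcal{J}_{0}(t_{w}w)=c_{0}$ and then appeal to Proposition~\ref{prop3.1}-$(d)$; since the map $\tau\mapsto\tfrac12\widehat{M}(\tau)-\tfrac14 M(\tau)\tau$ is in fact \emph{strictly} increasing under $(M_{3})$, equality in your chain forces $\ell=\|w\|_{0}^{2}$ and $t_{w}=1$, so the two conclusions coincide.
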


\begin{proof}
Arguing as in the proof of Lemma \ref{lem2.2}, it is easy to check that $\mathcal{J}_{0}$ has a mountain pass geometry. By using Theorem $1.15$ in \cite{Willem}, we know that there exists a Palais-Smale sequence $\{u_{n}\}_{n\in \mathbb{N}}$ for $\mathcal{J}_{0}$ at the level $c_{0}$, that is
\begin{equation*}
\mathcal{J}_{0}(u_{n})\rightarrow c_{0} \quad \mbox{ and } \quad \mathcal{J}_{0}'(u_{n})\rightarrow 0 \mbox{ in } \mathcal{H}_{0}^{-1}. 
\end{equation*}
Let us observe that $\{u_{n}\}_{n\in\mathbb{N}}$ is a bounded sequence in $\mathcal{H}_{0}$. Indeed, by using \eqref{ccc}, assumptions $(f_{3})$ and $(M_{1})$, and taking into account that $\vartheta>4$, we have
\begin{align*}
C+\|u_{n}\|_{0}&\geq \mathcal{J}_{0}(u_{n})-\frac{1}{\vartheta} \langle \mathcal{J}_{0}'(u_{n}), u_{n}\rangle \\
&=\frac{1}{2}\widehat{M}(\|u_{n}\|_{0}^{2})- \frac{1}{\vartheta} M(\|u_{n}\|_{0}^{2})\|u_{n}\|_{0}^{2} + \frac{1}{\vartheta}\int_{\R^{3}} [f(u)u-F(u)] \, dx\\
&\geq \left(\frac{1}{4}- \frac{1}{\vartheta}\right) M(\|u_{n}\|_{0}^{2})\|u_{n}\|_{0}^{2} + \frac{m_{0}}{4} \|u_{n}\|_{0}^{2}\\
&\geq \left( \frac{\vartheta-2}{2\vartheta}\right)m_{0}\|u_{n}\|_{0}^{2},
\end{align*}
which yields the boundedness of $\{u_{n}\}_{n\in\mathbb{N}}$ being $\vartheta>4$.
Therefore, in view of Theorem \ref{Sembedding}, we may assume that
\begin{align}
&u_{n}\rightharpoonup u \mbox{ in } \mathcal{H}_{0}, \label{ter10}\\
&u_{n}\rightarrow u \mbox{ in } L^{p}_{loc}(\R^{3}) \mbox{ for all } 2\leq p< \frac{6}{3-2s}, \label{ter11}\\
&\|u_{n}\|_{0}\rightarrow t_{0} \label{ter12}. 
\end{align}
We recall that $\langle \mathcal{J}_{0}'(u_{n}), \varphi \rangle =o_{n}(1)$ for any $\varphi \in \mathcal{H}_{0}$, that is
\begin{equation}\label{vin2}
M(\|u_{n}\|^{2}_{0})(u_{n}, \varphi)_{0}= \int_{\R^{3}} f(u_{n})\varphi \,dx+ o_{n}(1). 
\end{equation}
Take $\varphi\in C^{\infty}_{c}(\R^{3})$. From \eqref{ter10} we have 
\begin{equation}\label{vin3}
(u_{n}, \varphi)_{0}\rightarrow (u, \varphi)_{0}.
\end{equation}
On the other hand, from $(f_{1})$, $(f_{2})$ and \eqref{ter11} we get
\begin{equation}\label{vin4}
\int_{\R^{3}} f(u_{n}) \,  \varphi\, dx\rightarrow \int_{\R^{3}} f(u)\,  \varphi\, dx. 
\end{equation}
In order to prove that $u$ is a weak solution to \eqref{P0}, it remains to prove that 
\begin{equation}\label{vin5}
M(\|u_{n}\|_{0}^{2})\rightarrow M(t_{0}^{2}).
\end{equation} 
From \eqref{ter12}, we can note that
\begin{equation*}
\|u\|_{0}^{2}\leq \liminf_{n\rightarrow \infty}\|u_{n}\|_{0}^{2}= t_{0}^{2}, 
\end{equation*}
so, by using $(M_{2})$ we deduce that $M(\|u\|_{0}^{2})\leq M(t_{0}^{2})$. At this point our aim is to prove that $M(\|u\|_{0}^{2})= M(t_{0}^{2})$. Assume by contradiction that $M(\|u\|_{0}^{2})< M(t_{0}^{2})$, then
\begin{equation*}
M(\|u\|_{0}^{2})\|u\|_{0}^{2}< M(t_{0}^{2})\|u\|_{0}^{2} = \int_{\R^{3}} f(u)\, u\, dx,
\end{equation*}
that is $\langle \mathcal{J}_{0}'(u), u\rangle<0$. Then, there exists $t\in (0,1)$ such that $t u\in \N_{0}$. Now, by using the assumption $(M_{3})$ and \eqref{rem1} we have 
\begin{align*}
c_{0}&\leq \mathcal{J}_{0}(tu) = \mathcal{J}_{0}(tu)-\frac{1}{4}\langle \mathcal{J}_{0}'(tu), tu \rangle \\
&= \frac{1}{2}\widehat{M}(\|tu\|_{0}^{2}) - \frac{1}{4} M(\|tu\|_{0}^{2})\|tu\|_{0}^{2} + \int_{\R^{3}} \left[\frac{1}{4} f(tu)\,tu-F(tu)\right]\, dx\\
&<\frac{1}{2}\widehat{M}(\|u\|_{0}^{2}) - \frac{1}{4} M(\|u\|_{0}^{2})\|u\|_{0}^{2} + \int_{\R^{3}} \left[\frac{1}{4} f(u)\,u-F(u)\right]\, dx\\
&\leq \liminf_{n\rightarrow \infty} \left\{\frac{1}{2}\widehat{M}(\|u_{n}\|_{0}^{2}) - \frac{1}{4} M(\|u_{n}\|_{0}^{2})\|u_{n}\|_{0}^{2} + \int_{\R^{3}} \left[\frac{1}{4} f(u_{n})\,u_{n}-F(u_{n})\right]\, dx\right\}\\
&= \liminf_{n\rightarrow \infty} \left\{\mathcal{J}_{0}(u_{n})-\frac{1}{4}\langle \mathcal{J}_{0}'(u_{n}), u_{n} \rangle\right\} =c_{0}
\end{align*}
and this is a contradiction. Therefore, putting together \eqref{vin2}, \eqref{vin3}, \eqref{vin4} and \eqref{vin5} we obtain 
\begin{equation*}
M(\|u\|^{2}_{0})(u, \varphi)_{0}= \int_{\R^{3}} f(u)\varphi \,dx \, \mbox{ for any } \varphi \in C^{\infty}_{c}(\R^{3}). 
\end{equation*}
Since $C^{\infty}_{c}(\R^{3})$ is dense in $\mathcal{H}_{0}$, we deduce that $\mathcal{J}'_{0}(u)=0$.\\
Now, we show that $u>0$ in $\R^{3}$. Firstly we prove that $u\geq 0$. Indeed, observing that $\langle \mathcal{J}_{0}'(u), u^{-}\rangle =0$ and by using $(x-y)(x^{-}- y^{-})\leq - |x^{-}-y^{-}|^{2}$ and $f(t)=0$ for $t\leq 0$, we have 
\begin{equation*}
0\leq M(\|u_{0}\|_{0}^{2}) \|u^{-}\|_{0}^{2} \leq 0,
\end{equation*}
which together with $(M_1)$ implies $u\geq 0$.  Next, we show that $u\in C^{0, \alpha}(\R^{3})\cap L^{\infty}(\R^{3})$ for some $\alpha\in (0, 1)$.
Let 
$$
v=\frac{u}{M(\|u\|^{2}_{0})^{\frac{1}{q-2}}}.
$$
Then $v$ is a nonnegative solution to 
$$
(-\Delta)^{s} v+V_{0} v=h(v) \mbox{ in } \R^{3}
$$
where $h$ is the following continuous function
$$
h(v)=\frac{f(v M(\|u\|^{2}_{0})^{\frac{1}{q-2}})}{M(\|u\|^{2}_{0})^{\frac{q-1}{q-2}}}.
$$
By using $(f_3)$, we can see that 
$$
\lim_{t\rightarrow \infty} \frac{h(t)}{t^{q-1}}=\lim_{t\rightarrow \infty} \frac{f(t M(\|u\|^{2}_{0})^{\frac{1}{q-2}})}{\left(t M(\|u\|^{2}_{0})^{\frac{1}{q-2}}\right)^{q-1}}=0,
$$
so we deduce that $h(t)\leq C(1+|t|^{q-1})$ for any $t\in \R$. Then, by applying Theorem $2.3$ in \cite{DMPV}, we deduce that $v\in L^{\infty}(\R^{3})$. In view of Proposition $2.9$ in \cite{S}, we obtain that $v\in C^{0, \beta}(\R^{3})$ for some $\beta\in (0, 1)$. From the Harnack inequality \cite{CabS} we get $v>0$ in $\R^{3}$. Therefore $u\in C^{0, \alpha}(\R^{3})\cap L^{\infty}(\R^{3})$ is a positive solution \eqref{P0} and this ends the proof of theorem.
\end{proof}

\noindent
The next result is a compactness result on the autonomous problem which we will use later.
\begin{lem}\label{lem3.3}
Let $\{u_{n}\}_{n\in \mathbb{N}}\subset \N_{0}$ be a sequence such that $\mathcal{J}_{0}(u_{n})\rightarrow c_{0}$. Then  
$\{u_{n}\}_{n\in \mathbb{N}}$ has a convergent subsequence in $H^{s}(\R^{3})$.
\end{lem}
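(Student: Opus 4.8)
The plan is to upgrade the minimizing sequence on $\N_{0}$ to a genuine Palais--Smale sequence for $\mathcal{J}_{0}$ and then run a Lions-type compactness argument. Since $\mathcal{J}_{0}$ is translation invariant, the conclusion has to be understood up to a sequence of translations $u_{n}\mapsto u_{n}(\cdot+y_{n})$ (this is the form in which the lemma is later used). First, arguing exactly as in the boundedness estimate in the proof of Theorem~\ref{thm3.1} — i.e.\ bounding $\mathcal{J}_{0}(u_{n})-\tfrac{1}{\vartheta}\langle\mathcal{J}_{0}'(u_{n}),u_{n}\rangle=\mathcal{J}_{0}(u_{n})$ from below by means of \eqref{ccc}, $(M_{1})$, $(f_{3})$ and $\vartheta>4$ — the sequence $\{u_{n}\}$ is bounded in $\mathcal{H}_{0}$; moreover the Nehari identity together with $(f_{1})$--$(f_{2})$ and the Sobolev embedding gives that $\|u_{n}\|_{0}$ is bounded away from $0$. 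Setting $v_{n}:=m_{0}^{-1}(u_{n})=u_{n}/\|u_{n}\|_{0}\in\mathbb{S}_{0}^{+}$, Lemma~\ref{lem3.1}-$(iii)$ and Remark~\ref{rem4} give $\psi_{0}(v_{n})=\mathcal{J}_{0}(u_{n})\to c_{0}=\inf_{\mathbb{S}_{0}^{+}}\psi_{0}$; since this stays bounded, Lemma~\ref{lem3.1}-$(iv)$ forbids ${\rm dist}(v_{n},\partial\mathbb{S}_{0}^{+})\to 0$, so Ekeland's variational principle applies on $\mathbb{S}_{0}^{+}$ and produces $\{\hat v_{n}\}\subset\mathbb{S}_{0}^{+}$ with $\psi_{0}(\hat v_{n})\to c_{0}$, $\psi_{0}'(\hat v_{n})\to 0$ and $\|\hat v_{n}-v_{n}\|_{0}\to 0$. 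By Proposition~\ref{prop3.1}-$(c)$, $w_{n}:=m_{0}(\hat v_{n})$ is a $(PS)_{c_{0}}$ sequence for $\mathcal{J}_{0}$, and since $m_{0}$ is continuous $\|w_{n}-u_{n}\|_{0}=\|m_{0}(\hat v_{n})-m_{0}(v_{n})\|_{0}\to 0$, so it suffices to prove the statement for $\{w_{n}\}$; passing to a subsequence, $w_{n}\rightharpoonup w$ in $\mathcal{H}_{0}$ with $\|w_{n}\|_{0}^{2}\to t_{0}^{2}$.

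Next I would show that $w\neq 0$, up to translations: if $w=0$, the uniform lower bound $\|w_{n}\|_{0}\geq\kappa>0$ rules out vanishing, so the non-vanishing alternative in Lemma~\ref{lem3.2} holds and, as in Remark~\ref{rem5}, replacing $\{w_{n}\}$ by suitable translates $w_{n}(\cdot+y_{n})$ (legitimate since $\mathcal{J}_{0}$ is autonomous) we may assume $w\neq 0$. Then, testing $\mathcal{J}_{0}'(w_{n})\to 0$ against $\varphi\in C^{\infty}_{c}(\R^{3})$ and letting $n\to\infty$ — the coefficient $M(\|w_{n}\|_{0}^{2})$ converges along a subsequence, and one checks that its limit equals $M(\|w\|_{0}^{2})$ by repeating verbatim the contradiction argument in the proof of Theorem~\ref{thm3.1} (based on $(M_{2})$, $(M_{3})$ and \eqref{rem1}) — one obtains $\mathcal{J}_{0}'(w)=0$, so $w\in\N_{0}$ and hence $\mathcal{J}_{0}(w)\geq c_{0}$.

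Finally, for the strong convergence I would set $\Phi(t):=\tfrac{1}{2}\widehat M(t)-\tfrac{1}{4}M(t)t$ and observe that, thanks to $(M_{3})$ (see \eqref{ccc}), $\Phi$ is nonnegative and strictly increasing on $[0,\infty)$, while by \eqref{rem1} the integrand $\tfrac{1}{4}f(u)u-F(u)$ is nonnegative. Since $\{w_{n}\}$ is a bounded $(PS)$ sequence, $\mathcal{J}_{0}(w_{n})-\tfrac{1}{4}\langle\mathcal{J}_{0}'(w_{n}),w_{n}\rangle$ equals $\Phi(\|w_{n}\|_{0}^{2})+\int_{\R^{3}}[\tfrac{1}{4}f(w_{n})w_{n}-F(w_{n})]\,dx$ up to $o_{n}(1)$; passing to the limit using $\|w_{n}\|_{0}^{2}\to t_{0}^{2}$ and Fatou's lemma, using $\|w\|_{0}^{2}\leq t_{0}^{2}$, and comparing with $\mathcal{J}_{0}(w)=\Phi(\|w\|_{0}^{2})+\int_{\R^{3}}[\tfrac{1}{4}f(w)w-F(w)]\,dx\geq c_{0}$, all the inequalities collapse to equalities; in particular $\Phi(\|w\|_{0}^{2})=\Phi(t_{0}^{2})$, which forces $\|w\|_{0}^{2}=t_{0}^{2}$, i.e.\ $\|w_{n}\|_{0}\to\|w\|_{0}$. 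As $\mathcal{H}_{0}$ is a Hilbert space and $w_{n}\rightharpoonup w$, this yields $w_{n}\to w$ in $\mathcal{H}_{0}$, and therefore $\{u_{n}\}$ converges in $H^{s}(\R^{3})$ along a subsequence (up to the translations of the previous step).

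I expect the main obstacle to be the absence of a global Palais--Smale condition for $\mathcal{J}_{0}$, caused by its translation invariance: this is precisely why one must route the argument through Lemma~\ref{lem3.2}/Remark~\ref{rem5}, and why the statement is only meaningful modulo translations. Within that scheme, the two delicate points are the exclusion of vanishing (handled by the uniform lower bound on $\N_{0}$) and the control of the Kirchhoff factor $M(\|w_{n}\|_{0}^{2})$ when passing to the limit, which is carried out exactly as in the proof of Theorem~\ref{thm3.1}.
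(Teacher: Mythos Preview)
Your approach is essentially the one taken in the paper: pass to $v_{n}=m_{0}^{-1}(u_{n})\in\mathbb{S}_{0}^{+}$, apply Ekeland's principle to obtain a $(PS)_{c_{0}}$ sequence for $\psi_{0}$ close to $\{v_{n}\}$, lift it back to a $(PS)_{c_{0}}$ sequence for $\mathcal{J}_{0}$ via Proposition~\ref{prop3.1}, and then conclude by the argument of Theorem~\ref{thm3.1} (together with the translation step of Remark~\ref{rem5}). The paper compresses the last part into the sentence ``by using Proposition~\ref{prop3.1}, Theorem~\ref{thm3.1} and arguing as in the proof of Corollary~\ref{cor2.1}'', whereas you spell out the norm-convergence argument explicitly; your observation that $(M_{3})$ forces $\Phi(t)=\tfrac{1}{2}\widehat{M}(t)-\tfrac{1}{4}M(t)t$ to be strictly increasing (indeed $\Phi'(t)\geq \tfrac{m_{0}}{4}$ from $(M_{3})$) is exactly what turns $M(\|w\|_{0}^{2})=M(t_{0}^{2})$ into $\|w\|_{0}=t_{0}$. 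The only cosmetic difference is that the paper applies Ekeland on the completion $\overline{\mathbb{S}}_{0}^{+}$ after extending $\psi_{0}$ by $+\infty$ on $\partial\mathbb{S}_{0}^{+}$, while you invoke Lemma~\ref{lem3.1}-$(iv)$ to stay inside $\mathbb{S}_{0}^{+}$; both are fine.
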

\begin{proof}
Since $\{u_{n}\}_{n\in \mathbb{N}}\subset \mathcal{N}_{0}$ and $\mathcal{J}_{0}(u_{n})\rightarrow c_{0}$, we can apply Lemma \ref{lem3.1}-$(iii)$ and Proposition \ref{prop3.1}-$(d)$ and Remark \ref{rem4} to infer that
$$
v_{n}=m^{-1}(u_{n})=\frac{u_{n}}{\|u_{n}\|_{0}}\in \mathbb{S}_{0}^{+}
$$
and
$$
\psi_{0}(v_{n})=\mathcal{J}_{0}(u_{n})\rightarrow c_{0}=\inf_{v\in \mathbb{S}_{0}^{+}}\psi_{0}(v).
$$
Let us introduce the following map $\mathcal{F}: \overline{\mathbb{S}}_{0}^{+}\rightarrow \R\cup \{\infty\}$ defined by setting
$$
\mathcal{F}(u):=
\begin{cases}
\psi_{0}(u)& \text{ if $u\in \mathbb{S}_{0}^{+}$} \\
\infty   & \text{ if $u\in \partial \mathbb{S}_{0}^{+}$}.
\end{cases}
$$ 
We note that 
\begin{itemize}
\item $(\overline{\mathbb{S}}_{0}^{+}, d_{0})$, where $d(u, v)=\|u-v\|_{0}$, is a complete metric space;
\item $\mathcal{F}\in C(\overline{\mathbb{S}}_{0}^{+}, \R\cup \{\infty\})$, by Lemma \ref{lem3.1}-$(iii)$;
\item $\mathcal{F}$ is bounded below, by Proposition \ref{prop3.1}-$(d)$.
\end{itemize}
Hence, by applying the Ekeland's variational principle \cite{Ekeland} to $\mathcal{F}$, we can find $\{\hat{v}_{n}\}_{n\in \mathbb{N}}\subset \mathbb{S}_{0}^{+}$ such that $\{\hat{v}_{n}\}_{n\in \mathbb{N}}$ is a $(PS)_{c_{0}}$ sequence for $\psi_{0}$ on $\mathbb{S}_{0}^{+}$ and $\|\hat{v}_{n}-v_{n}\|_{0}=o_{n}(1)$.
Then, by using Proposition \ref{prop3.1}, Theorem \ref{thm3.1} and arguing as in the proof of Corollary \ref{cor2.1} we obtain the thesis.
\end{proof}

\section{Barycenter map and multiplicity of solutions to \eqref{Pea}}

\noindent
In this section, our main purpose is to apply the Ljusternik-Schnirelmann category theory to prove a multiplicity result for the problem \eqref{Pea}. We begin by proving the following technical results.
\begin{lem}\label{lem3.1N}
Let $\e_{n}\rightarrow 0^{+}$ and $\{u_{n}\}_{n\in \mathbb{N}}\subset \N_{\e_{n}}$ be such that $\mathcal{J}_{\e_{n}}(u_{n})\rightarrow c_{0}$. Then there exists $\{\tilde{y}_{n}\}_{n\in \mathbb{N}}\subset \R^{3}$ such that the translated sequence 
\begin{equation*}
\tilde{u}_{n}(x):=u_{n}(x+ \tilde{y}_{n})
\end{equation*}
has a subsequence which converges in $H^{s}(\R^{3})$. Moreover, up to a subsequence, $\{y_{n}\}_{n\in \mathbb{N}}:=\{\e_{n}\tilde{y}_{n}\}_{n\in \mathbb{N}}$ is such that $y_{n}\rightarrow y_{0}\in \Lambda$. 
\end{lem}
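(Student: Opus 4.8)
The plan is to mimic the classical concentration-compactness argument adapted to the Schr\"odinger-Kirchhoff setting. First I would establish that $\{u_n\}$ is bounded in $H^s(\R^3)$: since $u_n\in\N_{\e_n}$ and $\mathcal{J}_{\e_n}(u_n)\to c_0$, the inequality \eqref{cancan} (which holds uniformly in $\e$) gives $c_0+o_n(1)\geq\left(\frac{\vartheta-2}{2\vartheta}\right)\left(m_0-\frac{1}{K}\right)\|u_n\|_{\e_n}^2$, hence $\sup_n\|u_n\|_{\e_n}<\infty$ and in particular $\sup_n[u_n]^2+\int_{\R^3}u_n^2\,dx<\infty$ because $V(\e_n x)\geq V_0>0$. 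Next I would rule out vanishing: if $\lim_n\sup_{y\in\R^3}\int_{B_R(y)}u_n^2\,dx=0$, then by Lemma \ref{Lions} $u_n\to0$ in $L^r(\R^3)$ for $r\in(2,2^*_s)$, and since $g(\e_nx,u_n)\leq f(u_n)$ with $f$ controlled via $(f_1)$-$(f_2)$ by $\xi t^3+C_\xi t^{q-1}$ (note $4,q\in(2,2^*_s)$ thanks to $s>\tfrac34$), we get $\int g(\e_nx,u_n)u_n\,dx\to0$; then $m_0\|u_n\|_{\e_n}^2\leq M(\|u_n\|_{\e_n}^2)\|u_n\|_{\e_n}^2=\int g(\e_nx,u_n)u_n\,dx+o_n(1)\to0$, contradicting $c_0>0$. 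So there exist $\{\tilde y_n\}\subset\R^3$, $R,\beta>0$ with $\liminf_n\int_{B_R(\tilde y_n)}u_n^2\,dx\geq\beta>0$. Set $\tilde u_n(x):=u_n(x+\tilde y_n)$; then $\{\tilde u_n\}$ is bounded in $H^s(\R^3)$, so up to a subsequence $\tilde u_n\rightharpoonup\tilde u$ in $H^s(\R^3)$ with $\tilde u\neq0$ (the concentration passes to the weak limit since $\int_{B_R(0)}\tilde u^2\,dx\geq\beta$).

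The second block is to control $y_n:=\e_n\tilde y_n$. I would first show $\{y_n\}$ is bounded: if not, pass to a subsequence with $|y_n|\to\infty$. Write the equation satisfied by $\tilde u_n$ and test against $\tilde u_n$; using $(M_1)$ one gets $m_0\|\tilde u_n\|_{\e_n}^2\leq M(\|\tilde u_n\|_{\e_n}^2)\|\tilde u_n\|_{\e_n}^2=\int_{\R^3}g(\e_nx+y_n,\tilde u_n)\tilde u_n\,dx$. Since $\Omega$ is bounded and $|y_n|\to\infty$, for $n$ large $\e_nx+y_n\notin\Omega$ whenever $x$ stays in any fixed ball; more precisely, splitting $\R^3=\{\e_nx+y_n\in\Omega\}\cup\{\e_nx+y_n\notin\Omega\}$ and using $(g_3)$-(ii) on the second set, $\int_{\{\e_n x+y_n\notin\Omega\}}g(\e_nx+y_n,\tilde u_n)\tilde u_n\,dx\leq\frac1K\int V(\e_nx+y_n)\tilde u_n^2\,dx\leq\frac1K\|\tilde u_n\|_{\e_n}^2$, while the first set has measure $|\Omega_{\e_n}(-y_n)|$ on which one uses $(f_1)$-$(f_2)$ and the fact that $\tilde u_n\to\tilde u$ strongly in $L^p_{loc}$ to show that contribution is $o_n(1)$ after a further translation argument — actually the cleanest route is: the set $\{x:\e_n x+y_n\in\Omega\}$ has the form $\frac{1}{\e_n}(\Omega-y_n)$, and one shows $\tilde u_n\to 0$ in $L^2$ of that set. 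Combining, $m_0\|\tilde u_n\|_{\e_n}^2\leq\frac1K\|\tilde u_n\|_{\e_n}^2+o_n(1)$, and since $\frac1K<\frac{m_0}{2}$ this forces $\|\tilde u_n\|_{\e_n}\to0$, contradicting $\tilde u\neq0$. Hence $\{y_n\}$ is bounded and, up to a subsequence, $y_n\to y_0\in\overline\Omega$.

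The third block identifies $y_0$ and upgrades weak to strong convergence. I would show $V(y_0)=V_0$, which gives $y_0\in\Lambda$ (note $y_0\notin\partial\Omega$ since $V_0<\min_{\partial\Omega}V$). The idea: $c_0=\lim_n\mathcal{J}_{\e_n}(u_n)$ and by Fatou together with $V(\e_nx+y_n)\to V(y_0)$ and the lower semicontinuity of the Gagliardo seminorm, one obtains that $\tilde u$ is (after the natural limiting argument) a nontrivial solution of the autonomous problem with potential $V(y_0)$, so its energy is $\geq c_{V(y_0)}$, the ground state level for $\mathcal{J}_{V(y_0)}$. Since $t\mapsto c_t$ is strictly increasing in $t$ (a standard comparison using $(M_1)$-$(M_3)$ and $(f_4)$) and $V(y_0)\geq V_0$, one has $c_{V(y_0)}\geq c_0$. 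On the other hand, a Fatou/weak-lower-semicontinuity estimate applied to $\mathcal{J}_{\e_n}(u_n)-\frac14\langle\mathcal{J}_{\e_n}'(u_n),u_n\rangle$ (using \eqref{rem1} and \eqref{ccc}, exactly as in the proof of Theorem \ref{thm3.1}) gives $c_0=\lim_n\mathcal{J}_{\e_n}(u_n)\geq\mathcal{J}_{V(y_0)}(\tilde u)\geq c_{V(y_0)}$, whence $c_{V(y_0)}=c_0$ and therefore $V(y_0)=V_0$, i.e. $y_0\in\Lambda$. Finally, this chain of inequalities is in fact a chain of equalities, which forces $\|\tilde u_n\|\to\|\tilde u\|$ in the relevant norm (the $L^2$-norms and the Gagliardo seminorms converge separately by a Brezis-Lieb/Fatou squeeze), so $\tilde u_n\to\tilde u$ strongly in $H^s(\R^3)$.

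The main obstacle I anticipate is the careful handling of the term $\int g(\e_n x+y_n,\tilde u_n)\tilde u_n\,dx$ when $|y_n|\to\infty$: one must be precise about how the "good" region $\{\e_nx+y_n\in\Omega\}$ escapes and use the local $L^p$-convergence of $\tilde u_n$ correctly, possibly invoking a vanishing-type argument on that shrinking set, so that the penalized nonlinearity genuinely behaves like $\frac{V_0}{K}t$ there and the coercivity-versus-$\frac1K$ contradiction closes. The second delicate point is making rigorous the passage to the limit showing $\tilde u$ solves the autonomous equation at level $V(y_0)$ with the Kirchhoff factor $M(\|\tilde u_n\|^2)\to M(\|\tilde u\|^2)$; this requires repeating the argument from Theorem \ref{thm3.1} (ruling out $M(\|\tilde u\|_0^2)<M(t_0^2)$ via the $\tfrac14$-trick) in the translated, $\e_n$-dependent setting, and justifying $V(\e_nx+\e_n\tilde y_n)\to V(y_0)$ inside the integrals by dominated convergence using the boundedness of $V$ on compacts and the uniform control on $\int V(\e_n x+y_n)\tilde u_n^2\,dx$.
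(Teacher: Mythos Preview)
Your overall strategy (boundedness, non-vanishing via Lemma~\ref{Lions}, locating $y_0\in\Lambda$) is correct, but the \emph{order} of the steps creates a real gap that you yourself flag as ``the main obstacle.'' The paper proceeds differently and the difference is essential: it obtains strong convergence $\tilde u_n\to\tilde u$ in $H^s(\R^3)$ \emph{before} proving boundedness of $\{y_n\}$, not after. Concretely, once $\tilde u_n\rightharpoonup\tilde u\neq 0$, the paper picks $t_n>0$ with $\tilde v_n:=t_n\tilde u_n\in\N_0$ and uses the chain $c_0\leq \mathcal J_0(\tilde v_n)\leq \mathcal J_{\e_n}(t_n u_n)\leq \mathcal J_{\e_n}(u_n)=c_0+o_n(1)$ (valid because $g\leq f$ and $V\geq V_0$ together with $(M_2)$). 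This gives $\tilde v_n\in\N_0$ with $\mathcal J_0(\tilde v_n)\to c_0$, and then Lemma~\ref{lem3.3} (the autonomous compactness lemma) yields $\tilde v_n\to\tilde v$ strongly; since $t_n\to t_0>0$, also $\tilde u_n\to\tilde u$ strongly. With strong convergence in hand, the tail term $\int_{\R^3\setminus B_{R/\e_n}(0)}f(\tilde u_n)\tilde u_n\,dx$ is $o_n(1)$ by dominated convergence, and the $|y_n|\to\infty$ contradiction closes immediately.

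By contrast, in your ordering you need $\int_{\{x:\e_n x+y_n\in\Omega\}}f(\tilde u_n)\tilde u_n\,dx\to 0$ with only weak convergence of $\tilde u_n$. That set is contained in $\R^3\setminus B_{R/\e_n}(0)$ (it escapes to infinity, it does not shrink), and weak convergence in $H^s$ together with local $L^p$-compactness gives no control there; boundedness alone only gives a uniform bound, not decay of the tail. Your suggested ``vanishing-type argument on that shrinking set'' does not apply. A second issue: in your third block you want $\tilde u$ to be a solution of the autonomous problem with potential $V(y_0)$, but nothing in the hypotheses says $\mathcal J'_{\e_n}(u_n)\to 0$ (the $u_n$ are only on the Nehari manifold, not a Palais--Smale sequence), so you cannot pass to the limit in the equation; and $\mathcal J_{V(y_0)}(\tilde u)\geq c_{V(y_0)}$ would require $\tilde u\in\N_{V(y_0)}$, which you have not established. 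The paper avoids all of this: it never claims $\tilde u$ solves anything, it just uses Fatou on $\mathcal J_{\e_n}(t_n u_n)$ with the strongly convergent $\tilde v_n\in\N_0$ to get $c_0<\liminf_n\mathcal J_{\e_n}(t_n u_n)\leq c_0$ if $V(y_0)>V_0$, a direct contradiction.

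In short: insert the projection $\tilde v_n=t_n\tilde u_n\in\N_0$ and invoke Lemma~\ref{lem3.3} right after obtaining $\tilde u\neq 0$. This single device gives you strong convergence for free and makes both subsequent steps (boundedness of $y_n$, and $V(y_0)=V_0$) straightforward, eliminating the obstacles you identified.
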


\begin{proof}
Since $\langle \mathcal{J}'_{\e_{n}}(u_{n}), u_{n} \rangle=0$ and $\mathcal{J}_{\e_{n}}(u_{n})\rightarrow c_{0}$, it is easy to see that $\{u_{n}\}_{n\in \mathbb{N}}$ is bounded. 
Let us observe that $\|u_{n}\|_{\e_{n}}\nrightarrow 0$ since $c_{0}>0$. Therefore, arguing as in Remark \ref{rem5}, we can find a sequence $\{\tilde{y}_{n}\}_{n\in \mathbb{N}}\subset \R^{3}$ and constants $R, \alpha>0$ such that
\begin{equation*}
\liminf_{n\rightarrow \infty}\int_{B_{R}(\tilde{y}_{n})} |u_{n}|^{2} dx\geq \alpha.
\end{equation*}
Set $\tilde{u}_{n}(x):=u_{n}(x+ \tilde{y}_{n})$. Then it is clear that $\{\tilde{u}_{n}\}_{n\in \mathbb{N}}$ is bounded in $H^{s}(\R^{3})$, and we may assume that 
\begin{equation*}
\tilde{u}_{n}\rightharpoonup \tilde{u} \mbox{ weakly in } H^{s}(\R^{3}),  
\end{equation*}
for some $\tilde{u}\neq 0$.\\
Let $\{t_{n}\}_{n\in \mathbb{N}}\subset (0, +\infty)$ be such that $\tilde{v}_{n}:=t_{n}\tilde{u}_{n} \in \N_{0}$ (see Lemma \ref{lem3.1}-$(i)$), and set $y_{n}:=\e_{n}\tilde{y}_{n}$.  \\
Then, by using $(M_{2})$ and $g(x, t)\leq f(t)$, we can see that
\begin{align*}
c_{0}\leq \mathcal{J}_{0}(\tilde{v}_{n})&= \frac{1}{2} \widehat{M}(t_{n}^{2}\|u_{n}\|^{2}_{0}) - \int_{\R^{3}} F(t_{n} u_{n})\, dx \nonumber\\
&\leq \frac{1}{2} \widehat{M}(t_{n}^{2}\|u_{n}\|^{2}_{\e_{n}}) - \int_{\R^{N}} G(\e x, t_{n} u_{n})\, dx \nonumber\\
&=\mathcal{J}_{\e_{n}}(t_{n}u_{n}) \leq \mathcal{J}_{\e_{n}}(u_{n})= c_{0}+ o_{n}(1),
\end{align*}
which gives 
\begin{align}\label{3.21}
\mathcal{J}_{0}(\tilde{v}_{n})\rightarrow c_{0} \,\mbox{ and } \,\{\tilde{v}_{n}\}_{n\in \mathbb{N}}\subset \N_{0}. 
\end{align}
In particular, \eqref{3.21} implies that $\{\tilde{v}_{n}\}_{n\in \mathbb{N}}$ is bounded in $H^{s}(\R^{3})$, so we may assume that $\tilde{v}_{n}\rightharpoonup \tilde{v}$. Obviously, $\{t_{n}\}_{n\in \mathbb{N}}$ is bounded and it results $t_{n}\rightarrow t_{0}\geq 0$. If $t_{0}=0$, from the boundedness of $\{\tilde{u}_{n}\}_{n\in \mathbb{N}}$, we get $\|\tilde{v}_{n}\|_{0}= t_{n}\|\tilde{u}_{n}\|_{0} \rightarrow 0$, that is $\mathcal{J}_{0}(\tilde{v}_{n})\rightarrow 0$ in contrast with the fact $c_{0}>0$. Then, $t_{0}>0$. From the uniqueness of the weak limit we have $\tilde{v}=t_{0} \tilde{u}$ and $\tilde{u}\neq 0$. By using Lemma \ref{lem3.3} we deduce that 
\begin{align}\label{3.22}
\tilde{v}_{n}\rightarrow \tilde{v} \mbox{ in } H^{s}(\R^{3}),
\end{align}
which implies that $\displaystyle{\tilde{u}_{n}=\frac{\tilde{v}_{n}}{t_{n}}\rightarrow \frac{\tilde{v}}{t_{0}}=\tilde{u}}$ in $H^{s}(\R^{3})$ and
\begin{equation*}
\mathcal{J}_{0}(\tilde{v})=c_{0} \, \mbox{ and } \, \langle\mathcal{J}'_{0}(\tilde{v}), \tilde{v}\rangle=0.
\end{equation*}
Now, we show that $\{y_{n}\}_{n\in \mathbb{N}}$ has a subsequence such that $y_{n}\rightarrow y_{0}\in \Lambda$. 
Assume by contradiction that $\{y_{n}\}_{n\in \mathbb{N}}$ is not bounded, that is there exists a subsequence, still denoted by $\{y_{n}\}_{n\in \mathbb{N}}$, such that $|y_{n}|\rightarrow +\infty$. 
Since $u_{n}\in \N_{\e_{n}}$, we can see that
\begin{align*}
&m_{0}\|\tilde{u}_{n}\|_{0}^{2}\leq \int_{\R^{3}} g(\e_{n} x+y_{n}, \tilde{u}_{n})\tilde{u}_{n}\, dx.
\end{align*}
Take $R>0$ such that $\Omega \subset B_{R}(0)$. We may assume that $|y_{n}|>2R$, so, for any $x\in B_{R/\e_{n}}(0)$ we get $|\e_{n} x+y_{n}|\geq |y_{n}|-|\e_{n} x|>R$.\\
Then, we deduce that
\begin{align*}
&m_{0}\|v_{n}\|_{0}^{2}\leq \int_{B_{R/\e_{n}}(0)}  \tilde{f}(\tilde{u}_{n}) \tilde{u}_{n} \,dx+\int_{\R^{3}\setminus B_{R/\e_{n}}(0)} f(\tilde{u}_{n}) \tilde{u}_{n} \, dx.
\end{align*}
Since $\tilde{u}_{n}\rightarrow \tilde{u}$ in $H^{s}(\R^{3})$, from the Dominated Convergence Theorem we can see that 
\begin{align*}
\int_{\R^{3}\setminus B_{R/\e_{n}}(0)} f(\tilde{u}_{n}) \tilde{u}_{n} \, dx=o_{n}(1).
\end{align*}
Recalling that $\tilde{f}(\tilde{u}_{n})\leq \frac{V_{0}}{K} \tilde{u}_{n}$, we get
\begin{align*}
m_{0}\|\tilde{u}_{n}\|^{2}_{0}\leq \frac{1}{K} \int_{B_{R/\e_{n}}(0)}  V_{0} \tilde{u}^{2}_{n} \,dx+o_{n}(1),
\end{align*}
which yields
$$
\left(m_{0}-\frac{1}{K}\right)\|\tilde{u}_{n}\|^{2}_{0}\leq o_{n}(1).
$$
Since $\tilde{u}_{n}\rightarrow \tilde{u}\neq 0$, we have a contradiction.
Thus $\{y_{n}\}_{n\in \mathbb{N}}$ is bounded and, up to a subsequence, we may assume that $y_{n}\rightarrow y_{0}$. If $y_{0}\notin \overline{\Omega}$, then there exists $r>0$ such that $y_{n}\in B_{r/2}(y_{0})\subset \R^{3}\setminus \overline{\Omega}$ for any $n$ large enough. Reasoning as before, we get a contradiction. Hence $y\in \overline{\Omega}$. \\
Now, we prove that $V(y_{0})=V_{0}$. Assume by contradiction that $V(y_{0})>V_{0}$.
Taking into account \eqref{3.22}, Fatou's Lemma and the invariance of $\R^{3}$ by translations, we have
\begin{align*}
c_{0}&< \liminf_{n\rightarrow \infty} \Bigl[ \frac{1}{2} \widehat{M}\left(\int_{\R^{3}} |(-\Delta)^{\frac{s}{2}} \tilde{v}_{n}|^{2}+V(\e_{n} z+y_{n}) \tilde{v}^{2}_{n} \right)- \int_{\R^{3}} F(\tilde{v}_{n})\, dx \Bigr] \nonumber\\
&\leq \liminf_{n\rightarrow \infty} \mathcal{J}_{\e_{n}}(t_{n}u_{n}) \leq \liminf_{n\rightarrow \infty} \mathcal{J}_{\e_{n}} (u_{n})=c_{0}
\end{align*}
which gives a contradiction. 
\end{proof}

\noindent
Now, we aim to relate the number of positive solutions of \eqref{Pea} to the topology of the set $\Lambda$.
For this reason, we take $\delta>0$ such that
$$
\Lambda_{\delta}=\{x\in \R^{3}: {\rm dist}(x, \Lambda)\leq \delta\}\subset \Omega,
$$
and we consider $\eta\in C^{\infty}_{0}(\R_{+}, [0, 1])$ such that $\eta(t)=1$ if $0\leq t\leq \frac{\delta}{2}$ and $\eta(t)=0$ if $t\geq \delta$.\\
For any $y\in \Lambda$, we define 
$$
\Psi_{\e, y}(x)=\eta(|\e x-y|) w\left(\frac{\e x-y}{\e}\right) 
$$
where $w\in H^{s}(\R^{3})$ is a positive ground state solution to the autonomous problem \eqref{P0} (such a solution exists in view of Theorem \ref{thm3.1}).\\
Let $t_{\e}>0$ be the unique number such that 
$$
\max_{t\geq 0} \mathcal{J}_{\e}(t \Psi_{\e, y})=\mathcal{J}_{\e}(t_{\e} \Psi_{\e, y}). 
$$
Finally, we consider $\Phi_{\e}: \Lambda\rightarrow \N_{\e}$ defined by setting
$$
\Phi_{\e}(y)= t_{\e} \Psi_{\e, y}.
$$
\begin{lem}\label{lem3.4}
The functional $\Phi_{\e}$ satisfies the following limit
\begin{equation*}
\lim_{\e\rightarrow 0} \mathcal{J}_{\e}(\Phi_{\e}(y))=c_{0} \mbox{ uniformly in } y\in \Lambda.
\end{equation*}
\end{lem}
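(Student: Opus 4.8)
The plan is to argue by contradiction. Suppose the conclusion fails; then there exist $\delta_{0}>0$, a sequence $\e_{n}\to 0^{+}$ and points $y_{n}\in\Lambda$ such that $|\mathcal{J}_{\e_{n}}(\Phi_{\e_{n}}(y_{n}))-c_{0}|\geq\delta_{0}$ for every $n$. Performing the change of variable $z=x-y_{n}/\e_{n}$ and using the translation invariance of the Gagliardo seminorm, one has $\Psi_{\e_{n},y_{n}}(z+y_{n}/\e_{n})=\eta(\e_{n}|z|)\,w(z)=:\hat\Psi_{n}(z)$, so that $\supp\hat\Psi_{n}\subset\{|z|\leq\delta/\e_{n}\}$ and, writing $A_{n}:=\|\Psi_{\e_{n},y_{n}}\|_{\e_{n}}^{2}$,
\[
A_{n}=[\hat\Psi_{n}]^{2}+\int_{\R^{3}}V(\e_{n}z+y_{n})\,\hat\Psi_{n}^{2}\,dz .
\]
The core of the argument is to show that $\hat\Psi_{n}\to w$ strongly in $\h$ and that $A_{n}\to\|w\|_{0}^{2}$.

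For the first claim I would apply Lemma \ref{pp} with $\varphi(z):=\eta(|z|)$, which lies in $C_{c}^{\infty}(\R^{3})$ and equals $1$ in a neighbourhood of the origin because $\eta\equiv1$ on $[0,\delta/2]$: taking $r=1/\e_{n}\to\infty$ one has $\hat\Psi_{n}=w\,\varphi_{1/\e_{n}}$ with $w\in\h\subset\mathcal{D}^{s,2}(\R^{3})$, so Lemma \ref{pp} gives $[\hat\Psi_{n}]^{2}\to[w]^{2}$. Since $0\leq\hat\Psi_{n}\leq w$ and $\hat\Psi_{n}(z)\to w(z)$ pointwise, dominated convergence yields $\hat\Psi_{n}\to w$ in $L^{2}(\R^{3})$; being $\{\hat\Psi_{n}\}$ bounded in $\h$ it follows that $\hat\Psi_{n}\rightharpoonup w$, and together with the convergence of the $\h$-norm we get $\hat\Psi_{n}\to w$ strongly in $\h$, hence $\hat\Psi_{n}\to w$ in $L^{r}(\R^{3})$ for every $r\in[2,2^{*}_{s}]$ by Theorem \ref{Sembedding}. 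For the potential term, $\{y_{n}\}\subset\Lambda$ is bounded, $V(\e_{n}z+y_{n})\to V(y_{n})=V_{0}$ for each fixed $z$ by continuity of $V$, and on $\supp\hat\Psi_{n}$ the point $\e_{n}z+y_{n}$ lies in the fixed bounded set $\Lambda_{\delta}$, so $V(\e_{n}z+y_{n})\hat\Psi_{n}^{2}\leq C\,w^{2}\in L^{1}(\R^{3})$; dominated convergence then gives $\int_{\R^{3}}V(\e_{n}z+y_{n})\hat\Psi_{n}^{2}\,dz\to V_{0}\|w\|_{L^{2}}^{2}$, whence $A_{n}\to[w]^{2}+V_{0}\|w\|_{L^{2}}^{2}=\|w\|_{0}^{2}$.

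It remains to control $t_{n}:=t_{\e_{n}}$. Since on $\supp\hat\Psi_{n}$ we have $\e_{n}z+y_{n}\in\Lambda_{\delta}\subset\Omega$, the definition of $g$ gives $g(\e_{n}z+y_{n},\cdot)=f(\cdot)$ and $G(\e_{n}z+y_{n},\cdot)=F(\cdot)$ there, so $t_{n}\Psi_{\e_{n},y_{n}}\in\N_{\e_{n}}$ reads
\[
M\bigl(t_{n}^{2}A_{n}\bigr)\,t_{n}^{2}A_{n}=\int_{\R^{3}}f(t_{n}\hat\Psi_{n})\,t_{n}\hat\Psi_{n}\,dz .
\]
From $(M_{1})$, $(f_{1})$, $(f_{2})$ and Theorem \ref{Sembedding} the right-hand side is $\leq\xi C\,t_{n}^{4}+C_{\xi}C\,t_{n}^{q}$, which together with $A_{n}\to\|w\|_{0}^{2}>0$ rules out $t_{n}\to0$; dividing the identity by $t_{n}^{4}$ and using \eqref{cc} on the left and, on the right, the fact (forced by $(f_{3})$--$(f_{4})$) that $f(\tau)/\tau^{3}\to+\infty$ as $\tau\to+\infty$ together with Fatou's lemma, one rules out $t_{n}\to+\infty$ as well. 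Hence, up to a subsequence, $t_{n}\to t_{0}\in(0,+\infty)$, and passing to the limit in the displayed identity (using the Lebesgue convergence of $\hat\Psi_{n}$ and the continuity of $M$ and $f$) gives $M(t_{0}^{2}\|w\|_{0}^{2})\,t_{0}^{2}\|w\|_{0}^{2}=\int_{\R^{3}}f(t_{0}w)\,t_{0}w\,dz$, i.e. $t_{0}w\in\N_{0}$; since $w\in\N_{0}$, the uniqueness part of Lemma \ref{lem3.1}-$(i)$ forces $t_{0}=1$. Therefore
\[
\mathcal{J}_{\e_{n}}(\Phi_{\e_{n}}(y_{n}))=\tfrac12\widehat{M}\bigl(t_{n}^{2}A_{n}\bigr)-\int_{\R^{3}}F(t_{n}\hat\Psi_{n})\,dz\ \longrightarrow\ \tfrac12\widehat{M}(\|w\|_{0}^{2})-\int_{\R^{3}}F(w)\,dz=\mathcal{J}_{0}(w)=c_{0},
\]
contradicting $|\mathcal{J}_{\e_{n}}(\Phi_{\e_{n}}(y_{n}))-c_{0}|\geq\delta_{0}$ and proving the lemma. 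The main obstacle is the convergence of the nonlocal seminorm $[\hat\Psi_{n}]\to[w]$ — the cut-off $\eta(\e_{n}|\cdot|)$ interacts with $(-\Delta)^{s}$ far less transparently than with a local gradient — which is exactly what Lemma \ref{pp} is designed to handle; a secondary difficulty is the uniform (hence $y$-independent) control of the projection numbers $t_{n}$, obtained from the Nehari identity and the growth assumptions on $M$ and $f$.
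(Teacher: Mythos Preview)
Your proof is correct and follows essentially the same route as the paper: contradiction, the change of variable $z=x-y_{n}/\e_{n}$, Lemma \ref{pp} for the convergence $[\hat\Psi_{n}]\to[w]$ (which the paper records as \eqref{nio3}), control of $t_{n}$ via the Nehari identity, and dominated convergence to reach $\mathcal{J}_{0}(w)=c_{0}$. The only cosmetic differences are that the paper rules out $t_{n}\to\infty$ by evaluating $f(t_{n}w)/(t_{n}w)^{3}$ at a point $\hat z$ where $w$ attains its minimum on a small ball, whereas you use Fatou's lemma; and the paper obtains $t_{0}=1$ by a direct computation from \eqref{buondi} using $(M_{3})$ and $(f_{4})$, whereas you invoke the uniqueness statement in Lemma \ref{lem3.1}-$(i)$ --- your shortcut is legitimate and slightly cleaner.
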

\begin{proof}
Assume by contradiction that there exists $\delta_{0}>0$, $\{y_{n}\}_{n\in \mathbb{N}}\subset \Lambda$ and $\e_{n}\rightarrow 0$ such that 
\begin{equation}\label{puac}
|\mathcal{J}_{\e_{n}}(\Phi_{\e_{n}}(y_{n}))-c_{0}|\geq \delta_{0}.
\end{equation}
Let us observe that by using the change of variable $\displaystyle{z=\frac{\e_{n}x-y_{n}}{\e_{n}}}$, if $z\in B_{\frac{\delta}{\e_{n}}}(0)\subset \Lambda_{\delta}\subset \Omega$, it follows that $\e_{n} z\in B_{\delta}(0)$ and $\e_{n} x+y_{n}\in B_{\delta}(y_{n})\subset \Lambda_{\delta}$. \\
Then, recalling that $G=F$ in $\Omega$ and $\eta(t)=0$ for $t\geq \delta$, we have
\begin{align}\label{HZ}
\mathcal{J}_{\e}(\Phi_{\e_{n}}(y_{n}))&=\frac{1}{2} \widehat{M}\left( t^{2}_{\e_{n}} \left(\int_{\R^{3}} |(-\Delta)^{\frac{s}{2}} (\eta(|\e_{n} z|) w(z))|^{2} \, dz+\int_{\R^{3}} V(\e_{n} z+y_{n}) (\eta(|\e_{n} z|) w(z))^{2} \, dz \right)\right) \nonumber\\
&-\int_{\R^{3}} F(t_{\e_{n}} \eta(|\e_{n} z|) w(z)) \, dz.
\end{align}
Now, we aim to show that the sequence $\{t_{\e_{n}}\}_{n\in \mathbb{N}}$ verifies $t_{\e_{n}}\rightarrow 1$ as $\e_{n}\rightarrow 0$.
From the definition of $t_{\e_{n}}$, it follows that $\langle \mathcal{J}'_{\e_{n}}(\Phi_{\e_{n}}(y_{n})),\Phi_{\e_{n}}(y_{n})\rangle=0 $, which gives
\begin{align}\label{nio}
\frac{M(t^{2}_{\e_{n}}A_{n}^{2})}{t^{2}_{\e_{n}} A_{n}^{2}}=\frac{1}{A_{n}^{4}} \int_{\R^{3}} \Bigl[ \frac{f(t_{\e_{n}} \eta(|\e_{n} z|) w(z))}{(t_{\e_{n}} \eta(|\e_{n} z|) w(z))^{3}}\Bigr] (\eta(|\e_{n} z|) w(z))^{4} \, dz
\end{align}
where 
$$
A_{n}=\int_{\R^{3}} |(-\Delta)^{\frac{s}{2}} (\eta(|\e_{n} z|) w(z))|^{2} \, dz+\int_{\R^{3}} V(\e_{n} z+y_{n}) (\eta(|\e_{n} z|) w(z))^{2} \, dz.
$$
Since $\eta=1$ in $B_{\frac{\delta}{2}}(0)\subset B_{\frac{\delta}{\e_{n}}}(0)$ for all $n$ large enough, we get from \eqref{nio}
\begin{align*}
\frac{M(t^{2}_{\e_{n}}A_{n}^{2})}{t^{2}_{\e_{n}} A_{n}^{2}}\geq \frac{1}{A_{n}^{4}} \int_{B_{\frac{\delta}{2}}(0)} \Bigl[ \frac{f(t_{\e_{n}} w(z))}{(t_{\e_{n}} w(z))^{3}}\Bigr] w^{4}(z) \, dz.
\end{align*}
From the continuity of $w$ we can find a vector $\hat{z}\in \R^{3}$ such that 
\begin{equation*}
w(\hat{z})=\min_{z\in B_{\frac{\delta}{2}}} w(z)>0,
\end{equation*} 
so, by using $(f_4)$, we deduce that
\begin{align}\label{nioo}
\frac{M(t^{2}_{\e_{n}}A_{n}^{2})}{t^{2}_{\e_{n}} A_{n}^{2}}\geq \frac{1}{A_{n}^{4}} \Bigl[ \frac{f(t_{\e_{n}} w(\hat{z}))}{(t_{\e_{n}} w(\hat{z}))^{3}}\Bigr] \int_{B_{\frac{\delta}{2}}(0)} w^{4}(z) \, dz\geq \frac{1}{A_{n}^{4}} \Bigl[ \frac{f(t_{\e_{n}} w(\hat{z}))}{(t_{\e_{n}} w(\hat{z}))^{3}}\Bigr] w^{4}(\hat{z}) |B_{\frac{\delta}{2}}(0)|.
\end{align}
Now, assume by contradiction that $t_{\e_{n}}\rightarrow \infty$. 
Let us observe that Lemma \ref{pp} yields  
\begin{align}\label{nio3}
\| \Psi_{\e_{n}, y_{n}} \|^{2}_{\e_{n}}=A_{n}^{2}\rightarrow \|w\|^{2}_{0}\in (0, \infty).
\end{align}
So, by using $t_{\e_{n}}\rightarrow \infty$, $(M_3)$ and \eqref{nio3}, we can see that
\begin{align}\label{nio1}
\lim_{n\rightarrow \infty} \frac{M(t^{2}_{\e_{n}}A_{n}^{2})}{t^{2}_{\e_{n}} A_{n}^{2}}\leq \lim_{n\rightarrow \infty} \frac{\gamma(1+ t^{2}_{\e_{n}}A_{n}^{2})}{t^{2}_{\e_{n}} A_{n}^{2}}\leq C<\infty.
\end{align}
On the other hand, the assumption $(f_3)$ implies that
\begin{align}\label{nio2}
\lim_{n\rightarrow \infty} \frac{f(t_{\e_{n}} w(\hat{z}))}{(t_{\e_{n}} w(\hat{z}))^{3}}=\infty.
\end{align}
Putting together \eqref{nioo}, \eqref{nio1} and \eqref{nio2} we have a contradiction.
Therefore $\{t_{\e_{n}}\}_{n\in \mathbb{N}}$ is bounded and, up to subsequence, we may assume that $t_{\e_{n}}\rightarrow t_{0}$ for some $t_{0}\geq 0$. 
Let us prove that $t_{0}>0$. Suppose by contradiction that $t_{0}=0$. 
Then, taking into account \eqref{nio3} and the assumptions $(M_1)$, $(f_1)$ and $(f_2)$, we can see that \eqref{nio} yields
\begin{align*}
0<m_{0}\|w\|^{2}_{0}\leq \lim_{n\rightarrow \infty} \left[C t^{2}_{\e_{n}} \int_{\R^{3}} w^{4}\, dz+C t_{\e_{n}}^{q-2} \int_{\R^{3}} w^{q} \, dz \right]=0
\end{align*}
which is impossible. Hence $t_{0}>0$.
Thus, by passing to the limit as $n\rightarrow \infty$ in \eqref{nio}, we deduce from \eqref{nio3}, the continuity of $M$ and the Dominated Convergence Theorem that
\begin{align*}
M(t_{0}^{2} \|w\|^{2}_{0})\, t_{0} \|w\|^{2}_{0}=\int_{\R^{3}} f(t_{0} w) \,w \, dx.
\end{align*}
Since $w\in \N_{0}$, we can see that 
\begin{align}\label{buondi}
\frac{M(t_{0}^{2} \|w\|^{2}_{0})}{t_{0}^{2} \|w\|^{2}_{0}}-\frac{M(\|w\|^{2}_{0})}{\|w\|^{2}_{0}}=\frac{1}{\|w\|^{4}_{0}}\int_{\R^{3}} \left[\frac{f(t_{0} w)}{(t_{0}w)^{3}}- \frac{f(w)}{w^{3}} \right] w^{4} \, dx.
\end{align}
If $t_{0}>1$, from $(M_3)$ and $(f_4)$ we can see that the left hand side of \eqref{buondi} is negative and the right hand side is positive. A similar reasoning can be done when $t_{0}<1$. Therefore $t_{0}=1$.\\
Then, taking the limit as $n\rightarrow \infty$ in \eqref{HZ} and by using $t_{\e_{n}}\rightarrow 1$,
\begin{align*}
\int_{\R^{3}} F(\eta(|\e_{n} z|) w(z))  \, dz \rightarrow \int_{\R^{3}} F(w)  \, dz 
\end{align*}
(this follows by the Dominated Convergence Theorem) and \eqref{nio3}, we obtain
$$
\lim_{n\rightarrow \infty} \mathcal{J}_{\e_{n}}(\Phi_{\e_{n}, y_{n}})=\mathcal{J}_{0}(w)=c_{0},
$$
which contradicts \eqref{puac}.
\end{proof}

\noindent
At this point, we are in the position to define the barycenter map. For any $\delta>0$, we take $\rho=\rho(\delta)>0$ such that $\Lambda_{\delta}\subset B_{\rho}$, and we consider $\varUpsilon: \R^{3}\rightarrow \R^{3}$ defined by setting
\begin{equation*}
\varUpsilon(x)=
\left\{
\begin{array}{ll}
x &\mbox{ if } |x|<\rho \\
\frac{\rho x}{|x|} &\mbox{ if } |x|\geq \rho.
\end{array}
\right.
\end{equation*}
We define the barycenter map $\beta_{\e}: \N_{\e}\rightarrow \R^{N}$ as follows
\begin{align*}
\beta_{\e}(u)=\frac{\displaystyle{\int_{\R^{3}} \varUpsilon(\e x)u^{2}(x) \,dx}}{\displaystyle{\int_{\R^{3}} u^{2}(x) \,dx}}.
\end{align*}

\begin{lem}\label{lem3.5N}
The function $\beta_{\e}$ verifies the following limit
\begin{equation*}
\lim_{\e \rightarrow 0} \beta_{\e}(\Phi_{\e}(y))=y \mbox{ uniformly in } y\in \Lambda.
\end{equation*}
\end{lem}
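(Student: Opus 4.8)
The plan is to argue by contradiction, in the same spirit as the proof of Lemma \ref{lem3.4}. Suppose the conclusion fails. Then there exist $\delta_0>0$, a sequence $\{y_n\}_{n\in\mathbb{N}}\subset\Lambda$ and $\e_n\rightarrow 0^{+}$ such that
\begin{equation*}
|\beta_{\e_n}(\Phi_{\e_n}(y_n))-y_n|\geq\delta_0\qquad\text{for all }n\in\mathbb{N}.
\end{equation*}
The first step is to make $\beta_{\e_n}(\Phi_{\e_n}(y_n))$ explicit. Using the change of variable $z=\frac{\e_n x-y_n}{\e_n}$ and observing that the common factor $t_{\e_n}^{2}$ cancels in the quotient defining $\beta_\e$, while $\Psi_{\e_n,y_n}(x)$ turns into $\eta(|\e_n z|)\,w(z)$ (since $\e_n x-y_n=\e_n z$), one obtains
\begin{equation*}
\beta_{\e_n}(\Phi_{\e_n}(y_n))-y_n=\frac{\displaystyle\int_{\R^{3}}\bigl[\varUpsilon(\e_n z+y_n)-y_n\bigr]\bigl(\eta(|\e_n z|)\,w(z)\bigr)^{2}\,dz}{\displaystyle\int_{\R^{3}}\bigl(\eta(|\e_n z|)\,w(z)\bigr)^{2}\,dz}.
\end{equation*}

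I would then let $n\rightarrow\infty$ and invoke the Dominated Convergence Theorem on numerator and denominator separately. For the denominator: for each fixed $z$ we have $\eta(|\e_n z|)\rightarrow\eta(0)=1$, hence $(\eta(|\e_n z|)w(z))^{2}\rightarrow w^{2}(z)$, and since $0\le(\eta(|\e_n z|)w(z))^{2}\le w^{2}(z)\in L^{1}(\R^{3})$ (because $w\in H^{s}(\R^{3})\subset L^{2}(\R^{3})$ and $0\le\eta\le 1$), the denominator converges to $\|w\|_{L^{2}(\R^{3})}^{2}>0$. For the numerator: since $y_n\in\Lambda\subset\Lambda_\delta\subset B_\rho$ we have $\varUpsilon(y_n)=y_n$, and because $\varUpsilon$ is $1$-Lipschitz (it is the metric projection onto $\overline{B_\rho}$),
\begin{equation*}
|\varUpsilon(\e_n z+y_n)-y_n|=|\varUpsilon(\e_n z+y_n)-\varUpsilon(y_n)|\le\e_n|z|\longrightarrow 0
\end{equation*}
for each fixed $z$, while at the same time $|\varUpsilon(\e_n z+y_n)-y_n|\le 2\rho$. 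Hence the integrand in the numerator goes to $0$ pointwise and is dominated by $2\rho\,w^{2}(z)\in L^{1}(\R^{3})$, so the numerator tends to $0$. Therefore $\beta_{\e_n}(\Phi_{\e_n}(y_n))-y_n\rightarrow 0$, contradicting the displayed inequality and completing the proof.

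This argument is essentially bookkeeping: the two points that need a little care are performing the change of variable correctly, so that the cutoff $\eta$ and the fixed profile $w$ decouple from the translation $y_n$, and exhibiting an $L^{1}$-majorant independent of $n$, for which the uniform bounds $|\varUpsilon|\le\rho$ and $0\le\eta\le 1$ are exactly what is needed. No Palais--Smale compactness enters here, so I do not expect any serious obstacle.
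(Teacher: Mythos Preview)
Your proof is correct and follows essentially the same approach as the paper: contradiction, change of variable $z=(\e_n x-y_n)/\e_n$, and Dominated Convergence on numerator and denominator. In fact you are more careful than the paper, which simply invokes DCT without spelling out the majorant; your observation that $\varUpsilon$ is the metric projection onto $\overline{B_\rho}$ (hence $1$-Lipschitz, giving pointwise convergence without extracting a subsequence of $\{y_n\}$) and the uniform bound $|\varUpsilon(\cdot)-y_n|\le 2\rho$ make the argument fully rigorous.
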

\begin{proof}
Assume by contradiction that there exists $\delta_{0}>0$, $\{y_{n}\}_{n\in \mathbb{N}}\subset \Lambda$ and $\e_{n}\rightarrow 0$ such that 
\begin{equation}\label{4.4}
|\beta_{\e_{n}}(\Phi_{\e_{n}}(y_{n}))-y_{n}|\geq \delta_{0}.
\end{equation}
From the definitions of $\Phi_{\e_{n}}(y_{n})$, $\beta_{\e_{n}}$, $\eta$ and by using the change of variable $\displaystyle{z= \frac{\e_{n} x-y_{n}}{\e_{n}}}$
we can see that 
$$
\beta_{\e_{n}}(\Phi_{\e_{n}}(y_{n}))=y_{n}+\frac{\displaystyle{\int_{\R^{3}}[\Upsilon(\e_{n}z+y_{n})-y_{n}] |\eta(|\e_{n}z|)|^{2} |w(z)|^{2} \, dz}}{\displaystyle{\int_{\R^{3}} |\eta(|\e_{n}z|)|^{2} |w(z)|^{2}\, dz}}.
$$
Since $\{y_{n}\}_{n\in \mathbb{N}}\subset \Lambda\subset B_{\rho}(0)$ and by applying the Dominated Convergence Theorem, we can deduce
$$
|\beta_{\e_{n}}(\Phi_{\e_{n}}(y_{n}))-y_{n}|=o_{n}(1)
$$
which is in contrast with (\ref{4.4}).
\end{proof}

\noindent
At this point, we introduce a subset $\widetilde{\N}_{\e}$ of $\N_{\e}$ by taking a function $h_{1}:\R^{+}\rightarrow \R^{+}$ such that $h_{1}(\e)\rightarrow 0$ as $\e \rightarrow 0$, and setting
$$
\widetilde{\N}_{\e}=\left \{u\in \N_{\e}: \mathcal{J}_{\e}(u)\leq c_{0}+h_{1}(\e)\right\}.
$$
Fixed $y\in \Lambda$, from Lemma \ref{lem3.4} follows that $h_{1}(\e)=|\mathcal{J}_{\e}(\Phi_{\e}(y))-c_{0}|\rightarrow 0$ as $\e \rightarrow 0$. Therefore $\Phi_{\e}(y)\in \widetilde{\N}_{\e}$, and $\widetilde{\N}_{\e}\neq \emptyset$ for any $\e>0$. Moreover, we have the following lemma.
\begin{lem}\label{lem3.5}
$$
\lim_{\e \rightarrow 0} \sup_{u\in \widetilde{\mathcal{N}}_{\e}} {\rm dist}(\beta_{\e}(u), \Lambda_{\delta})=0.
$$
\end{lem}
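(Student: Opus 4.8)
The plan is to argue by contradiction along the classical Ljusternik--Schnirelmann scheme. Suppose the statement fails; then there are $\delta_{0}>0$, a sequence $\e_{n}\rightarrow 0^{+}$ and functions $u_{n}\in \widetilde{\N}_{\e_{n}}$ with
\begin{equation*}
{\rm dist}(\beta_{\e_{n}}(u_{n}), \Lambda_{\delta})\geq \delta_{0}\quad \text{for all } n\in \mathbb{N}.
\end{equation*}
The first step I would carry out is to show that $\mathcal{J}_{\e_{n}}(u_{n})\rightarrow c_{0}$. The upper bound $\mathcal{J}_{\e_{n}}(u_{n})\leq c_{0}+h_{1}(\e_{n})\rightarrow c_{0}$ is immediate from the definition of $\widetilde{\N}_{\e_{n}}$. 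For the lower bound, recall that $u_{n}\in \N_{\e_{n}}\subset \mathcal{H}_{\e_{n}}^{+}$ by Lemma \ref{lem2.3}-$(iii)$, so $|\supp(u_{n}^{+})|>0$ and hence $u_{n}\in \mathcal{H}_{0}^{+}$; by Lemma \ref{lem3.1}-$(i)$ there is $t_{n}>0$ such that $t_{n}u_{n}\in \N_{0}$. Using that $\widehat{M}$ is nondecreasing, that $\|u_{n}\|_{0}\leq \|u_{n}\|_{\e_{n}}$ (because $V_{0}\leq V(\e_{n}x)$), that $G(\e_{n}x,t)\leq F(t)$ for every $t\in\R$, and that $t\mapsto \mathcal{J}_{\e_{n}}(tu_{n})$ attains its maximum at $t=1$ (Lemma \ref{lem2.3}-$(i)$), one obtains
\begin{equation*}
c_{0}\leq \mathcal{J}_{0}(t_{n}u_{n})\leq \mathcal{J}_{\e_{n}}(t_{n}u_{n})\leq \mathcal{J}_{\e_{n}}(u_{n}),
\end{equation*}
which forces $\mathcal{J}_{\e_{n}}(u_{n})\rightarrow c_{0}$.

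Next, since $\{u_{n}\}_{n\in\mathbb{N}}\subset \N_{\e_{n}}$ and $\mathcal{J}_{\e_{n}}(u_{n})\rightarrow c_{0}$, I would invoke Lemma \ref{lem3.1N}: up to a subsequence there is $\{\tilde{y}_{n}\}_{n\in\mathbb{N}}\subset \R^{3}$ such that $\tilde{u}_{n}:=u_{n}(\cdot+\tilde{y}_{n})\rightarrow \tilde{u}$ in $H^{s}(\R^{3})$ with $\tilde{u}\neq 0$, and $y_{n}:=\e_{n}\tilde{y}_{n}\rightarrow y_{0}\in \Lambda$. Performing the change of variable $x=z+\tilde{y}_{n}$ in the quotient defining $\beta_{\e_{n}}$ yields
\begin{equation*}
\beta_{\e_{n}}(u_{n})=y_{n}+\frac{\displaystyle\int_{\R^{3}}\bigl[\varUpsilon(\e_{n}z+y_{n})-y_{n}\bigr]\,\tilde{u}_{n}^{2}(z)\,dz}{\displaystyle\int_{\R^{3}}\tilde{u}_{n}^{2}(z)\,dz}.
\end{equation*}

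Finally I would pass to the limit as $n\rightarrow\infty$. The denominator converges to $\int_{\R^{3}}\tilde{u}^{2}>0$ by the strong $L^{2}$ convergence of $\tilde{u}_{n}$. For the numerator, $\varUpsilon$ is continuous and bounded by $\rho$, so $\varUpsilon(\e_{n}z+y_{n})\rightarrow \varUpsilon(y_{0})=y_{0}$ pointwise (recall $y_{0}\in\Lambda\subset B_{\rho}$); writing $\varUpsilon(\e_{n}z+y_{n})\tilde{u}_{n}^{2}-y_{0}\tilde{u}^{2}=\varUpsilon(\e_{n}z+y_{n})(\tilde{u}_{n}^{2}-\tilde{u}^{2})+(\varUpsilon(\e_{n}z+y_{n})-y_{0})\tilde{u}^{2}$ and applying the Dominated Convergence Theorem (the second term being dominated by $2\rho\,\tilde{u}^{2}\in L^{1}$) shows that the numerator tends to $0$. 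Hence $\beta_{\e_{n}}(u_{n})\rightarrow y_{0}\in\Lambda\subset\Lambda_{\delta}$, so ${\rm dist}(\beta_{\e_{n}}(u_{n}),\Lambda_{\delta})\rightarrow 0$, contradicting the choice of $u_{n}$. The only genuinely delicate point is securing $\mathcal{J}_{\e_{n}}(u_{n})\rightarrow c_{0}$, which is needed to apply Lemma \ref{lem3.1N}; everything else is a routine limit computation based on the strong convergence of the translated sequence and the boundedness of $\varUpsilon$.
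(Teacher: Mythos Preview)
Your proof is correct and follows essentially the same route as the paper: you reduce to a sequence $u_{n}\in\widetilde{\N}_{\e_{n}}$, squeeze $\mathcal{J}_{\e_{n}}(u_{n})\to c_{0}$ via $c_{0}\leq \mathcal{J}_{0}(t_{n}u_{n})\leq \mathcal{J}_{\e_{n}}(t_{n}u_{n})\leq \mathcal{J}_{\e_{n}}(u_{n})\leq c_{0}+h_{1}(\e_{n})$, invoke Lemma~\ref{lem3.1N}, and then compute $\beta_{\e_{n}}(u_{n})=y_{n}+o_{n}(1)$ after the translation change of variables. The only cosmetic difference is that the paper argues directly by picking near-maximizers of the supremum rather than by contradiction, and compresses your lower-bound chain into the single line $c_{0}\leq c_{\e_{n}}\leq \mathcal{J}_{\e_{n}}(u_{n})$.
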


\begin{proof}
Let $\e_{n}\rightarrow 0$ as $n\rightarrow \infty$. For any $n\in \mathbb{N}$, there exists $u_{n}\in \widetilde{\N}_{\e_{n}}$ such that
$$
\sup_{u\in \widetilde{\N}_{\e_{n}}} \inf_{y\in \Lambda_{\delta}}|\beta_{\e_{n}}(u)-y|=\inf_{y\in \Lambda_{\delta}}|\beta_{\e_{n}}(u_{n})-y|+o_{n}(1).
$$
For this reason, it is enough to prove that there exists $\{y_{n}\}_{n\in \mathbb{N}}\subset \Lambda_{\delta}$ such that 
\begin{equation}\label{3.13}
\lim_{n\rightarrow \infty} |\beta_{\e_{n}}(u_{n})-y_{n}|=0.
\end{equation}
Since $\mathcal{J}_{0}(tu_{n})\leq \mathcal{J}_{\e_{n}}(tu_{n})$ for all $t\geq 0$ and $\{u_{n}\}_{n\in \mathbb{N}}\subset  \widetilde{\N}_{\e_{n}}\subset \N_{\e_{n}}$, we deduce that
$$
c_{0}\leq c_{\e_{n}}\leq \mathcal{J}_{\e_{n}}(u_{n})\leq c_{0}+h_{1}(\e_{n}),
$$
and this implies that $\mathcal{J}_{\e_{n}}(u_{n})\rightarrow c_{0}$. By using Lemma \ref{lem3.1N}, there exists $\{\tilde{y}_{n}\}_{n\in \mathbb{N}}\subset \R^{3}$ such that $y_{n}=\e_{n}\tilde{y}_{n}\in \Lambda_{\delta}$ for $n$ sufficiently large. By setting $\tilde{u}_{n}(x)=u_{n}(\cdot+\tilde{y}_{n})$, we can see that
$$
\beta_{\e_{n}}(u_{n})=y_{n}+\frac{\displaystyle{\int_{\R^{3}}[\Upsilon(\e_{n}x+y_{n})-y_{n}] \tilde{u}_{n}^{2} \, dx}}{\displaystyle{\int_{\R^{3}} \tilde{u}_{n}^{2} \, dx}}=y_{n}+o_{n}(1),
$$
because $\tilde{u}_{n}\rightarrow u$ in $H^{s}(\R^{3})$ and $\e_{n} x+y_{n}\rightarrow y \in\Lambda$. As a consequence, the sequence $\{\tilde{y}_{n}\}_{n\in \mathbb{N}}$ verifies (\ref{3.13}).
\end{proof}

\noindent
Before proving our multiplicity result for the modified problem \eqref{Pea}, we recall the following useful abstract result whose proof can be found in \cite{BC}.
\begin{lem}\label{BC}
Let $I$, $I_{1}$ and $I_{2}$ be closed sets with $I_{1}\subset I_{2}$, and let $\pi: I\rightarrow I_{2}$ and $\psi: I_{1}\rightarrow I$ be two continuous maps  such that $\pi \circ \psi$ is homotopically equivalent to the embedding $j: I_{1}\rightarrow I_{2}$. Then $cat_{I}(I)\geq cat_{I_{2}}(I_{1})$.
\end{lem}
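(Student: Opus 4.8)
The plan is to argue straight from the definition of Ljusternik--Schnirelmann category. Recall that $cat_{I}(I)$ is the least integer $n$ for which there is a covering $I=F_{1}\cup\dots\cup F_{n}$ by sets $F_{j}$ that are closed in $I$ and contractible in $I$; so I would start by fixing such an optimal covering, with $n=cat_{I}(I)$. The natural candidate for a covering of $I_{1}$ is obtained by pulling back along $\psi$: set $A_{j}:=\psi^{-1}(F_{j})\subset I_{1}$ for $j=1,\dots,n$. Since $\psi$ is continuous and each $F_{j}$ is closed, every $A_{j}$ is closed in $I_{1}$, hence closed in $I_{2}$ (as $I_{1}$ is a closed subset of $I_{2}$); moreover $\bigcup_{j=1}^{n}A_{j}=\psi^{-1}\bigl(\bigcup_{j=1}^{n}F_{j}\bigr)=\psi^{-1}(I)=I_{1}$. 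Thus $\{A_{j}\}_{j=1}^{n}$ is an admissible closed covering of $I_{1}$ with $n$ members, and the whole point reduces to showing that each $A_{j}$ is contractible in $I_{2}$.

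For this I would concatenate three homotopies. First, since $\psi(A_{j})\subset F_{j}$ and $F_{j}$ is contractible in $I$, there is a continuous $h\colon[0,1]\times F_{j}\to I$ with $h(0,\cdot)$ the inclusion $F_{j}\hookrightarrow I$ and $h(1,\cdot)\equiv p_{j}$ for some $p_{j}\in I$; then $(t,x)\mapsto h(t,\psi(x))$ is a homotopy in $I$ from $\psi|_{A_{j}}$ to the constant map $p_{j}$. Composing with $\pi\colon I\to I_{2}$ gives a homotopy in $I_{2}$ from $\pi\circ\psi|_{A_{j}}$ to the constant map $\pi(p_{j})$. On the other hand, the hypothesis that $\pi\circ\psi$ is homotopic in $I_{2}$ to the embedding $j\colon I_{1}\to I_{2}$, restricted to $A_{j}$, says precisely that $\pi\circ\psi|_{A_{j}}$ is homotopic in $I_{2}$ to the inclusion $A_{j}\hookrightarrow I_{2}$. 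Splicing these two homotopies shows that the inclusion $A_{j}\hookrightarrow I_{2}$ is null-homotopic, i.e. $A_{j}$ is contractible in $I_{2}$, as needed.

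Putting the two steps together, $I_{1}$ is covered by $n$ closed sets each contractible in $I_{2}$, hence $cat_{I_{2}}(I_{1})\leq n=cat_{I}(I)$, which is the claim. I do not expect a real obstacle: the lemma is purely topological (it is the standard abstract lemma of Benci--Cerami \cite{BC}), and the only points deserving a little care are checking that $A_{j}=\psi^{-1}(F_{j})$ is genuinely an admissible piece of a covering in the sense of the category definition (closedness being preserved by continuous preimages) and the bookkeeping of concatenating the restriction of the global homotopy $\pi\circ\psi\simeq j$ with the contraction of $F_{j}$ inside $I$. No compactness, smoothness, or dimension assumptions enter.
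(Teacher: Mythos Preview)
Your argument is correct and is exactly the standard proof of this abstract category lemma. Note, however, that the paper does not actually supply a proof of this statement: it simply records the lemma and refers the reader to \cite{BC} (Benci--Cerami), so there is no in-paper argument to compare against. What you have written is a self-contained version of the proof one finds in that reference; the only minor point worth adding explicitly is the trivial case $cat_{I}(I)=\infty$, in which the inequality holds vacuously.
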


\begin{thm}\label{thm3.2}
Assume that $(M_1)$-$(M_3)$, $(V_1)$-$(V_2)$ and $(f_1)$-$(f_4)$ hold true. 
Then, given $\delta>0$ there exists $\bar{\e}_\delta>0$ such that, for any $\e \in (0, \bar{\e}_\delta)$, problem $\eqref{Pea}$ has at least $cat_{\Lambda_{\delta}}(\Lambda)$ positive solutions.   
\end{thm}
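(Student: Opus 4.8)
The plan is to deduce the multiplicity statement from the Ljusternik--Schnirelmann category theory applied to $\psi_\e$ on $\mathbb{S}_\e^+$, combined with the maps $\Phi_\e$ and $\beta_\e$ built above. Fix $\delta>0$ with $\Lambda_\delta\subset\Omega$ and set $h_1(\e):=\sup_{y\in\Lambda}|\mathcal{J}_\e(\Phi_\e(y))-c_0|$, which tends to $0$ as $\e\to0$ by Lemma \ref{lem3.4}; hence $\Phi_\e(\Lambda)\subset\widetilde{\N}_\e$, and the (continuous) map $\Phi_\e$ may be viewed as $\Lambda\to\widetilde{\N}_\e$. By Lemma \ref{lem2.3}-$(iii)$ and Proposition \ref{prop2.1}-$(d)$, the homeomorphism $m_\e:\mathbb{S}_\e^+\to\N_\e$ maps the sublevel set $\widetilde{\mathbb{S}}_\e^+:=\{u\in\mathbb{S}_\e^+:\psi_\e(u)\le c_0+h_1(\e)\}=m_\e^{-1}(\widetilde{\N}_\e)$ onto $\widetilde{\N}_\e$, carries distinct critical points of $\psi_\e$ into distinct nontrivial critical points of $\mathcal{J}_\e$ on $\mathcal{H}_\e$ at the same level, and preserves category, so $cat_{\widetilde{\mathbb{S}}_\e^+}(\widetilde{\mathbb{S}}_\e^+)=cat_{\widetilde{\N}_\e}(\widetilde{\N}_\e)$. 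Since $\psi_\e\in C^1(\mathbb{S}_\e^+,\R)$ is bounded below by $c_0>0$, satisfies $(PS)_d$ for every $d$ by Corollary \ref{cor2.1}, and $\mathcal{J}_\e(m_\e(u_n))\to\infty$ whenever ${\rm dist}(u_n,\partial\mathbb{S}_\e^+)\to0$ by Lemma \ref{lem2.3}-$(iv)$ (so Palais--Smale sequences inside $\widetilde{\mathbb{S}}_\e^+$ stay away from the incomplete boundary), the standard deformation argument on $C^{1,1}$-manifolds yields that $\psi_\e$ has at least $cat_{\widetilde{\mathbb{S}}_\e^+}(\widetilde{\mathbb{S}}_\e^+)$ critical points in $\widetilde{\mathbb{S}}_\e^+$.

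It then remains to bound $cat_{\widetilde{\N}_\e}(\widetilde{\N}_\e)$ from below by $cat_{\Lambda_\delta}(\Lambda)$. By Lemma \ref{lem3.5} there is $\bar\e_\delta>0$ with $\beta_\e(\widetilde{\N}_\e)\subset\Lambda_\delta$ for all $\e\in(0,\bar\e_\delta)$, and, shrinking $\bar\e_\delta$ if needed, Lemma \ref{lem3.5N} gives $|\beta_\e(\Phi_\e(y))-y|<\delta$ uniformly in $y\in\Lambda$. Consequently the straight-line homotopy $(s,y)\mapsto(1-s)y+s\,\beta_\e(\Phi_\e(y))$ stays in $\Lambda_\delta$, since ${\rm dist}((1-s)y+s\beta_\e(\Phi_\e(y)),\Lambda)\le s\,|\beta_\e(\Phi_\e(y))-y|<\delta$, so $\beta_\e\circ\Phi_\e:\Lambda\to\Lambda_\delta$ is homotopic to the inclusion $j:\Lambda\hookrightarrow\Lambda_\delta$. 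Applying Lemma \ref{BC} with $I=\widetilde{\N}_\e$, $I_1=\Lambda$, $I_2=\Lambda_\delta$, $\psi=\Phi_\e|_\Lambda$, $\pi=\beta_\e|_{\widetilde{\N}_\e}$ gives $cat_{\widetilde{\N}_\e}(\widetilde{\N}_\e)\ge cat_{\Lambda_\delta}(\Lambda)$. Combining with the previous step, $\mathcal{J}_\e$ has at least $cat_{\Lambda_\delta}(\Lambda)$ distinct nontrivial critical points in $\widetilde{\N}_\e\subset\N_\e$, i.e.\ \eqref{Pea} admits at least $cat_{\Lambda_\delta}(\Lambda)$ nontrivial weak solutions for $\e\in(0,\bar\e_\delta)$. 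For each such solution $u$, testing $\mathcal{J}_\e'(u)=0$ with $u^-$ and using $(x-y)(x^--y^-)\le-|x^--y^-|^2$, $g(\e x,t)=0$ for $t\le0$ and $(M_1)$ forces $\|u^-\|_\e^2=0$, so $u\ge0$; since $u\not\equiv0$, a regularity and Harnack argument exactly as in the proof of Theorem \ref{thm3.1} gives $u>0$ in $\R^3$.

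I expect the main obstacle to be the abstract multiplicity step on the \emph{incomplete} manifold $\mathbb{S}_\e^+$: one has to invoke the version of the Ljusternik--Schnirelmann theorem tailored to $C^{1,1}$-manifolds with boundary and verify, through Lemma \ref{lem2.3}-$(iv)$, that the pseudo-gradient flow used in the deformation lemma remains in a region where $\psi_\e$ stays bounded, so that the category count on $\widetilde{\mathbb{S}}_\e^+$ is legitimate. The secondary point requiring care is the uniform choice of $\bar\e_\delta$ making Lemmas \ref{lem3.4}, \ref{lem3.5} and \ref{lem3.5N} simultaneously effective, so that $\beta_\e\circ\Phi_\e$ is genuinely $\Lambda_\delta$-valued and homotopic to the inclusion; everything else (boundedness and compactness of Palais--Smale sequences, the Nehari reformulation) is already in place from Sections 3 and 4.
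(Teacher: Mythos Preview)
Your proof is correct and follows essentially the same strategy as the paper: build the diagram $\Lambda\xrightarrow{\Phi_\e}\widetilde{\N}_\e\xrightarrow{\beta_\e}\Lambda_\delta$, show $\beta_\e\circ\Phi_\e$ is homotopic to the inclusion via the straight-line homotopy, invoke Lemma~\ref{BC} to bound the category from below, and then apply Ljusternik--Schnirelmann theory to $\psi_\e$ on the sublevel set of $\mathbb{S}_\e^+$ (using Corollary~\ref{cor2.1} for Palais--Smale and Lemma~\ref{lem2.3}-$(iv)$ to handle the incomplete boundary). The only cosmetic difference is that the paper transfers everything to $\mathbb{S}_\e^+$ first via $\alpha_\e=m_\e^{-1}\circ\Phi_\e$ and applies Lemma~\ref{BC} with $I=\alpha_\e(\Lambda)$, whereas you stay on $\widetilde{\N}_\e$; since $m_\e$ is a homeomorphism the two routes are equivalent, and you are additionally explicit about the positivity step, which the paper leaves implicit in this proof.
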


\begin{proof}
For any $\e>0$, we consider the map $\alpha_{\e} : \Lambda \rightarrow \mathbb{S}_{\e}^{+}$ defined as $\alpha_{\e}(y)= m_{\e}^{-1}(\Phi_{\e}(y))$. \\
By using Lemma \ref{lem3.4}, we can see that
\begin{equation}\label{FJS}
\lim_{\e \rightarrow 0} \psi_{\e}(\alpha_{\e}(y)) = \lim_{\e \rightarrow 0} \mathcal{J}_{\e}(\Phi_{\e}(y))= c_{0} \mbox{ uniformly in } y\in \Lambda. 
\end{equation}  
Set
$$
\widetilde{\mathcal{S}}^{+}_{\e}=\{ w\in \mathbb{S}_{\e}^{+} : \psi_{\e}(w) \leq c_{0} + h_{1}(\e)\}, 
$$
where $h_{1}(\e)\rightarrow 0$ as $\e\rightarrow 0^{+}$. It follows from \eqref{FJS} that $h_{1}(\e)=|\psi_{\e}(\alpha_{\e}(y))-c_{0}|\rightarrow 0$ as $\e\rightarrow 0^{+}$ uniformly in $y\in \Lambda$, so 
there exists $\bar{\e}>0$ such that $\psi_{\e}(\alpha_{\e}(y))\in \widetilde{\mathcal{S}}^{+}_{\e}$  and $\widetilde{\mathcal{S}}^{+}_{\e}\neq \emptyset$ for all $\e \in (0, \bar{\e})$.
From Lemma \ref{lem3.4}, Lemma \ref{lem2.3}-$(iii)$, Lemma \ref{lem3.5} and Lemma \ref{lem3.5N}, we can find $\bar{\e}= \bar{\e}_{\delta}>0$ such that the following diagram
\begin{equation*}
\Lambda\stackrel{\Phi_{\e}}{\rightarrow} \Phi_{\e}(\Lambda) \stackrel{m_{\e}^{-1}}{\rightarrow} \alpha_{\e}(\Lambda)\stackrel{m_{\e}}{\rightarrow} \Phi_{\e}(\Lambda) \stackrel{\beta_{\e}}{\rightarrow} \Lambda_{\delta}
\end{equation*}    
is well defined for any $\e \in (0, \bar{\e})$. \\
Thanks to Lemma \ref{lem3.5N}, and decreasing $\bar{\e}$ if necessary, we can see that $\beta_{\e}(\Phi_{\e}(y))= y+ \theta(\e, y)$ for all $y\in \Lambda$, for some function $\theta(\e, y)$ verifying $|\theta(\e, y)|<\frac{\delta}{2}$ uniformly in $y\in \Lambda$ and for all $\e \in (0, \bar{\e})$. Then, we can see that $H(t, y)= y+ (1-t)\theta(\e, y)$ with $(t, y)\in [0,1]\times \Lambda$ is a homotopy between $\beta_{\e} \circ \Phi_{\e} = (\beta_{\e} \circ m_{\e}) \circ (m_{\e}^{-1}\circ \Phi_{\e})$ and the inclusion map $id: \Lambda \rightarrow \Lambda_{\delta}$. This fact together with Lemma \ref{BC} implies that 
\begin{equation}\label{cat}
cat_{\alpha_{\e}(\Lambda)} \alpha_{\e}(\Lambda)\geq cat_{\Lambda_{\delta}}(\Lambda).
\end{equation}
Therefore, by using Corollary \ref{cor2.1} and Corollary $28$ in \cite{SW}, with $c= c_{\e}\leq c_{0}+h_{1}(\e) =d$ and $K= \alpha_{\e}(\Lambda)$, we can see that $\Psi_{\e}$ has at least $cat_{\alpha_{\e}(\Lambda)} \alpha_{\e}(\Lambda)$ critical points on $\widetilde{\mathcal{S}}^{+}_{\e}$.
Taking into account Proposition \ref{prop2.1}-$(d)$ and \eqref{cat}, we can infer that $\mathcal{J}_{\e}$ admits at least $cat_{\Lambda_{\delta}}(\Lambda)$ critical points in $\widetilde{\mathcal{N}}_{\e}$.       
\end{proof}

\section{proof of theorem \ref{thm1}}

\noindent
In this last section, we provide the proof of Theorem \ref{thm1}. Firstly, we establish the following useful $L^{\infty}$-estimate for the solutions of the modified problem \eqref{Pea}. The proof is obtained by adapting in nonlocal setting the Moser iteration technique \cite{Moser}.
\begin{lem}\label{Moser}
Let $\e_{n}\rightarrow 0$ and $u_{n}\in \widetilde{\N}_{\e_{n}}$ be a solution to \eqref{Pea}. Then, up to a subsequence, $\tilde{u}_{n}=u_{n}(\cdot+\tilde{y}_{n})\in L^{\infty}(\R^{N})$, and there exists $C>0$ such that 
\begin{equation*}
\|\tilde{u}_{n}\|_{L^{\infty}(\R^{3})}\leq C \mbox{ for all } n\in \mathbb{N}.
\end{equation*}
\end{lem}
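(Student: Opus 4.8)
The plan is to run a Moser-type iteration, uniform in $n$, for the translated solutions. First I would pass to the subsequence provided by Lemma \ref{lem3.1N}, along which $\tilde u_n(x)=u_n(x+\tilde y_n)$ converges strongly in $H^s(\R^3)$ to some $\tilde u$ and $y_n:=\e_n\tilde y_n\to y_0\in\Lambda$. Performing the change of variables $x\mapsto x+\tilde y_n$ in \eqref{Pea}, one checks that $\tilde u_n\ge 0$ solves
\begin{equation*}
M(\|u_n\|_{\e_n}^2)\bigl[(-\Delta)^s\tilde u_n+V(\e_n x+y_n)\tilde u_n\bigr]=g(\e_n x+y_n,\tilde u_n)\quad\text{in }\R^3.
\end{equation*}
Since $\{u_n\}\subset\widetilde{\N}_{\e_n}$ is bounded in $\mathcal H_{\e_n}$ (as noted in the proof of Lemma \ref{lem3.1N}) and $M$ is increasing with $M\ge m_0$ by $(M_1)$--$(M_2)$, the coefficient $K_n:=M(\|u_n\|_{\e_n}^2)$ stays in a fixed interval $[m_0,C_0]$. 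Dividing by $K_n$, using $g(x,t)\le f(t)$ with $(f_1)$--$(f_2)$, $4<q<\tfrac{6}{3-2s}=:2^*_s$ and $V\ge V_0>0$, one reduces to a weak solution of $(-\Delta)^s\tilde u_n+V(\e_n x+y_n)\tilde u_n=h_n$ with $0\le h_n\le C(\tilde u_n+\tilde u_n^{2^*_s-1})$, the constant $C$ being independent of $n$.

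Next I would set up the iteration. For $\beta\ge1$ and $T>0$ put $\tilde u_{n,T}=\min\{\tilde u_n,T\}$ and $z_{n,T}=\tilde u_n\,\tilde u_{n,T}^{\beta-1}\in H^s(\R^3)$, and test the equation with $\varphi_{n,T}=\tilde u_n\,\tilde u_{n,T}^{2(\beta-1)}$. By the elementary convexity inequality for the fractional Laplacian of the type used in \cite{FQT},
\begin{equation*}
\iint_{\R^6}\frac{(\tilde u_n(x)-\tilde u_n(y))(\varphi_{n,T}(x)-\varphi_{n,T}(y))}{|x-y|^{3+2s}}\,dxdy\ \ge\ \frac1\beta\,[z_{n,T}]^2,
\end{equation*}
while the potential contribution $\int_{\R^3}V(\e_n x+y_n)\tilde u_n\varphi_{n,T}\,dx$ is nonnegative. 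Discarding the latter, using $K_n\ge m_0$ and the Sobolev inequality of Theorem \ref{Sembedding}, we arrive at
\begin{equation*}
\|z_{n,T}\|_{L^{2^*_s}(\R^3)}^2\ \le\ C\beta\Bigl(\|z_{n,T}\|_{L^2(\R^3)}^2+\int_{\R^3}\tilde u_n^{2^*_s-2}\,z_{n,T}^2\,dx\Bigr),
\end{equation*}
with $C$ independent of $n$, $T$ and $\beta$ (here we used $(\tilde u_n+\tilde u_n^{2^*_s-1})\,\tilde u_n\,\tilde u_{n,T}^{2(\beta-1)}=z_{n,T}^2+\tilde u_n^{2^*_s-2}z_{n,T}^2$).

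The crucial point — and the step I expect to be the main obstacle — is to absorb the critical term $\int\tilde u_n^{2^*_s-2}z_{n,T}^2\,dx$ \emph{uniformly in $n$}. Splitting over $\{\tilde u_n\le R\}$ and $\{\tilde u_n>R\}$ and applying Hölder's inequality with conjugate exponents $\tfrac{2^*_s}{2^*_s-2}$ and $\tfrac{2^*_s}{2}$ on the second set gives
\begin{equation*}
\int_{\R^3}\tilde u_n^{2^*_s-2}z_{n,T}^2\,dx\ \le\ R^{2^*_s-2}\,\|z_{n,T}\|_{L^2}^2+\Bigl(\int_{\{\tilde u_n>R\}}\tilde u_n^{2^*_s}\,dx\Bigr)^{\frac{2^*_s-2}{2^*_s}}\|z_{n,T}\|_{L^{2^*_s}}^2.
\end{equation*}
Since $\tilde u_n\to\tilde u$ in $L^{2^*_s}(\R^3)$, the family $\{\tilde u_n^{2^*_s}\}$ is uniformly integrable, hence $\sup_n\int_{\{\tilde u_n>R\}}\tilde u_n^{2^*_s}\,dx\to0$ as $R\to\infty$; for each $\beta$ we can fix $R=R_\beta$ making the corresponding coefficient $\le\tfrac12$ and absorb that term into the left-hand side. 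Letting $T\to\infty$ and invoking Fatou's Lemma yields $\|\tilde u_n\|_{L^{\beta 2^*_s}}\le C_\beta^{1/(2\beta)}\|\tilde u_n\|_{L^{2\beta}}$ whenever $\tilde u_n\in L^{2\beta}$. Starting from $\tilde u_n\in L^{2^*_s}$ (uniformly, by the $H^s$-bound and Theorem \ref{Sembedding}) and iterating with $\beta_k=(2^*_s/2)^k$, the usual Moser bookkeeping — controlling the growth of the constants $C_{\beta_k}$, which is harmless since $\beta_k$ grows geometrically — produces $\|\tilde u_n\|_{L^\infty(\R^3)}\le C$ with $C$ independent of $n$. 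Besides this constant-tracking, the only genuinely nonlocal difficulty is justifying the above convexity inequality for the truncated test function $\varphi_{n,T}$; the non-constant Kirchhoff term enters only through the harmless two-sided bound $m_0\le K_n\le C_0$.
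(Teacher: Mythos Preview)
Your overall strategy is sound and close to the paper's, but there are two points worth highlighting, one a simplification you missed and one a genuine gap.

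\textbf{The paper's route is simpler because the nonlinearity is subcritical.} You bound the right-hand side by $C(\tilde u_n+\tilde u_n^{2^*_s-1})$, which is correct but throws away information: by $(g_1)$--$(g_2)$ (or $(f_1)$--$(f_2)$) one has $g(\cdot,t)t\le C\,t^{q}$ with $q<2^*_s$. The paper exploits this and writes, with $w_{n,L}=\tilde u_n\tilde u_{n,L}^{\beta-1}$,
\[
\|w_{n,L}\|_{L^{2^*_s}}^{2}\le C\beta^{2}\int_{\R^{3}}\tilde u_n^{\,q-2}\,w_{n,L}^{2}\,dx
\le C\beta^{2}\,\|\tilde u_n\|_{L^{2^*_s}}^{q-2}\,\|w_{n,L}\|_{L^{\alpha^*_s}}^{2},
\qquad \alpha^*_s:=\frac{2\cdot 2^*_s}{2^*_s-(q-2)}\in(2,2^*_s).
\]
Since $\|\tilde u_n\|_{L^{2^*_s}}$ is uniformly bounded, this gives a clean recursion $\|\tilde u_n\|_{L^{\beta 2^*_s}}\le (C\beta^{2})^{1/(2\beta)}\|\tilde u_n\|_{L^{\beta\alpha^*_s}}$ with constants depending only on $\beta$, and the product $\prod_k(C\beta_k^{2})^{1/(2\beta_k)}$ obviously converges. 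No absorption step is needed, and only the uniform $H^s$-bound on $\tilde u_n$ is used---the strong convergence from Lemma~\ref{lem3.1N} is not.

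\textbf{Your absorption step, as written, has a gap.} By passing to the critical bound you are forced to choose $R=R_\beta$ so that $C\beta\bigl(\sup_n\int_{\{\tilde u_n>R_\beta\}}\tilde u_n^{2^*_s}\bigr)^{(2^*_s-2)/2^*_s}\le\tfrac12$. After absorption the iteration constant becomes $C_\beta\sim C\beta\,R_\beta^{\,2^*_s-2}$, and convergence of $\prod_k C_{\beta_k}^{1/(2\beta_k)}$ requires $\sum_k\beta_k^{-1}\log R_{\beta_k}<\infty$. Uniform integrability alone gives no rate on the tail, so $R_{\beta_k}$ could grow super-exponentially in $k$ and the product could diverge; ``$\beta_k$ grows geometrically'' does not help here. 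The standard fix is to perform the absorption \emph{only at the first step} (one fixed $R$ for $\beta_1=2^*_s/2$), obtaining $\tilde u_n\in L^{p}$ uniformly for some $p>2^*_s$; thereafter place $\tilde u_n^{2^*_s-2}$ in $L^{p/(2^*_s-2)}$ via H\"older, which puts $z^2$ in an $L^{r}$ with $2r<2^*_s$ and restores a clean iteration with controllable constants. Alternatively---and more simply in this problem---just use the subcritical bound $t^{q-1}$ from the start, as the paper does.
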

\begin{proof}
Firstly, we note that $\tilde{u}_{n}$ is a sub-solution to the following equation
\begin{equation}\label{subsol}
(-\Delta)^{s} \tilde{u}_{n}+V(\e_{n} x+\e_{n} \tilde{y}_{n}) \tilde{u}_{n}=\frac{1}{m_{0}} g(\e_{n} x+\e_{n} \tilde{y}_{n}, \tilde{u}_{n}) \mbox{ in } \R^{3}.
\end{equation}
For any $n\in \mathbb{N}$ and $L>0$, we define $v_{n, L}=\tilde{u}_{n} \tilde{u}_{n, L}^{2(\beta-1)}$ where $\tilde{u}_{n, L}=\min\{\tilde{u}_{n}, L\}$ and $\beta>1$ will be determined later. \\
Let $\phi(t)=\phi_{L, \beta}(t)=t t_{L}^{2(\beta-1)}$ and we observe that 
\begin{equation}\label{Gg}
h'(a-b)(\phi(a)-\phi(b))\geq |\Phi(a)-\Phi(b)|^{2} \mbox{ for any } a, b\in\R,
\end{equation}
where 
$$
h(t)= \frac{|t|^{2}}{2} \, \mbox{ and } \, \Phi(t)=\int_{0}^{t} (\phi'(\tau))^{\frac{1}{2}} d\tau. 
$$
Indeed, since $\phi$ is an increasing function, we can see that
$$
(a-b)(\phi(a)-\phi(b))\geq 0 \mbox{ for any } a, b\in \R.
$$ 
Now, fix $a, b\in \R$ such that $a>b$. From the definition of $\Phi$ and Jensen inequality, we get
\begin{align*}
h'(a-b)(\phi(a)-\phi(b))&=(a-b) (\phi(a)-\phi(b)) \\
&= (a-b) \int_{a}^{b} \phi'(t) dt \\
&= (a-b) \int_{a}^{b} (\Phi'(t))^{2} dt \\
&\geq \left(\int_{a}^{b} (\Phi'(t)) dt\right)^{2}.
\end{align*}
In similar fashion, we can prove that the above inequality is true for any $a\leq b$, so \eqref{Gg} holds. 
Taking $v_{n, L}(\geq 0)$ as test-function in the weak formulation of \eqref{subsol} and by using \eqref{Gg}, we can see that 
\begin{align}\label{BMS}
[\Gamma(\tilde{u}_{n})]^{2}
&\leq \iint_{\R^{6}} \frac{(\tilde{u}_{n}(x)- \tilde{u}_{n}(y))}{|x-y|^{3+2s}} ((\tilde{u}_{n} \tilde{u}_{n,L}^{2(\beta-1)})(x)-(\tilde{u}_{n} \tilde{u}_{n,L}^{2(\beta-1)})(y)) \,dx dy \nonumber\\
&\leq \frac{1}{m_{0}}\int_{\R^{3}} [g(\e_{n} x+\e_{n}\tilde{y}_{n}, \tilde{u}_{n})-m_{0} V(\e_{n} x+\e_{n} \tilde{y}_{n})]  \tilde{u}_{n} \tilde{u}_{n, L}^{2(\beta-1)} \,dx.
\end{align}
Since 
$$
\Gamma(\tilde{u}_{n})\geq \frac{1}{\beta} \tilde{u}_{n} \tilde{u}_{n, L}^{\beta-1},
$$
from the Sobolev inequality in Theorem \ref{Sembedding} we can deduce that 
$$
[\Gamma(\tilde{u}_{n})]^{2}\geq S^{-1}_{*} \|\Gamma(\tilde{u}_{n})\|^{2}_{L^{2^{*}_{s}}(\R^{3})}\geq \left(\frac{1}{\beta}\right)^{2} S^{-1}_{*}\|\tilde{u}_{n} \tilde{u}_{n, L}^{\beta-1}\|^{2}_{L^{2^{*}_{s}}(\R^{3})}.
$$
This together with \eqref{BMS}, $(M_1)$, $(g_1)$ and $(g_2)$ implies that
\begin{align*}
\left(\frac{1}{\beta}\right)^{2} S^{-1}_{*}\|\tilde{u}_{n} \tilde{u}_{n, L}^{\beta-1}\|^{2}_{L^{\bar{2}^{*}_{s}}(\R^{3})}&\leq \frac{1}{m_{0}}\int_{\R^{3}}   [g(\e_{n} x+\e_{n}\tilde{y}_{n}, \tilde{u}_{n}) -m_{0} V(\e_{n} x+\e_{n} \tilde{y}_{n})]  \tilde{u}_{n} \tilde{u}_{n, L}^{2(\beta-1)} \,dx \\
&\leq C \int_{\R^{3}}  \tilde{u}_{n}^{q} \tilde{u}_{n, L}^{2(\beta-1)} dx \\
&= C \int_{\R^{3}}  \tilde{u}_{n}^{q-2} (\tilde{u}_{n} \tilde{u}_{n, L}^{\beta-1})^{2} dx. 
\end{align*}
Now, we set $w_{n, L}:=\tilde{u}_{n} \tilde{u}_{n, L}^{\beta-1}$. Then, by using H\"older inequality, we deduce that
\begin{align*}
\|w_{n, L}\|_{L^{2^{*}_{s}}(\R^{3})}^{2}\leq C \beta^{2}  \left(\int_{\R^{3}} \tilde{u}_{n}^{2^{*}_{s}}\, dx\right)^{\frac{q-2}{2^{*}_{s}}} \left(\int_{\R^{3}} w_{n, L}^{\frac{2 2^{*}_{s}}{2^{*}_{s}-(q-2)}} \, dx\right)^{\frac{2^{*}_{s}-(q-2)}{2^{*}_{s}}}
\end{align*}
where $2<\frac{2 2^{*}_{s}}{2^{*}_{s}-(q-2)}<2^{*}_{s}$. 
Recalling that $\{\tilde{u}_{n}\}_{n\in \mathbb{N}}$ is bounded in $\mathcal{H}_{\e_{n}}$, we can see that
\begin{align}\label{conto4}
\|w_{n, L}\|_{L^{2^{*}_{s}}(\R^{3})}^{2}\leq C \beta^{2}  \|w_{n, L}\|_{L^{\alpha^{*}_{s}}(\R^{3})}^{2}
\end{align}
where 
$$
\alpha^{*}_{s}=\frac{2 2^{*}_{s}}{2^{*}_{s}-(q-2)}.
$$ 
We observe that if $\tilde{u}_{n}^{\beta}\in L^{\alpha^{*}_{s}}(\R^{3})$, from the definition of $w_{n, L}$, the fact that $\tilde{u}_{n, L}\leq \tilde{u}_{n}$, and  (\ref{conto4}), we deduce
\begin{align}\label{conto5}
\|w_{n, L}\|_{L^{2^{*}_{s}}(\R^{3})}^{2}\leq C \beta^{2} \left(\int_{\R^{3}} \tilde{u}_{n}^{\beta \alpha^{*}}\, dx\right)^{\frac{2}{\alpha^{*}_{s}}}<\infty.
\end{align}
By passing to the limit in (\ref{conto5}) as $L \rightarrow +\infty$, the Fatou's Lemma yields
\begin{align}\label{conto6}
\|\tilde{u}_{n}\|_{L^{\beta 2^{*}_{s}}(\R^{3})}\leq C^{\frac{1}{2\beta}} \beta^{\frac{1}{\beta}} \|\tilde{u}_{n}\|_{L^{\beta \alpha^{*}_{s}}(\R^{3})}
\end{align}
whenever $\tilde{u}_{n}^{\beta \alpha^{*}}\in L^{1}(\R^{3})$.\\
Now, we set $\beta:=\frac{2^{*}_{s}}{\alpha^{*}_{s}}>1$. Since $\tilde{u}_{n}\in L^{2^{*}_{s}}(\R^{3})$, the above inequality holds for this choice of $\beta$. Then, by using the fact that $\beta^{2}\alpha^{*}_{s}=\beta \,2^{*}_{s}$, it follows that \eqref{conto6} holds with $\beta$ replaced by $\beta^{2}$.
Therefore, we can see that
\begin{align*}
\|\tilde{u}_{n}\|_{L^{\beta^{2} 2^{*}_{s}}(\R^{3})}&\leq C^{\frac{1}{2\beta^{2}}} \beta^{\frac{2}{\beta^{2}}} \|\tilde{u}_{n}\|_{L^{\beta^{2} \alpha^{*}_{s}}(\R^{3})}\\
&\leq  C^{\frac{1}{2}\left(\frac{1}{\beta}+\frac{1}{\beta^{2}}\right)} \beta^{\frac{1}{\beta}+\frac{2}{\beta^{2}}} \|\tilde{u}_{n}\|_{L^{\beta \alpha^{*}}(\R^{3})}.
\end{align*}
Iterating this process, and recalling that $\beta\, \alpha^{*}:=2^{*}_{s}$, we can infer that for every $m\in \mathbb{N}$
\begin{align}\label{conto7}
\|\tilde{u}_{n}\|_{L^{\beta^{m} 2^{*}_{s}}(\R^{3})}\leq C^{\sum_{j=1}^{m}\frac{1}{2\beta^{j}}} \beta^{\sum_{j=1}^{m} j\beta^{-j}} \|\tilde{u}_{n}\|_{L^{2^{*}_{s}}(\R^{3})}.
\end{align}
Taking the limit in (\ref{conto7}) as $m \rightarrow +\infty$, we get
\begin{align*}
\|\tilde{u}_{n}\|_{L^{\infty}(\R^{3})}\leq C \|\tilde{u}_{n}\|_{L^{2^{*}_{s}}(\R^{3})}\leq C \mbox{ for any } n\in \mathbb{N}.
\end{align*}
\end{proof}

\noindent
Now, we are able to give the proof of our main result.
\begin{proof}[Proof of Theorem \ref{thm1}]
Take $\delta>0$ such that $\Lambda_\delta \subset \Omega$. We begin proving that there exists $\tilde{\e}_{\delta}>0$ such that for any $\e \in (0, \tilde{\e}_{\delta})$ and any solution $u_{\e} \in \widetilde{\mathcal{N}}_{\e}$ of \eqref{Pea}, it results 
\begin{equation}\label{infty}
\|u_{\e}\|_{L^{\infty}(\R^{3}\setminus \Omega_{\e})}<a. 
\end{equation}
Suppose by contradiction that for some subsequence $\{\e_{n}\}_{n\in \mathbb{N}}$ such that $\e_{n}\rightarrow 0$, we can find $u_{\e_{n}}\in \widetilde{\mathcal{N}}_{\e_{n}}$ such that $\mathcal{J}'_{\e_{n}}(u_{\e_{n}})=0$ and 
\begin{equation}\label{eee}
\|u_{\e_{n}}\|_{L^{\infty}(\R^{3}\setminus \Omega_{\e_{n}})}\geq a.
\end{equation} 
Since $\mathcal{J}_{\e_{n}}(u_{\e_{n}}) \leq c_{0} + h_{1}(\e_{n})$ and $h_{1}(\e_{n})\rightarrow 0$, we can proceed as in the first part of the proof of Lemma \ref{lem3.1N}, to deduce that $\mathcal{J}_{\e_{n}}(u_{\e_{n}})\rightarrow c_{0}$.
Then, by using Lemma \ref{lem3.1N}, we can find $\{\tilde{y}_{n}\}_{n\in \mathbb{N}}\subset \R^{3}$ such that $\tilde{u}_{n}=u_{\e_{n}}(\cdot+\tilde{y}_{n})\rightarrow \tilde{u}$ in $H^{s}(\R^{3})$ and $\e_{n}\tilde{y}_{n}\rightarrow y_{0} \in \Lambda$. \\
Now, if we choose $r>0$ such that $B_{r}(y_{0})\subset B_{2r}(y_{0})\subset \Omega$, we can see that $B_{\frac{r}{\e_{n}}}(\frac{y_{0}}{\e_{n}})\subset \Omega_{\e_{n}}$. In particular, for any $y\in B_{\frac{r}{\e_{n}}}(\tilde{y}_{n})$ it holds
\begin{equation*}
\left|y - \frac{y_{0}}{\e_{n}}\right| \leq |y- \tilde{y}_{n}|+ \left|\tilde{y}_{n} - \frac{y_{0}}{\e_{n}}\right|<\frac{1}{\e_{n}}(r+o_{n}(1))<\frac{2r}{\e_{n}}\, \mbox{ for } n \mbox{ sufficiently large. }
\end{equation*}
Therefore 
\begin{equation}\label{ern}
\R^{3}\setminus \Omega_{\e_{n}}\subset \R^{3} \setminus B_{\frac{r}{\e_{n}}}(\tilde{y}_{n})
\end{equation}
for any $n$ big enough.\\ 
Now, we observe that $\tilde{u}_{n}$ is a solution to
\begin{align*}
(-\Delta)^{s} \tilde{u}_{n}+\tilde{u}_{n}=\xi_{n} \mbox{ in } \R^{3},
\end{align*}
where 
$$
\xi_{n}(x):=\frac{1}{M\left(\int_{\R^{3}} |(-\Delta)^{\frac{s}{2}}\tilde{u}_{n}|^{2}+V_{n}(x) \tilde{u}^{2}_{n} \, dx\right)} g(\e_{n} x+\e_{n} \tilde{y}_{n}, \tilde{u}_{n})-V_{n}(x) \tilde{u}_{n}+\tilde{u}_{n}
$$
and
$$
V_{n}(x):=V(\e_{n} x+\e_{n} \tilde{y}_{n}).
$$
Put $\xi(x):=\frac{1}{M(\|\tilde{u}\|_{0}^{2})}f(\tilde{u})-V(y_{0})\tilde{u}+\tilde{u}$. By using Lemma \ref{Moser}, the interpolation in the $L^{p}$ spaces, $\tilde{u}_{n}\rightarrow \tilde{u}$ in $H^{s}(\R^{3})$, the assumptions $(g_1)$, $(g_3)$ and the continuity of $M$, we can see that 
$$
\xi_{n}\rightarrow \xi \mbox{ in } L^{p}(\R^{3}) \quad \forall p\in [2, \infty), 
$$
so, there exists $C>0$ such that 
$$
\|\xi_{n}\|_{L^{\infty}(\R^{3})}\leq C \mbox{ for any } n\in \mathbb{N}.
$$
Hence $\tilde{u}_{n}(x)=(\mathcal{K}*\xi_{n})(x)=\int_{\R^{3}} \mathcal{K}(x-z) \xi_{n}(z) \,dz$, where 
$\mathcal{K}$  is the Bessel kernel and satisfies the following properties \cite{FQT}:
\begin{compactenum}[$(i)$]
\item $\mathcal{K}$ is positive, radially symmetric and smooth in $\R^{3}\setminus \{0\}$,
\item there is $C>0$ such that $\displaystyle{\mathcal{K}(x)\leq \frac{C}{|x|^{3+2s}}}$ for any $x\in \R^{3}\setminus \{0\}$,
\item $\mathcal{K}\in L^{r}(\R^{3})$ for any $r\in [1, \frac{3}{3-2s})$.
\end{compactenum} 
Then, arguing as in Lemma $2.6$ in \cite{AM}, we can see that 
$$
\tilde{u}_{n}(x)\rightarrow 0 \mbox{ as } |x|\rightarrow \infty
$$
uniformly in $n\in \mathbb{N}$.
Therefore, there exists $R>0$ such that 
$$
\tilde{u}_{n}(x)<a \, \mbox{ for } \, |x|\geq R, n\in \mathbb{N}.
$$ 
Hence $u_{\e_{n}}(x)<a$ for any $x\in \R^{3}\setminus B_{R}(\tilde{y}_{n})$ and $n\in \mathbb{N}$. This fact and \eqref{ern}, show that there exists $\nu \in \mathbb{N}$ such that for any $n\geq \nu$ and $r/\e_{n}>R$ we have
$$
\R^{3}\setminus \Lambda_{\e_{n}}\subset \R^{3} \setminus B_{\frac{r}{\e_{n}}}(\tilde{y}_{n})\subset \R^{3}\setminus B_{R}(\tilde{y}_{n}),
$$
which implies that $u_{\e_{n}}(x)<a$ for any $x\in \R^{3}\setminus \Omega_{\e_{n}}$ and $n\geq \nu$. This gives a contradiction because of \eqref{eee}. \\
Let $\bar{\e}_{\delta}>0$ given by Theorem \ref{thm3.2}, and we fix $\e \in (0, \e_{\delta})$ where $\e_{\delta}= \min \{\tilde{\e}_{\delta}, \bar{\e}_{\delta}\}$. 
In view of Theorem \ref{thm3.2}, we know that the problem \eqref{Pea} admits at least $cat_{\Lambda_{\delta}}(\Lambda)$ nontrivial solutions. Let us denote by $u_{\e}$ one of these solutions. Since $u_{\e}\in \widetilde{\mathcal{N}}_{\e}$ satisfies \eqref{infty}, from the definition of $g$ it follows that $u_{\e}$ is a solution of \eqref{Pe}. Then $\hat{u}(x)=u(x/\e)$ is a solution to \eqref{P}, and we can conclude that \eqref{P} has at least $cat_{\Lambda_{\delta}}(\Lambda)$ solutions.\\ 
Finally, we study the behavior of the maximum points of solutions to the problem \eqref{Pe}. Take $\e_{n}\rightarrow 0$ and consider a sequence $\{u_{n}\}_{n\in \mathbb{N}}\subset \mathcal{H}_{\e_{n}}$ of solutions to \eqref{Pe}.
Let us observe that $(g_1)$ implies that we can find $\gamma>0$ such that 
\begin{equation}\label{4.4FS}
g(\e x, t)t\leq \frac{V_{0}}{K} t^{2} \mbox{ for any } x\in \R^{3}, t\leq \gamma.
\end{equation}
Arguing as before, we can find $R>0$ such that 
\begin{equation}\label{4.5FS}
\|u_{n}\|_{L^{\infty}(B^{c}_{R}(\tilde{y}_{n}))}<\gamma.
\end{equation}
Moreover, up to extract a subsequence, we may assume that 
\begin{equation}\label{4.6FS}
\|u_{n}\|_{L^{\infty}(B_{R}(\tilde{y}_{n}))}\geq \gamma.
\end{equation}
Indeed, if \eqref{4.6FS} does not hold, in view of \eqref{4.5FS} we can see that $\|u_{n}\|_{L^{\infty}(\R^{3})}<\gamma$. Then, by using $\langle \mathcal{J}'_{\e_{n}}(u_{n}), u_{n}\rangle=0$ and \eqref{4.4FS} we can infer
\begin{equation*}
m_{0} \|u_{n}\|_{\e_{n}}^{2}\leq \int_{\R^{3}} g(\e_{n} x, u_{n}) u_{n} \,dx\leq \frac{V_{0}}{K} \int_{\R^{3}} u_{n}^{2} \, dx
\end{equation*}
which yields $\|u_{n}\|_{\e_{n}}=0$, and this is impossible.
As a consequence, \eqref{4.6FS} holds. Taking into account \eqref{4.5FS} and \eqref{4.6FS} we can deduce that the maximum points $p_{n}\in \R^{3}$ of $u_{n}$ belong to $B_{R}(\tilde{y}_{n})$. Therefore, $p_{n}=\tilde{y}_{n}+q_{n}$ for some $q_{n}\in B_{R}(0)$. Hence, $\eta_{n}=\e_{n} \tilde{y}_{n}+\e_{n} q_{n}$ is the maximum point of $\hat{u}_{n}(x)=u_{n}(x/\e_{n})$. Since $|q_{n}|<R$ for any  $n\in \mathbb{N}$ and $\e_{n} \tilde{y}_{n}\rightarrow y_{0}\in \Lambda$ (in view of Lemma \ref{lem3.1N}), from the continuity of $V$ we can infer that
$$
\lim_{n\rightarrow \infty} V(\eta_{\e_{n}})=V(y_{0})=V_{0},
$$
which ends the proof of the Theorem.
\end{proof}

\smallskip

\noindent
{\bf Acknowledgements.} 
The authors warmly thank the anonymous referee for her/his useful and nice comments on the paper. 
The manuscript has been carried out under the auspices of the INDAM - Gnampa Project 2017 titled:{\it Teoria e modelli per problemi non locali}.

\addcontentsline{toc}{section}{\refname}

\end{document}